\theoremstyle{definition} 
\newtheorem{thm}{Theorem}[section]      
\newtheorem{definition}{Definition}[section]
\newtheorem{prop}{Proposition}[section]
\newtheorem{lemma}{Lemma}[section]
\newtheorem{corollary}{Corollary}[section]
\newtheorem{problem}{Question}[section]
\newtheorem{remark}{Remark}[section]
\newtheorem{example}{Example}[section]
\newcommand{\E}{{\mathfrak E}}
\DeclareMathOperator{\trop}{trop}
\newcommand{\ZZ}{{\mathbb Z}}
\newcommand{\PP}{{\mathbb P}}
\newcommand{\KK}{{\mathbb K}}
\newcommand{\CC}{{\mathbb C}}
\newcommand{\RR}{{\mathbb R}}
\newcommand{\QQ}{{\mathbb Q}}
\newcommand{\TT}{{\mathbb T}}
\newcommand{\A}{{\mathcal A}}
\newcommand{\Log}{{\text{Log}}}
\newcommand{\val}{\mathrm{val}}
\newcommand{\Val}{\mathrm{Val}}
\newcommand{\e}{\varepsilon}
\newcommand{\Trop}{\mathrm{Trop}}
\newcommand{\ord}{\mathrm{ord}}
\begin{document}
\title{A guide to tropical modifications}
\author{Nikita Kalinin
\footnote{Guangdong Technion Israel Institute of Technology (GTIIT),
241 Daxue Road, Shantou, Guangdong Province 515603, P.R. China,  Technion-Israel Institute of Technology, Haifa, 32000, Haifa district, Israel, nikaanspb@gmail.com}}

\maketitle
\medskip
\medskip
\medskip

\begin{abstract}
This paper surveys {\it tropical modifications}, a notion that has become part of the folklore of tropical geometry. Tropical modifications are used in tropical intersection theory, tropical Hodge theory, and the study of singularities. They admit interpretations in several contexts, including hyperbolic geometry, Berkovich spaces, and non-standard analysis.

The goal of the paper is to present several points of view, give references, and illustrate the usefulness of tropical modifications. We assume that the reader has already encountered tropical modifications and wants to understand them better.

The paper also contains two new contributions: an obstruction to the realizability of non-transversal intersections and a tropical version of Weil's reciprocity law.

\smallskip
\noindent{\bf 2020 Mathematics Subject Classification.} 14T10, 14T20, 14T90, 14B10, 05C99.

\smallskip
\noindent{\bf Keywords.} Tropical geometry, Weil reciprocity, tropical modifications, singularities, Hodge theory.
\end{abstract}

\begin{flushright}
{\small\bfseries Substance is by nature prior to its modifications.}\\
{\small\bfseries ... nothing is granted in addition to the understanding,}\\
{\small\bfseries except substance and its modifications.}\\
{\small\itshape Ethics, Benedictus de Spinoza.}
\end{flushright}

\medskip
\medskip
\medskip

This paper surveys {\it tropical modifications}, a notion that has become part of the folklore of tropical geometry. Tropical modifications are used in tropical intersection theory, tropical Hodge theory, and the study of singularities. They admit interpretations in several contexts, including hyperbolic geometry, Berkovich spaces, and non-standard analysis.
     
One cannot say better than in \cite{brugalle}: ``{\it Tropical modifications ... can be seen as a refinement of the tropicalization process, and allows one to recover some information ... sensitive to higher order terms.}'' Tropical modifications were defined by Mikhalkin in \cite{mikh2}.

The goal of the paper is to present several points of view, give references, and illustrate the usefulness of tropical modifications. We assume that the reader has already encountered tropical modifications and wants to understand them better.

The paper also contains two new contributions: an obstruction to the realizability of non-transversal intersections, Theorem~\ref{th_subordinate}, and a tropical version of Weil's reciprocity law, Theorem~\ref{th_tropicalweil}. A generalization of tropical momentum is given in Section~\ref{sec_digression}. 

One of the main results is the subordination obstruction, Theorem~\ref{th_subordinate}:
if two algebraic varieties lift a prescribed tropical intersection, then the
valuation of their actual intersection divisor is not an arbitrary divisor
rationally equivalent to the stable intersection. It must be subordinate to the
stable-intersection divisor, meaning that it is cut out by a tropical function
lying below the stable-intersection function and agreeing with it on the frozen
locus. 

As a preliminary introduction to tropical geometry, see
\cite{BIMS}, \cite{2013arXiv1311.2360B} and \cite{mikhalkin2006tropical}, where
tropical modifications are also discussed. We are glad to mention other texts promoting
modifications from different perspectives: 

{\bf singularities and tangency}: \cite{brugalle} (examples, construction of curves with inflection points), \cite{2014arXiv1409.7430A} (repairing the $j$-invariant of elliptic
curves), \cite{cueto2018tropical} (tropical genus two curves from several perspectives), \cite{MarkwigRistauSchleis2023} (hyperelliptic curves) \cite{hahn2018tropicalized} (tropical quartic curves), \cite{cueto2023combinatorics} (tropical bitangent lines to a planar quartic), \cite{ganor,ganor2022enumeration} (enumeration of cuspidal curves), \cite{len2020lifting2} (lifting tropical bitangents), \cite{kalinin,mythesis, nagatakalinin} (multiplicity of tropical singular points),

{\bf realizability of intersections}:  \cite{Shaw:2015qd}  (intersection theory on tropical surfaces),  \cite{coppens2016metric} (lifting divisors, constructing curves without maps to an elliptic curve), \cite{len2020lifting} (studying tropical self-intersections),  \cite{mundinger2018image} (realizability of tropical linear subspaces),  \cite{yamamoto2018periods} (tropical $K3$ surfaces),  \cite{renaudineau2015constructions} (tropical constructions of real reducible curve in the Hirzebruch surfaces and other real algebraic varieties), \cite{bertrand2017planar} (constructing tropical curves of big genus in $\TT^n$ and other questions about realizability of big Betti numbers), \cite{he2019generalization} (lifting non-proper tropical intersections),

{\bf maps between tropical curves}: \cite{del2016tropical} (presenting metric graphs as tropical planar curves), \cite{kageyama2018divisorial,dervodeli2023construction} (gonality of tropical curves), \cite{ulirsch2019tropical}(a tropical analogue of the double ramification locus), \cite{draisma2021catalan} (maps to a tree from a modification of a genus $g$ curve), \cite{MeloZheng2026,Zakharov2025TrigonalSecondMoment} (trigonal curves),

{\bf tropical cohomology theory, Hodge theory}: \cite{jell2017poincare} (tropical Dolbeault cohomology), \cite{de2020chern} (Chern--Schwartz--MacPherson cycles of matroids), \cite{amini2021homology} (homology of tropical fans), \cite{amini2020hodge} (Hodge theory for tropical varieties).

This paper is organized as follows. Section~\ref{sec_moddefinition} defines tropical modifications via multivalued operations and discusses basic examples. We then prove several structural theorems and discuss applications. Section~\ref{sec_modmotivation} gives intuitive interpretations of tropical modifications; a reader seeking motivation may start there and then return to Section~\ref{sec_moddefinition}. The paper concludes with several questions where tropical modifications may be useful.

\subsection*{Acknowledgements}

I want to thank K. Shaw, E. Brugall\'e, and G. Mikhalkin, who taught me the concept of tropical modification, and everyone else who commented on this text.

\tableofcontents

\newpage
\section{Definitions and examples}

\subsection{Tropical modification via the graphs of multivalued functions}
\label{sec_moddefinition}

Recall that the tropical semiring $\TT$ is defined as
\[
\TT=\RR\cup\{-\infty\}, \qquad \TT^*=\TT\setminus\{-\infty\}=\RR.
\]
We extend addition from $\RR$ to $\TT$ by the rule $-\infty+A=-\infty$ for all $A\in\TT$. We extend the order relation from $\RR$ to $\TT$ by the rule $-\infty<A$ for all $A\in\RR$. A convenient way to define tropical modifications is through the multivalued tropical addition introduced by Viro in \cite{viro}.

\begin{definition} The set $\TT$ is equipped with tropical multiplication (usual addition on $\RR$), and tropical addition, defined as follows: 
\begin{itemize}  
\item the result of tropical multiplication of $A$ and $B$ is $A+B$, 
\item the result of tropical addition of $A$ and $B$ is $\max (A,B)$ if $A\ne B$, and 
\item the result of tropical addition of $A$ and $A$ is $\{x\in\TT\mid x\leq A\}$.
\end{itemize}  

We may say, equivalently, that the operation $\max$ is redefined to be multivalued in the case of equal arguments, i.e., $\max(A,A)=\{X\in\TT\mid X\leq A\}$.
\end{definition}

\begin{definition}
On the dense torus $\RR^n=(\TT^*)^n$, a tropical Laurent monomial is a function
\[
X\mapsto A+i_1X_1+\cdots+i_nX_n,\qquad A\in\RR,\quad (i_1,\dots,i_n)\in\ZZ^n.
\]
If we work on all of $\TT^n$, we restrict to ordinary tropical monomials with
\((i_1,\dots,i_n)\in\ZZ_{\ge 0}^n\). This is the tropical analogue of the ordinary monomial $aX_1^{i_1}X_2^{i_2}\cdots X_n^{i_n}$. A tropical polynomial $f$ is a {\it tropical} sum of a finite number of tropical monomials,

\begin{equation}
\label{eq_poly}
f(X_1,\dots X_n) = \max_{(i_1,\dots,i_n)\in I} (A_{i_1\dots i_n}+i_1X_1+i_2X_2+\dots+i_nX_n),
\end{equation}

where $I$ is a finite subset of $\ZZ^n$, $A_{i_1\dots i_n}\in \TT^*$, and the operation $\max$ is multivalued. 

With the above multivalued convention, a point \(X\) belongs to the zero set of \(f\) if
\(-\infty\in f(X)\). Equivalently, in the usual single-valued notation, \(X\) is a point
where the maximum in \eqref{eq_poly} is attained by at least two monomials. We say that $f$ {\it vanishes} at $X$. A tropical hypersurface (as a set) is the zero set of a tropical polynomial on $\TT^n$.
\end{definition}

\begin{remark}
Note that the zero set of $F$ is the set where the maximum in the right-hand side of \eqref{eq_poly} is attained at least twice; this coincides with the standard definition of a tropical hypersurface \cite{BIMS}. We assign weights to the maximal-dimensional faces of the tropical hypersurface. On a codimension-one face where exactly two monomials with exponent vectors
\(u,v\in\ZZ^n\) dominate, the weight is the lattice length of \(u-v\), i.e.
\[
\gcd(u_1-v_1,\dots,u_n-v_n).
\]
For non-generic faces this is understood by refinement, or equivalently by the usual lattice-index definition. Refer to \cite{mikh2} for details.

\end{remark}

\begin{definition}[Tropical modification]
\label{def_modification}
Let $N$ be a tropical hypersurface in $\TT^n$, i.e., the zero set of a tropical polynomial $f$ on $\TT^n$. The {\it modification} of $\TT^n$ {\it along} $N$ is the set 
\begin{equation}
m_N(\TT^n)=\{(X,Y)\in\TT^n\times \TT | Y\in f(X)\},
\end{equation}
 i.e., the graph of the multivalued function $f$. For a given tropical variety $K\subset \TT^n$, a tropical subvariety $K'\subset m_N(\TT^n)$ is called {\bf a} modification of $K$ if the natural projection $p:\TT^n\times \TT\to\TT^n$ restricted to $K'$ is a tropical morphism $p: K'\to K$ of degree one. We write $K'=m_N(K)$ in this case.
\end{definition}

\begin{prop}[cf. \cite{mikh2}, 1.5 B,C]
The set $m_N(\TT^n)$ coincides with the zero set of the tropical polynomial $$f'(X_1,\dots, X_n,Y) = \max(f(X_1,\dots,X_n),Y): \TT^n\times\TT \to \TT.$$
\end{prop}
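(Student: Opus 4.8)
The plan is to unfold both descriptions of the set and compare them point by point. Write $f$ as a tropical sum of monomials, $f=\bigoplus_{j=1}^k\big(A_j+\langle i_j,X\rangle\big)$ with $A_j\in\TT$, $i_j\in\ZZ^n$, so that $N$ is the zero set of $f$. The first step is to recognize that the expression $f(X)+_{\trop}Y$ is nothing but the tropical polynomial $g$ on $\TT^n\times\TT$ obtained from $f$ by adjoining one further monomial, namely $Y$ (coefficient $0$, exponent $(0,\dots,0,1)$); then "$m_N(\TT^n)$ is the zero set of $g$" becomes a statement between two honest subsets of $\TT^{n+1}$. To justify reading $f(X)+_{\trop}Y$ this way even though $f(X)$ is multivalued, one only needs the identity $\big(\{x\le M\}\big)+_{\trop}Y=\{x\le\max(M,Y)\}$, the left side being computed by extending $+_{\trop}$ to subsets as in the Remark; this is an immediate case check on the three-line definition of $+_{\trop}$, and I would carry it out first.

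Next I would record the membership criterion forced by the definition of a tropical hypersurface together with the Remark: for any tropical polynomial $h=\bigoplus_\ell\big(c_\ell+\langle a_\ell,Z\rangle\big)$, evaluation at a point $Z$ yields the single real number $\max_\ell\big(c_\ell+\langle a_\ell,Z\rangle\big)$ when this maximum is attained by exactly one index, and the downward ray $\{x\le\max_\ell(c_\ell+\langle a_\ell,Z\rangle)\}$ when it is attained by at least two indices (with the convention that several terms all equal to $-\infty$ also counts as the latter). Since a single real value never equals $-\infty$ whereas the ray always contains $-\infty$, we obtain: $Z$ lies in the zero set of $h$ if and only if the maximum among the $c_\ell+\langle a_\ell,Z\rangle$ is attained at least twice.

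The comparison then splits into two cases at a point $X\in\TT^n$; put $M:=\max_j\big(A_j+\langle i_j,X\rangle\big)$. If $M$ is attained by a single index $j$, then $f(X)=\{M\}$, so $(X,Y)\in m_N(\TT^n)$ iff $Y=M$; applying the criterion to $g$ over the $k+1$ terms $\{A_j+\langle i_j,X\rangle\}_j\cup\{Y\}$, the maximum is attained at least twice exactly when $Y=M$, so $(X,Y)$ lies in the zero set of $g$ iff $Y=M$ as well. If $M$ is attained by at least two indices $j$ (which covers the degenerate case where all $A_j+\langle i_j,X\rangle$ equal $-\infty$), then $f(X)=\{x\le M\}$, so $(X,Y)\in m_N(\TT^n)$ iff $Y\le M$; and on the other side, for $Y\le M$ the maximum over the $k+1$ terms is still $M$ and still attained at least twice, whereas for $Y>M$ it is $Y$ attained uniquely, so $(X,Y)$ lies in the zero set of $g$ iff $Y\le M$. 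Since the two conditions coincide in both cases, the two sets are equal.

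I expect the only genuinely delicate point to be the very first step — being sure that $f(X)+_{\trop}Y$ legitimately denotes "$f$ with one extra monomial" even though $f(X)$ is multivalued — together with a bit of care at the boundary of $\TT^n$ where some coordinates of $X$ are $-\infty$. Once the membership criterion above is in place, the case analysis itself is entirely mechanical.
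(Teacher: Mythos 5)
The paper itself gives no proof of this proposition (it is stated with only the citation to Mikhalkin), so the direct unwinding of the two definitions that you propose is the natural argument, and your case analysis --- maximum of the monomials of $f$ at $X$ attained once versus at least twice, matched on one side against the condition $Y\in f(X)$ and on the other against the ``maximum attained at least twice'' criterion for the zero set of $g$ --- is correct and complete, including the $-\infty$ degeneracies.

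The one thing you must repair is the auxiliary identity in your first step: $\bigl(\{x\le M\}\bigr)+_{\trop}Y=\{x\le\max(M,Y)\}$ is \emph{false} when $Y>M$. Extending $+_{\trop}$ to subsets elementwise, every $x\le M$ satisfies $x<Y$, hence $x+_{\trop}Y=\max(x,Y)=Y$, and the left-hand side is the singleton $\{Y\}$, not the ray $\{x\le Y\}$. This is not cosmetic: the two sets differ exactly in whether they contain $-\infty$, so feeding this identity into your membership test would wrongly place every $(X,Y)$ of your second case with $Y>M$ into the zero set. Fortunately your case analysis never actually invokes the identity --- there you apply the ``attained at least twice'' criterion to $g$ directly, which yields $\{Y\}$ for $Y>M$ and correctly excludes the point --- so the error is quarantined. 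The correct statement to put in its place is simply that the multivalued tropical sum of a finite multiset of values in $\TT$ depends only on the multiset: it equals $\{x\le M\}$ if the maximum $M$ is attained at least twice and $\{M\}$ otherwise, as one checks by induction on elementwise binary addition. That is what guarantees that evaluating $f$ at $X$ first and then adding $Y$ agrees with evaluating, in one step, the polynomial $g$ obtained by adjoining the monomial $Y$. With that substitution the proof is sound.
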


\begin{definition}
For an abstract tropical variety $M$ and its subvariety $N\subset M$ defined as the zero set of a tropical function $f: M\to\TT$, we define the tropical modification $m_N(M)$ of $M$ along $N$ as the graph of $f$ in $M\times\TT$. A subvariety $K'\subset m_N(M)$ is called a modification $m_N(K)$ of $K$ along $N$ if the natural projection $K'\to K$ is a tropical morphism of degree one. 
\end{definition}

Modifications along non-principal divisors, which can be locally presented as zero sets, can be defined locally and then glued to a global abstract tropical variety in a natural way.

\subsection{Modification reveals intersections}
Consider two algebraic curves $C_1,C_2\subset (\CC^*)^2$ defined by equations $$F_1(x,y)=0,F_2(x,y)=0,$$ respectively. Let us build the map 

\begin{equation}
\label{eq_modc}
m_{C_2} :
(x,y)\to \big(x,y,F_2(x,y)\big)\in (\CC^*)^2\times \CC.
\end{equation}
 The set $m_{C_2}\big((\CC^*)^2\big)$ is the graph of $F_2$, namely $z=F_2(x,y)$.  The
intersection $C_1\cap C_2$ can be easily recovered as
$m_{C_2}(C_1)\cap \{(x,y,0)\}$. For the complex curves, this does not seem very interesting. Still, during the tropicalization process, the
plane $(x,y,0)$ goes to the plane $(X, Y,-\infty)=\{Z=-\infty\}$, and the intersection of tropical curves will be represented by certain rays going to minus infinity by $Z$ coordinate, see examples later.

\subsection{Tropical modifications as limits of amoebas} 
Look now at what happens in the limiting procedure.  Recall that a tropical curve $C\subset \TT^2$ is the tropical limit of a family 
$C_{t}\subset(\CC^*)^2, t\in \RR$ of plane algebraic curves if in the Gromov-Hausdorff sense, we have $$C = \lim_{t\to\infty}\Log_t(C_{t})$$ where we apply $\Log_t:\CC\to\TT$
coordinate-wise, i.e. in the two dimensional case $$\Log_t(C_t)=\{(\log_t|x|,\log_t|y|)|(x,y)\in C_t\}.$$ 

Let $F_{t}$ be the equation of $C_{t}$ in $(\CC^*)^2$.

\begin{prop}
Let \(F_t\) be a family of Laurent polynomials whose tropical limit is the tropical
polynomial \(F\) defining \(C\), and assume that no cancellation changes this
tropicalization. Then the tropical limit of the graphs
\[
S_t=\{(x,y,F_t(x,y))\mid (x,y)\in(\CC^*)^2\}
\subset (\CC^*)^2\times \CC
\]
is the tropical modification \(m_C\TT^2\).
\end{prop}

Near a smooth point of \(C_t\), the function \(F_t\) can be used as a local
coordinate transverse to \(C_t\). Hence the logarithm of \(F_t\) realizes
arbitrarily negative values, producing the vertical part of the tropical
modification. Therefore, after taking the logarithm near $C$, the function $\log_t |F_{t}(x,y)|$ will assume all the values of a neighborhood of $-\infty$. 

In other words, for a general point $(X,Y)\in\TT^2$ we have that $m_C(X,Y)=(X,Y,Z)$ where $Z$ is computed as $Z=\lim \log_t|F_t(x_t,y_t)|$ and $(x_t,y_t)$ are chosen such that $X=\lim\log_t|x_t|,Y=\lim\log_t|y_t|$. But if $(X, Y)\in C$, then $(x_t,y_t)$ can be chosen to be close to $C_t$, and thus $Z$ can assume any value from the interval $F(X, Y)\subset \TT$.

Use notation \eqref{eq_modc}.

\begin{prop}
Assume that \(C_{1,t}\) and \(C_{2,t}\) have tropical limits \(C_1\) and \(C_2\), that
\(C_{1,t}\) is not contained in \(C_{2,t}\), and that the projection of the limiting
graph to \(C_1\) has degree one. Then the tropical limit of
\[
m_{C_{2,t}}(C_{1,t})
\]
is a tropical modification of \(C_1\) along \(C_2\).
\end{prop}

Note that $m_{C_2}\TT^2$ depends only on $C_2$. Quite the contrary, for given tropical curves $C_1,C_2$ we {\bf can} construct different
families $C_{1,t},C_{2,t}$ and the limit $\lim_{t\to\infty}\Log_tm_{C_{2,t}}(C_{1,t})$ can be different, see numerous examples below.

\subsection{Modification and non-archimedean valuation}  
We always suppose that an algebraic hypersurface comes with a defining equation. Instead of taking the limit of amoebas we can consider non-Archimedean amoebas of the varieties defined over the valuation fields.

\begin{definition}
Let \(M'\subset(\KK^*)^n\) be a variety over a valued field \(\KK\). Let
\(N'\subset(\KK^*)^n\) be a hypersurface defined by \(f=0\). Consider the graph
\[
\Gamma_f(M')=\{(x,z)\in(\KK^*)^n\times\KK\mid x\in M',\ z=f(x)\}.
\]
The modification \(m_NM\) of \(M=\Trop(M')\) along \(N=\Trop(N')\) is
\[
\Val(\Gamma_f(M'))\subset\TT^{n+1}.
\]
\end{definition}
 
The approach with limits of amoebas gives the same results as the approach with non-Archimedean amoebas.

\begin{prop}
Consider a tropical variety $M\subset\TT^n$ and a tropical hypersurface $N$ defined by a tropical polynomial $F$. Let $\KK$ be the field of power series in $t$, converging for $t$ in a neighborhood of $0\in \CC$, and $\val:\KK^*\to \RR$ be its natural valuation (we use the convention
\[
\val\left(\sum_q a_qt^q\right)=-\min\{q:a_q\ne0\},
\]
so that \(\val(a+b)\le \max(\val(a),\val(b))\) and
\(\val(t+2t^2)=-1\)). Suppose that $$f\in \KK[x_1,\dots,x_n], f=\sum_{I\in \A} a_Ix^I \text{\ and } F = \max_{I\in\A} (\val(a_I)+I\cdot X):\TT^n\to \TT$$ where $I=(i_1,\dots,i_n)\in \ZZ_{\geq 0}^n$ are multi-indices. Let $M'\subset (\KK^*)^n$ be an affine algebraic variety, and its non-Archimedean amoeba $\Val(M')$ be $M$. For a small (by module) complex number $\e$, we can substitute $t$ as $\e$. Using this substitution we define $M'_\e\subset \CC^n$ and $f_\e\in\CC[x_1,\dots, x_n]$. Then, the three following objects coincide:
\begin{itemize}
\item the limit \[
\lim_{\varepsilon\to0}
\Log_\varepsilon\bigl(\{(x,f_\varepsilon(x))\mid x\in(\CC^*)^n\}\bigr),
\]
\item non-Archimedean amoeba $\Val(\{(x,f(x))|x\in(\KK^*)^n\})\subset \TT^{n+1}$ of the graph of $f$.
\item the tropical modification $m_N(\TT^n)$.
\end{itemize}
Additionally, two following objects coincide and equal to {\bf a} tropical modification $m_N(M)$:
\begin{itemize}
\item the limit $\lim_{\e\to 0}\Log_\e(\{x,f_\e(x)|x\in M'_\e\})$,
\item  the non-Archimedean amoeba
\[
\Val\bigl(\{(x,f(x))\mid x\in M'\}\bigr)\subset\TT^{n+1}
\]
of the graph of \(f\) on \(M'\).
\end{itemize}

\end{prop}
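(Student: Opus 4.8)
The plan is to read the proposition as an assembly of three known facts, generalizing the two Propositions stated just above to arbitrary $n$ and to the amoeba formulation; I would organize it in three steps.

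\emph{Step 1: limits versus non-Archimedean amoebas.} For any subvariety $V'\subset(\KK^*)^N$ one has $\Val(V')=\lim_{\e\to 0}\Log_\e(V'_\e)$, the limit taken in the Hausdorff sense on compact sets; this is the classical comparison between the complex amoebas of the fibres $V'_\e$ and the non-Archimedean amoeba of $V'$ (see \cite{mikhalkin2006tropical}). Here I would take $\KK$ large enough --- e.g. convergent Puiseux series, which is algebraically closed with value group dense in $\RR$ --- so that this comparison and Kapranov's theorem apply verbatim. Applying it once to $V'$ the graph of $f$ over $(\KK^*)^n$ and once to $V'=\{(x,f(x)):x\in M'\}$ (restricting to the torus and taking closures in $\TT^{n+1}$, which changes nothing) gives the coincidence of the first two bullets of each list. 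It then remains to identify these amoebas with $m_N(\TT^n)$ and with a modification $m_N(M)$ respectively.

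\emph{Step 2: the amoeba of the graph is $m_N(\TT^n)$.} The graph of $f$ is the hypersurface $V(g)$ with $g(x,y)=y-f(x)=1\cdot y-\sum_{I\in\A}a_Ix^I$. Since the valuation ignores signs and $\val(1)=0$, its tropicalization is $\trop(g)=\max\bigl(Y,\ \max_{I\in\A}(\val(a_I)+I\cdot X)\bigr)=Y+_{\trop}F(X)$. By Kapranov's theorem (the fundamental theorem of tropical geometry for hypersurfaces; cf. \cite{mikh2}, 1.5) the non-Archimedean amoeba of $V(g)$ is the tropical hypersurface defined by $\trop(g)$, i.e. the zero set of $F(X)+_{\trop}Y$; and by the Proposition stated just before this one (\cite{mikh2}, 1.5 B,C) that zero set is exactly $m_N(\TT^n)$. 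Together with Step~1 this proves the three coincidences of the first list.

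\emph{Step 3: the $M$-case gives ``a'' modification of $M$.} Write $\Gamma=\{(x,f(x)):x\in M'\}$. The coordinate projection $p\colon(x,y)\mapsto x$ restricts to an isomorphism of algebraic varieties from $\Gamma$ onto a dense open subset of $M'$; in particular $p|_\Gamma$ has degree one. Hence $\Val(\Gamma)$ is a balanced polyhedral complex of pure dimension $\dim M$ (structure theorem for tropicalizations), the tropicalization $\bar p\colon\TT^{n+1}\to\TT^n$ of $p$ restricts to a surjection $\Val(\Gamma)\to\Val(M')=M$, and the pushforward of the tropical cycle $\Val(\Gamma)$ under $\bar p$ equals $M$, so $\bar p|_{\Val(\Gamma)}$ has degree one. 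By Definition~\ref{def_modifiction}, $\Val(\Gamma)$ is therefore a modification $m_N(M)$ of $M$. I would stress that the article must be ``a'', not ``the'': $\Val(\Gamma)$ depends on the chosen lift $M'$ of $M$, which is precisely the flexibility exhibited by the examples below. Essentially everything here is cited; the two points needing care are (i) matching the algebraic equation $y=f(x)$ of the graph with the multivalued-tropical description of $m_N(\TT^n)$ --- handled by the preceding Proposition --- and (ii) in Step~3, verifying that $\Val(\Gamma)$ is an honest tropical variety of the correct dimension and that the tropical projection inherits degree one from the algebraic isomorphism of $\Gamma$ onto a dense open of $M'$ (if $M'$ is reducible or not equidimensional, ``degree one'' is read component-wise and nothing changes). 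I expect (ii) to be the main, though still routine, obstacle.
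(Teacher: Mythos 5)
The paper states this proposition with a \qed attached to the statement and supplies no proof at all, so there is nothing internal to compare against; your argument --- the limit-of-complex-amoebas versus non-Archimedean-amoeba comparison, Kapranov's theorem applied to $y-f(x)$ to identify the tropicalization of the graph with the zero set of $F(X)+_{\trop}Y$ (hence with $m_N(\TT^n)$ by the immediately preceding proposition), and the degree-one projection argument in the $M$-case --- is exactly the standard chain of facts the author is implicitly invoking. Your two flagged care points (enlarging $\KK$ to convergent Puiseux series so that Kapranov's theorem and the amoeba comparison apply verbatim, and verifying that $\Val(\Gamma)\to M$ inherits degree one from the algebraic isomorphism) are the right ones, and the proposal is correct.
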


We repeat again that given {\bf only} tropical curves $C_1,C_2\subset \TT^2$, in general it is not possible to uniquely
``determine'' the image of $C_1$ after the modification along
$C_2$.  That is why a modification of a curve along another curve is multivalued and rather can be used as {\it a method}. 
The strategy being usually applied is the following: given two tropical curves, we lift them
in a non-Archimedean field (or present them as limits of complex curves, that is the same), then we
construct the graph of the function as above and take the non-Archimedean amoeba of the limit of amoebae. Depending on the
conditions we imposed on lifted curves (be smooth or singular, be
tangent to each other, etc.), we will have a set of possible results (which often can be described using simple combinatorial conditions) for
modification of the first curve along the second curve; see examples below.

If \(C_1\) intersects \(C_2\) transversally, then \(m_{C_2}(C_1)\) is uniquely determined. If not, there are the following conditions:
\begin{itemize} 
\item one equality (via tropical momentum or tropical Menelaus theorem):  the sum of the coordinates of
all the legs of $m_{C_2}(C_1)$ going to minus infinity by $Z$-coordinate is fixed, see Proposition~\ref{prop_brug}; 
\item one inequality (subordination of divisors): the valuation of the divisor of the intersection of lifted curves is {\it subordinate} to the stable intersection of $C_1$ and $C_2$ (Theorem~\ref{th_subordinate}). 
\end{itemize}
Both conditions have higher dimensional analogs.

\subsection{Examples}

In this section we study examples of modification, treated as
a method. The reader should not be scared by these horrific equations; they are
reverse-engineered, starting from the pictures.  All the calculations
are quite straightforward.

We start by considering the modification of a curve along itself and discuss the ambiguity that appears in this case.  Then, we consider how modifications resolve indeterminacy when the intersection of tropical
objects is non-transversal. This example promotes the point of view that a tropical modification is
the same as adding a new coordinate. So, if one changes coordinates, one can do it via repetitive modifications as in \cite{ganor2022enumeration}.

In the third example, a
modification helps recover the position of the inflection point. Also, the usefulness 
of the tropical momentum and tropical Menelaus Theorem is
demonstrated. The tropical Weil theorem that shortens the combinatorial descriptions of
possible modification results is proved in Section \ref{weil}.

In the fourth example we study the influence of a singular point on 
the Newton polygon of a curve. The identical method suits higher
dimension and different types of singularities, but more needs to be
done due to complicated combinatorics. In the same example, we describe how to find
all possible valuations of the intersections of a line with a curve,
knowing only their stable tropical intersection -- the key tool here is the
Vieta theorem. The same arguments 
may be applied for non-transversal intersections of tropical varieties
of any dimension.

\begin{example}[Modification along itself]

Consider a tropical horizontal line $L$, given by $\max(1,Y)$. This is the tropicalization of a line of the type $y=t^{-1}+o(t^{-1})$. Note that if we make a modification of a line along itself, then all its points go to the minus infinity (Figure~\ref{fig_modifitself}, left). Indeed, if $F(x,y)$ is the equation of $C$, then the set of points $\{(x,y,F(x,y))|(x,y)\in C\}$ belongs to the plane $z=0$, so $$\Val(\{(x,y,F(x,y))|(x,y)\in C\})\subset \{(X,Y,Z)\in \TT^3 | Z=-\infty\}.$$ On the other hand, if we consider two different lines $C_1,C_2$ (with equations $y=t^{-1}$ and $y=t^{-1}+t^{3}$) whose  tropicalization is $L$, then all the points in $m_{C_1}C_2$ have the valuation $-3$ of $Z$ coordinate. Again, we see an ambiguity --- even if $L$ is fixed, we can take different lifts of $L$ and have different results of the modification. On the other hand we can say that the canonical modification along itself is the result similar to Figure~\ref{fig_modifitself}, left, i.e. we might define $m_CC$ as the projection of $C$ to the plane $Z=-\infty$. Nevertheless, it is better to always keep this unambiguity in mind instead of giving a precise definition of $m_CC$. More about the question of intersections of two curves having the same tropicalization can be found in \cite{len2020lifting}.
\end{example}

\begin{figure}
\begin{center}
\begin{subfigure}[htb]{0.45\textwidth}
\begin{tikzpicture}
[y= {(0.5cm,0.5cm)}, z={(0cm,0.5cm)}, x={(0.5cm,-0.2cm)},scale=0.3]
\newcommand{\lx}{-11};
\newcommand{\rx}{4};
\newcommand{\lz}{-7};
\newcommand{\ly}{-4};
\newcommand{\ry}{4}
\newcommand{\front}{10}
\draw[black,dashed](\lx,\front,\lz)--(\lx,\front,\ry)--(\rx,\front,\ry)--(\rx,\front,\lz)--cycle;

\draw[red,very thick] (\lx,\front,\lz)--(\rx,\front,\lz);

\draw[very thick, blue] (\lx,\front,1)--(\rx,\front,1);
\newcommand{\bottom}{-14}
\draw[black,dashed](\lx,\ly,\bottom)--(\lx,1,\bottom)--(\rx,1,\bottom)--(\rx,\ly,\bottom)--cycle;
\draw[black,dashed](\lx,1,\bottom)--(\lx,\ry,\bottom)--(\rx,\ry,\bottom)--(\rx,1,\bottom)--cycle;
\draw[black,dashed](\lx,1,1)--(\lx,1,\lz)--(\rx,1,\lz)--(\rx,1,1)--cycle;
\draw[dashed, fill opacity=0.5](\lx,1,1)--(\lx,\ly,1)--(\rx,\ly,1)--(\rx,1,1)--cycle;
\draw[black,dashed](\lx,1,1)--(\lx,\ry,\ry)--(\rx,\ry,\ry)--(\rx,1,1)--cycle;
\draw[red,very thick] (\lx,1,\lz)--(\rx,1,\lz);
\draw[very thick, blue] (\lx,1,1)--(\rx,1,1);
\draw[very thick, blue] (\lx,1,\bottom)--(\rx,1,\bottom);
\end{tikzpicture}
\end{subfigure}
\begin{subfigure}[htb]{0.45\textwidth}
\begin{tikzpicture}
[y= {(0.5cm,0.5cm)}, z={(0cm,0.5cm)}, x={(0.5cm,-0.2cm)},scale=0.3]
\newcommand{\lx}{-11};
\newcommand{\rx}{4};
\newcommand{\lz}{-7};
\newcommand{\ly}{-4};
\newcommand{\ry}{4}
\newcommand{\front}{10}
\draw[black,dashed](\lx,\front,\lz)--(\lx,\front,\ry)--(\rx,\front,\ry)--(\rx,\front,\lz)--cycle;

\draw[red,very thick] (\lx,\front,-3)--(\rx,\front,-3);

\draw[very thick, blue] (\lx,\front,1)--(\rx,\front,1);
\newcommand{\bottom}{-14}
\draw[black,dashed](\lx,\ly,\bottom)--(\lx,1,\bottom)--(\rx,1,\bottom)--(\rx,\ly,\bottom)--cycle;
\draw[black,dashed](\lx,1,\bottom)--(\lx,\ry,\bottom)--(\rx,\ry,\bottom)--(\rx,1,\bottom)--cycle;
\draw[black,dashed](\lx,1,1)--(\lx,1,\lz)--(\rx,1,\lz)--(\rx,1,1)--cycle;
\draw[dashed, fill opacity=0.5](\lx,1,1)--(\lx,\ly,1)--(\rx,\ly,1)--(\rx,1,1)--cycle;
\draw[black,dashed](\lx,1,1)--(\lx,\ry,\ry)--(\rx,\ry,\ry)--(\rx,1,1)--cycle;
\draw[red,very thick] (\lx,1,-3)--(\rx,1,-3);
\draw[very thick, blue] (\lx,1,1)--(\rx,1,1);
\draw[very thick, blue] (\lx,1,\bottom)--(\rx,1,\bottom);
\end{tikzpicture}
\end{subfigure}

\end{center}
\caption{Example of a modification of a line along itself. Let $L_1,L_2$ be defined by $y=t^{-1},y=t^{-1}+t^{3}$ respectively. In each group of pictures, the bottom picture is the initial $\TT^2$, the middle picture is $m_{L_1}\TT^2$, the picture at the back is the projection to $X,Z$-plane. On the left we see the modification of $L_1$ along $L_1$, on the right we see the modification of $L_2$ along $L_1$. Red line is the result $m_{L_1}L_1$ (resp. $m_{L_1}L_2$) of the modification. }
\label{fig_modifitself}

\end{figure}
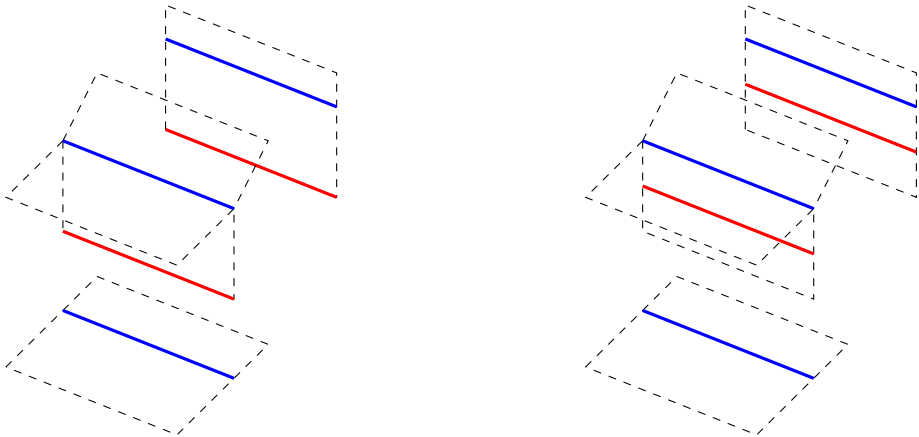

\newpage
\begin{example} [Modification, root of big multiplicity, Figure \ref{modif}]
\label{ex_big}

In this example we see two tropical curves with non-transverse
intersection which hides tangency and genus. Consider the plane curve $C$, given by the following equation:
$F(x,y)=0$, $$F(x,y) =
(x-t^{1/3})^3(x-t^{-2})+t^{-4}xy^2+(t^{-4}+2t^{-5})xy+(t^{-5}+t^{-6})x.$$
Its tropicalization\footnote{One can think that we have a family of curves $C_t$ (given by $F_t=0$)
with parameter $t$ and its tropicalization is the limit of amoebas $\lim_{t\to
  0}\Log_t(\{(x,y)|F_t(x,y)=0\})$, or that we have a curve $C$ over Puiseux
series $\CC\{\{t\}\}=\KK$ given by $\sum a_{ij}x^iy^j=0, a_{ij}\in\KK$. Its non-Archimedean amoeba is given by the set of non-smooth
points of the function $\max_{ij}(\val(a_{ij})+ix+jy)$. Both ways lead to
the same result.}
is the curve, given by the set of non-smooth points of $$\Trop(F)=\max(1, 6+x,
5+x+y,4+x+2y,5/3+2x, 2+3x,4x).$$ We
want to know what is the intersection of $C$ with the line $L$ given
by the equation $y+t^{-1}=0$. Tropicalizations of $C$ and $L$ are
drawn on Figure \ref{modif}, below, as well as the Newton polygon of $C$. The intersection is not
transverse, hence we do not know the tropicalization of $C\cap L$.
\end{example}
\begin{figure}
\begin{center}
\begin{subfigure}[htb]{0.4\textwidth}
\begin{tikzpicture}
[y= {(0.5cm,0.5cm)}, z={(0cm,0.5cm)}, x={(0.5cm,-0.2cm)},scale=0.3]
\newcommand{\lx}{-11};
\newcommand{\rx}{4};
\newcommand{\lz}{-7};
\newcommand{\ly}{-4};
\newcommand{\ry}{4}
\newcommand{\front}{10}
\draw[black,dashed](\lx,\front,\lz)--(\lx,\front,\ry)--(\rx,\front,\ry)--(\rx,\front,\lz)--cycle;

\draw[red,very thick] (-1,\front,\lz)--(-1,\front,-4)--(1,\front,0)--(2,\front,1);
\draw[red,very thick] (2,\front,1)--(2,\front,\lz);
\draw (2.7,\front, -5) node {$1$};
\draw (-0.3,\front, -5) node {$3$};

\draw[red,very thick] (-1,\front,-4)--(-5,\front,0)--(-7,\front,1);
\draw[very thick] (-7,\front,1)--(-11,\front,3);
\draw[very thick] (2,\front,1)--(4,\front,4);
\draw[red,very thick] (-5,\front,0)--(1,\front,0);
\draw[very thick, blue] (\lx,\front,1)--(\rx,\front,1);
\newcommand{\bottom}{-10}
\draw[black,dashed](\lx,\ly,\bottom)--(\lx,1,\bottom)--(\rx,1,\bottom)--(\rx,\ly,\bottom)--cycle;
\draw[black,dashed](\lx,1,\bottom)--(\lx,\ry,\bottom)--(\rx,\ry,\bottom)--(\rx,1,\bottom)--cycle;

\draw[very thick] (-7,1,\bottom)--(-7,\ly,\bottom);
\draw (-6,-3, \bottom) node {$1$};
\draw (3,-3, \bottom) node {$3$};

\draw[very thick] (2,1,\bottom)--(2,\ly,\bottom);
\draw[very thick] (-11,3,\bottom)--(-7,1,\bottom)--(2,1,\bottom)--(4,4,\bottom);
\draw[very thick, blue] (\lx,1.1,\bottom)--(\rx,1.1,\bottom);
\draw[black,dashed](\lx,1,1)--(\lx,1,\lz)--(\rx,1,\lz)--(\rx,1,1)--cycle;
\draw[dashed, fill opacity=0.5](\lx,1,1)--(\lx,\ly,1)--(\rx,\ly,1)--(\rx,1,1)--cycle;
\draw[black,dashed](\lx,1,1)--(\lx,\ry,\ry)--(\rx,\ry,\ry)--(\rx,1,1)--cycle;
\draw[red,very thick] (-1,1,\lz)--(-1,1,-4)--(1,1,0)--(2,1,1);
\draw[very thick] (2,1,1)--(4,4,4);
\draw[red,very thick] (2,1,1)--(2,1,\lz);
\draw[red,very thick] (-1,1,-4)--(-5,1,0)--(-7,1,1);
\draw[very thick] (-7,1,1)--(-11,3,3);
\draw[red, very thick] (-5,1,0)--(1,1,0);
\draw[very thick] (-7,1,1)--(-7,\ly,1);
\draw[very thick] (2,1,1)--(2,\ly,1);
\draw[very thick, blue] (\lx,1,1)--(\rx,1,1);
\end{tikzpicture}
\caption{Initial picture is below. In the center we see the limit of the graphs of the logarithm of the functions $F_{2,t}$. On the picture behind we see the projection of the graph to the plane $XZ$. Numbers on the edges are the corresponding weights.}
\label{modif}
\end{subfigure}
\quad\quad
\begin{subfigure}[htb]{0.5\textwidth}
\begin{tikzpicture}
\begin{scope}
[y= {(0.5cm,0.5cm)}, z={(0cm,0.5cm)}, x={(0.5cm,-0.2cm)},scale=0.25]
\newcommand{\lx}{-11};
\newcommand{\rx}{4};
\newcommand{\lz}{-7};
\newcommand{\ly}{-4};
\newcommand{\ry}{4}
\newcommand{\front}{10}
\draw[black,dashed](\lx,\front,\lz)--(\lx,\front,\ry)--(\rx,\front,\ry)--(\rx,\front,\lz)--cycle;
\draw[very thick, blue] (\lx,\front,1)--(\rx,\front,1);
\draw[red,very thick] (-7,\front,\lz)--(-7,\front,1);
\draw[very thick] (-7,\front,1)--(2,\front,1)--(4,\front,4);
\draw[red,very thick] (2,\front,1)--(2,\front,\lz);
\draw[very thick] (-11,\front,3)--(-7,\front,1);

\draw (-6,\front,-6) node {$1$};
\draw (3,\front,-6) node {$3$};

\newcommand{\bottom}{-10}
\draw[black,dashed](\lx,\ly,\bottom)--(\lx,1,\bottom)--(\rx,1,\bottom)--(\rx,\ly,\bottom)--cycle;
\draw[black,dashed](\lx,1,\bottom)--(\lx,\ry,\bottom)--(\rx,\ry,\bottom)--(\rx,1,\bottom)--cycle;
\draw[very thick] (-7,1,\bottom)--(-7,\ly,\bottom);
\draw[very thick] (2,1,\bottom)--(2,\ly,\bottom);
\draw[very thick] (-11,3,\bottom)--(-7,1,\bottom)--(2,1,\bottom)--(4,4,\bottom);
\draw[very thick, blue] (\lx,1.1,\bottom)--(\rx,1.1,\bottom);

\draw (-6,-3, \bottom) node {$1$};
\draw (3,-3, \bottom) node {$3$};

\draw[black,dashed](\lx,1,1)--(\lx,1,\lz)--(\rx,1,\lz)--(\rx,1,1)--cycle;
\draw[blue!100,dashed](\lx,1,1)--(\lx,\ly,1)--(\rx,\ly,1)--(\rx,1,1)--cycle;
\draw[black,dashed](\lx,1,1)--(\lx,\ry,\ry)--(\rx,\ry,\ry)--(\rx,1,1)--cycle;
\draw[very thick, blue] (\lx,1,1)--(\rx,1,1);
\draw[very thick] (-11,3,3)--(-7,1,1)--(2,1,1)--(4,4,4);
\draw[very thick] (2,1,1)--(2,1,\lz);
\draw[very thick] (-7,1,1)--(-7,1,\lz);
\draw[very thick] (-7,1,1)--(-7,\ly,1);
\draw[very thick] (2,1,1)--(2,\ly,1);
\end{scope}
\begin{scope}[shift={(2,-3)}, scale=0.4]
\draw[very thick] (0,0)--(4,0)--(1,2)--(0,0);
\draw[very thick] (1,0)--(1,2);
\draw (0,0) node {$\bullet$};
\draw (1,0) node {$\bullet$};
\draw (2,0) node {$\bullet$};
\draw (3,0) node {$\bullet$};
\draw (4,0) node {$\bullet$};
\draw (1,2) node {$\bullet$};
\draw (1,1) node {$\bullet$};
\end{scope}
\end{tikzpicture}
\caption{Notation is the same as for the picture on the left. We see the result of the modification in the case when the stable intersection is the actual intersection. The Newton polygon of the curve $C$ is depicted below.}
\label{fig_stable}
\end{subfigure}
\end{center}
\caption{Example of a modification along a line.}
\end{figure}
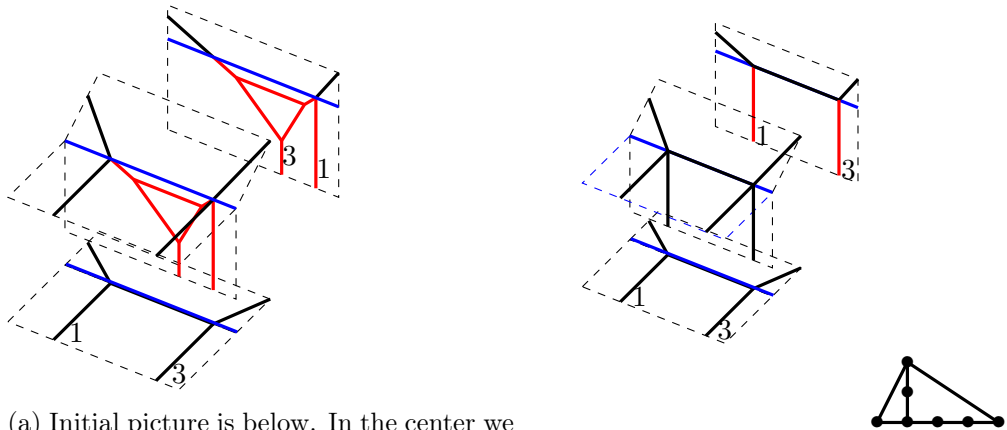

To deal with that, let us consider the map $m_{L}:(x,y)\to (x,y,y+t^{-1})$. On
Figure \ref{modif}, in the middle, we
see the tropicalization of the set $\{(x,y,y+t^{-1})\}$ and the
tropicalization of the image of $C$ under the map $m_{L}$.
Let $G(x,z)$ be the equation of the projection of $m_L(C)$ on the $xz$-plane. So, $F(x,y)=0$ implies that for the new coordinate $z=y+t^{-1}$ we have

\begin{equation}
G(x,z)=0, G(x,z)=(x-t^{1/3})^3(x-t^{-2})+t^{-4}xz+t^{-4}xz^2.
\end{equation}

Therefore the curve $C'=pr_{xz}m_L(C)$ is given by the set of non-smooth points of $$\max(1, 4+X+Y,
4+X+2Y,2+3X,4X),$$ we see $C'$ on the projection onto the plane $XZ$ on the left
part of Figure \ref{modif}. Notice that in order to recover the transversal intersection
of non-Archimedean amoebas we did nothing else
as a change of coordinates.

\begin{remark}
\label{rem_restriction}
Consider the restriction of $\Trop(F)$ on the line $Y=1$. We obtain $\max(1,7+X,5/3+2X,2+3X,4X)=\max(1,7+X,4X)$, whose locus of non-linearity corresponds to the stable intersection of our tropical curves. On the other hand, if we restrict $F$ on the line $y+t^{-1}=0$ and only then take the valuation, we obtain $\max(1,3X+2,4X)$ because $F(x,-t^{-1})=(x-t^{1/3})^3(x-t^{-2})$, and we see that this agrees with the picture of the modifications.
\end{remark}

\begin{definition}
As we see in this example, a tropical curve in $\TT^n$ typically contains infinite edges. We call them {\it legs} of a tropical curve. For each leg we have a canonical parametrization $(a_0+p_0s,a_1+p_1s,a_2+p_2s)$ where $a_i\in \RR,p_i\in\ZZ,s\in\RR, s\geq 0$, where the vector $(p_0,p_1,p_2)$, the {\it direction} of the leg, is primitive.
\end{definition}

Now, on the tropicalization of $C'$ we see a vertical leg of weight \(3\), whose \(Z\)-coordinate goes to \(-\infty\). That happens because we have the tangency
of order $3$ between $C$ and $L$, and $z$ as a function of $x$ has a root of order 3.

Note that this leg cannot mean that the point is a singular point of $C$, because the
curve $C$ (according to
criteria of \cite{markwig} or, more generally \cite{kalinin}) has no
singular points, even though the tropicalization of $C$ has an edge of
multiplicity 3. 

Thus, this new tropicalization restores the multiplicity of the intersection.
We see that the modification of the plane (i.e. amoeba of the
set $\{(x,y,y+t^{-1})\}$) is defined, but in codimension
one this procedure shows multiplicities of roots and more unapparent structures such as hidden
genus squashed initially onto an edge. One can think that this cycle was close to intersection, but after a change of
coordinates it becomes visible on the picture of the amoeba of $C'$. More examples of this kind can be found in \cite{
2014arXiv1409.7430A,cueto2018tropical}.

\begin{remark}
Nevertheless, for a general choice of representatives in Puiseaux series for these two
tropical curves $\Trop(C),\Trop(L)$, after modification we will have Figure~\ref{fig_stable},
which represents stable intersection of the curves.
\end{remark}

\begin{example}[Modification, inflection point, momentum map]
\label{ex_infl}

We consider a curve and its tangent line at an inflection
point. Suppose, that the intersection of their tropicalizations is not
transverse. How can we recover the presence of the inflection point?
\end{example}

We consider a curve $C$ with the equation $F(x,y)=0$ where
\begin{align*}F(x,y)=&y+t^{-3}xy+
(t^{-1}+4+6t+4t^2+t^3)x^2+(-t^{-3}-3-t-t^2)xy^2\\
&+(t^{-2}-t^{-1}-2+t^2+t^3)x^2y+x^2y^2,
\end{align*}
and a line $L$ with the equation $y=1+tx$. The equation of the curve is
chosen just in such a way that the restriction of $F$ on the line $L$ is
$t^2(x-1)^3(x-t^{-1})$, i.e. the point $(1,1+t)$
is the inflection point of the curve and $L$ is tangent to $C$ at this point.

Tropicalization of the curve is given by the following equation:

\begin{equation}
\Trop(F)=\max(y,x+y+3,2x+1,2x+y+2,x+2y+3,2x+2y).
\end{equation}

On Figure~\ref{fig_inflection} we see the non-Archimedean amoeba of the image of the curve under the map $(x,y)\to(x,y,y - 1- tx)$.
 
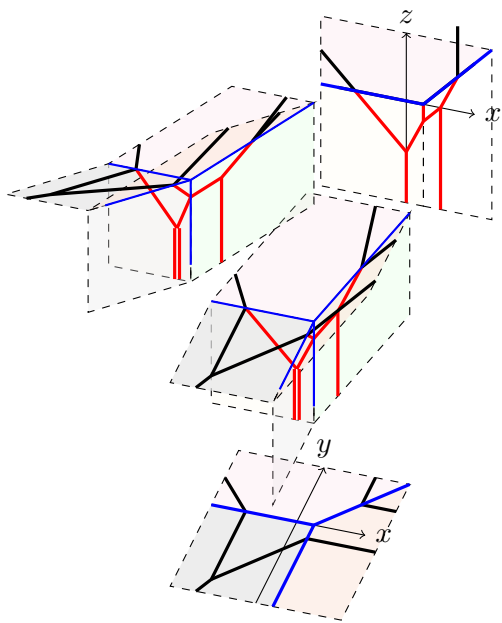
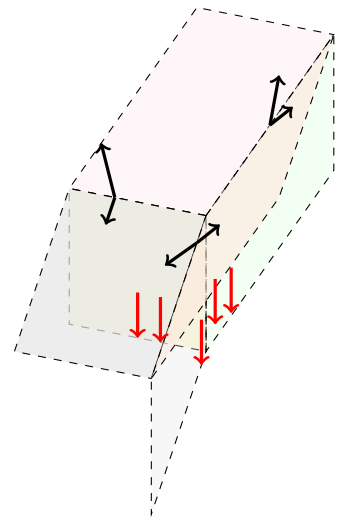
\begin{figure}[htbp]

\begin{center}
\begin{subfigure}[b]{0.55\textwidth}
\begin{tikzpicture}
[x={(0.5cm,-0.1cm)},y= {(0.2cm,0.4cm)}, z={(0.0cm,0.5cm)}, scale=0.45]

\begin{scope}[xshift=-6cm]
\newcommand{\y}{16}
\filldraw[black,fill=yellow!20,dashed, fill
opacity=0.1](1,\y,0)--(-5,\y,0)--(-5,\y,-6)--(1,\y,-6)--cycle;
\filldraw[black,dashed,fill
opacity=0.0](1,\y,0)--(5,\y,4)--(5,\y,-6)--(1,\y,-6)--cycle;
\filldraw[black,fill=Magenta!20,dashed, fill
opacity=0.2](1,\y,0)--(-5,\y,0)--(-5,\y,4)--(5,\y,4)--cycle;

\draw[->] (-4,\y,0) -- (4,\y,0);
\draw (4,\y,0) node[right] {$x$};
\draw [->] (0,\y,-6) -- (0,\y,4);
\draw (0,\y,4) node[above] {$z$};

\draw[very thick](-5,\y,2)--(-3,\y,0);
\draw[very thick](-4,\y,0)--(1,\y,0)--(4,\y,3);
\draw[very thick](3,\y,5)--(3,\y,2);
\draw[red, very thick](-3,\y,0)--(0,\y,-3)--(0,\y,-6);
\draw[red, very thick](0,\y,-3)--(1,\y,-1)--(2,\y,0)--(3,\y,2);
\draw[red, very thick](1,\y,0)--(1,\y,-1);
\draw[red, very thick](2,\y,0)--(2,\y,-6);
\draw[blue, very thick](-5,\y,0)--(1,\y,0)--(5,\y,4);

\filldraw[black,fill=yellow!20,dashed, fill
opacity=0.1](1,0,0)--(-5,0,0)--(-5,0,-6)--(1,0,-6)--cycle;
\filldraw[black,fill=Magenta!20,dashed, fill
opacity=0.2](1,0,0)--(-5,0,0)--(-1,4,4)--(5,4,4)--cycle;

\filldraw[black,fill=gray!20,dashed, fill
opacity=0.7](1,0,0)--(-5,0,0)--(-5,-6,0)--(1,-6,0)--cycle;

\filldraw[black,fill=gray!20,dashed, fill
opacity=0.3](1,0,0)--(1,-6,0)--(1,-6,-6)--(1,0,-6)--cycle;

\filldraw[black,fill=green!20,dashed, fill
opacity=0.2](1,0,0)--(5,4,4)--(5,4,-2)--(1,0,-6)--cycle;

\filldraw[black,fill=Melon!20,dashed, fill
opacity=0.6](1,0,0)--(5,4,4)--(5,-2,4)--(1,-6,0)--cycle;

\draw[very thick] (-4,1,1)--(-3,0,0);
\draw[red, very thick](-3,0,0)--(0,0,-3)--(0,0,-6);
\draw[red, double, very thick](0,0.001,-3)--(0,0.001,-6);

\draw[red, very thick] (0,0,-3)--(1,0,-1);
\draw[red, very thick] (1,0,-1)--(1,-1,0);

\draw[red, very thick] (1,0,-1)--(2,1,0)--(3,2,2);

\draw[very thick] (1,-1,0)--(-3,-5,0)--(-3.5,-6,0);
\draw[very thick](-3,-5,0)--(-3,0,0);
\draw[red, very thick](2,1,0)--(2,1,-5);
\draw[very thick] (3,2,2)--(5,2,4);
\draw[very thick] (1,-1,0)--(5,-1,4);
\draw[very thick](3,2,2)--(3,4,4);

\draw[blue, thick] (1,-5,0)--(1,0,0)--(5,4,4);
\draw[blue,thick] (-5,0,0)--(1,0,0)--(1,0,-5);

\newcommand{\z}{-12}
\filldraw[black,fill=gray!20,dashed, fill
opacity=0.7](1,0,\z)--(-5,0,\z)--(-5,-6,\z)--(1,-6,\z)--cycle;
\filldraw[black,fill=Melon!20,dashed, fill
opacity=0.6](1,0,\z)--(5,4,\z)--(5,-6,\z)--(1,-6,\z)--cycle;
\filldraw[black,fill=Magenta!20,dashed, fill
opacity=0.2](1,0,\z)--(-5,0,\z)--(-5,4,\z)--(5,4,
\z)--cycle;
\draw[->] (-4,0,\z) -- (4,0,\z);
\draw (4,0,\z) node[right] {$x$};
\draw [->] (0,-6,\z) -- (0,4,\z);
\draw (0,4,\z) node[above] {$y$};

\draw[black,thin] (-5,0,\z)--(1,0,\z)--(1,-6,\z);
\draw[black,thin] (1,0,\z)--(5,4,\z);

\draw[very thick]
(-5,2,\z)--(-3,0,\z)--(-3,-5,\z)--(-3.5,-6,\z);
\draw[very thick](-3,-5,\z)--(1,-1,\z)--(5,-1,\z);
\draw[very thick](3,4,\z)--(3,2,\z)--(5,2,\z);
\draw[blue, very thick](-5,0,\z)--(1,0,\z)--(1,-6,\z);
\draw[blue, very thick](1,0,\z)--(5,4,\z);
\end{scope}

\begin{scope}[x={(0.4cm,-0.08cm)},y= {(0.5cm,0.15cm)}, z={(0.0cm,0.5cm)}, scale=1, shift={(-35,9)}]

\filldraw[black,fill=yellow!20,dashed, fill
opacity=0.1](1,0,0)--(-5,0,0)--(-5,0,-6)--(1,0,-6)--cycle;
\filldraw[black,fill=Magenta!20,dashed, fill
opacity=0.2](1,0,0)--(-5,0,0)--(-1,4,4)--(5,4,4)--cycle;

\filldraw[black,fill=gray!20,dashed, fill
opacity=0.7](1,0,0)--(-5,0,0)--(-5,-6,0)--(1,-6,0)--cycle;

\filldraw[black,fill=gray!20,dashed, fill
opacity=0.3](1,0,0)--(1,-6,0)--(1,-6,-6)--(1,0,-6)--cycle;

\filldraw[black,fill=green!20,dashed, fill
opacity=0.2](1,0,0)--(5,4,4)--(5,4,-2)--(1,0,-6)--cycle;

\filldraw[black,fill=Melon!20,dashed, fill
opacity=0.6](1,0,0)--(5,4,4)--(5,-2,4)--(1,-6,0)--cycle;

\draw[very thick] (-4,1,1)--(-3,0,0);
\draw[red, very thick](-3,0,0)--(0,0,-3)--(0,0,-6);
\draw[red, double, very thick](0,0.001,-3)--(0,0.001,-6);


\draw[red, very thick] (0,0,-3)--(1,0,-1);
\draw[red, very thick] (1,0,-1)--(1,-1,0);

\draw[red, very thick] (1,0,-1)--(2,1,0)--(3,2,2);

\draw[very thick] (1,-1,0)--(-3,-5,0)--(-3.5,-6,0);
\draw[very thick](-3,-5,0)--(-3,0,0);
\draw[red, very thick](2,1,0)--(2,1,-5);
\draw[very thick] (3,2,2)--(5,2,4);
\draw[very thick] (1,-1,0)--(5,-1,4);
\draw[very thick](3,2,2)--(3,4,4);

\draw[blue, thick] (1,-5,0)--(1,0,0)--(5,4,4);
\draw[blue,thick] (-5,0,0)--(1,0,0)--(1,0,-5);

\end{scope}
\end{tikzpicture}
\caption{In the center we see a modification of the picture below, its $XZ$-projection is on the right, on the left we see it from a different perspective. Doubled red lines graphically represents a line with weight three.}
\label{fig_inflection}
\end{subfigure}
\quad\quad
\begin{subfigure}[b]{0.35\textwidth}
\centering
\begin{tikzpicture}
[x={(0.5cm,-0.1cm)},y= {(0.2cm,0.6cm)}, z={(0.0cm,0.5cm)}, scale=0.6,shift={(3,0)}]
\filldraw[black,fill=yellow!20,dashed, fill
opacity=0.9](1,0,0)--(-5,0,0)--(-5,0,-6)--(1,0,-6)--cycle;
\filldraw[black,fill=Magenta!20,dashed, fill
opacity=0.2](1,0,0)--(-5,0,0)--(-1,4,4)--(5,4,4)--cycle;
\filldraw[black,fill=gray!20,dashed, fill
opacity=0.7](1,0,0)--(-5,0,0)--(-5,-6,0)--(1,-6,0)--cycle;
\filldraw[black,fill=gray!20,dashed, fill
opacity=0.3](1,0,0)--(1,-6,0)--(1,-6,-6)--(1,0,-6)--cycle;
\filldraw[black,fill=green!20,dashed, fill
opacity=0.2](1,0,0)--(5,4,4)--(5,4,-2)--(1,0,-6)--cycle;
\filldraw[black,fill=Melon!20,dashed, fill
opacity=0.6](1,0,0)--(5,4,4)--(5,-2,4)--(1,-6,0)--cycle;
\draw[->,black, very thick] (-3,0,0)--(-4,1,1);

\draw[->,black, very thick] (1,-1,0)--(0,-2,0);
\draw[->,black, very thick](-3,0,0)--(-3,-1,0);
\draw[->,black, very thick] (3,2,2)--(4,2,3);
\draw[->,black, very thick] (1,-1,0)--(2,-1,1);
\draw[->,black, very thick](3,2,2)--(3,3,3);

\draw[->,red, very thick](-2,0,-4)--(-2,0,-6);
\draw[->,red, very thick](-1,0,-4)--(-1,0,-6);
\draw[->,red, very thick](1,-0.5,-4)--(1,-0.5,-6);
\draw[->,red, very thick](1,1,-4)--(1,1,-6);
\draw[->,red, very thick](1.5,1.5,-4)--(1.5,1.5,-6);
\end{tikzpicture}
\caption{Application of the generalized tropical Menelaus Theorem: we know the direction of the infinite black rays
  emanating from the tropical curve (in the center on the left), therefore an application of this theorem gives the sum of
  $X$- and $Y$- coordinates of red legs, going vertically to the bottom (these
  legs present exactly the intersection of two considered curves.)}
\label{moment}
\end{subfigure}
\caption{Example of modification in the case of inflection point. The point $(0,0)$ on the bottom picture is the
  tropicalization of the inflection point. We modified the black curve along the blue curve, red parts are the parts becoming visible after the modification.}
\end{center}
\end{figure}

In order to find  $X$-coordinates of the possible legs we can apply
the tropical momentum: see Figure~\ref{moment}.

\begin{definition}
The momentum of a leg $(A_0+P_0s,A_1+P_1s,A_2+P_2s)$ with respect to a point $(B_0,B_1,B_2)$ is the vector product $(A_0-B_0,A_1-B_1,A_2-B_2)\times (P_0,P_1,P_2)$.
\end{definition}

We will prove a (simple) theorem that {\it the sum of the moments of the legs, counted with their weights, is zero.}
Note, that in our case, all the legs we do not know are of the form $(X_0,Y_0,Z_0-s)$, because they are vertical.
Refer to Figure~\ref{moment}. So, we take the vertex $O$ of
the tropical plane, and sum up the vector products $OX_i\times X_iY_i$
where $X_iY_i$ are black legs (that we already know) and red legs (which are all vertical). Computation gives us 
\begin{align*}
(&-4,0,0)\times
(-1,1,1)+(-4,0,0)\times (0,-1,0)+(0,-1,0)\times (-1,-1,0)\\
&+(0,-1,0)\times (1,0,1)+(2,2,2)\times (1,0,1)+(2,2,2)\times (0,1,1)\\
&+(X,0,0)\times (0,0,-1)+(0,Y,0)\times (0,0,-1) + (Z+1,Z,0)\times (0,0,-1)=0,
\end{align*}
 i.e.  $(1,-2,0)+(Y+Z+1, X+Z,0)=0$, where $X$ stands for the sum of the $X$-coordinates of the vertical legs situated under the line $(1-s,0,0)$, $Y$ stands for the sum of the $Y$-coordinates of the vertical legs under the line $(1,-s,0)$, $Z$ stands for the sum of the $Y$-coordinates of the vertical legs under the line $(1,s,s)$.

On the left picture we see where the red legs are situated. 
But, since modification of a tropical curve $C$ along a tropical
curve $C'$ is not canonically defined\footnote{If the intersection $C\cap C'$ is
  transverse, then the modification is uniquely defined.}, then, for example, a modification of $C$ could differ
from $C$ just by adding vertical legs at four vertices of the
$C$: this would correspond to the stable intersection (which is
always realizable in the sense that there exists a curve in Puiseux
series, such that the valuation of their intersection is the stable intersection)

\example{Singular point, its unique position, and possible liftings of intersection.}
\label{ex_singularexample}
Consider a curve $C'$ defined by the equation $G(x,y)=0$, where 

\begin{align*}G(x,y)&=t^{-3} xy^3 -
(3t^{-3}+t^{-2})xy^2+ (3t^{-3}+2t^{-2}-2t^{-1})xy \\
&-(t^{-3}+t^{-2}-2t^{-1}-3t^2)x+t^{-2}x^2y^2
-(2t^{-2}-t^{-1})x^2y\\
&+(t^{-2}-t^{-1}-3t^2)x^2+t^{-1}y -(t^{-1}+t^2)
+t^2x^3.\end{align*}
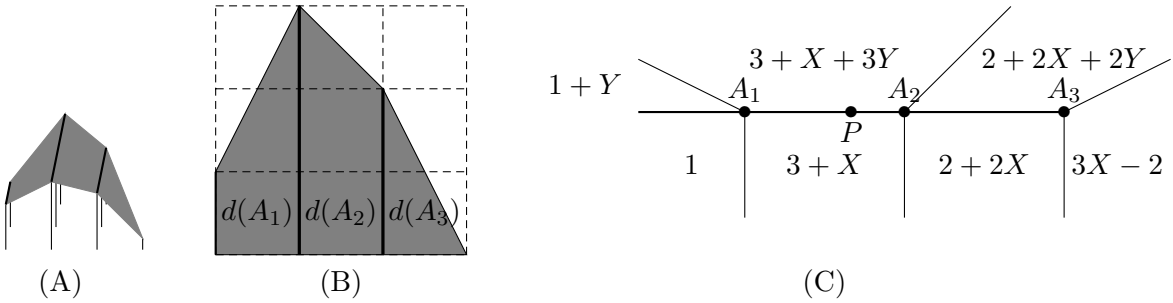
\begin{figure} [htbp]
\begin{tikzpicture}
[y= {(0.2cm,1cm)}, z={(0cm,0.5cm)}, x={(2cm,0cm)},scale=0.3]
\draw (0,0,1)--(0,0,-3);
\draw (0,1,1)--(0,1,-3);
\draw (1,3,3)--(1,3,-3);
\draw (1,2,3)--(1,2,-3);
\draw (1,1,3)--(1,1,-3);
\draw (1,0,3)--(1,0,-3);
\draw (2,0,2)--(2,0,-3);
\draw (2,1,2)--(2,1,-3);
\draw (2,2,2)--(2,2,-3);
\draw (3,0,-2)--(3,0,-3);

\filldraw[gray](0,0,1)--(1,0,3)--(1,3,3)--(0,1,1)--cycle;
\filldraw[gray](2,0,2)--(2,2,2)--(1,3,3)--(1,0,3)--cycle;
\filldraw[gray](2,2,2)--(2,0,2)--(3,0,-2)--cycle;
\draw[thick](2,2,2)--(2,0,2);
\draw[thick](1,3,3)--(1,0,3);
\draw[thick](0,0,1)--(0,1,1);
\draw (1.5,-3) node{$\mathrm{(A)}$}; 
\end{tikzpicture}
\qquad
\begin{tikzpicture}[scale=1.1]
\draw[fill=gray](0,0)--++(0,1)--++(1,2)--++(1,-1)--++(1,-2)--cycle;
\draw[very thick](1,0)--++(0,3);
\draw[very thick](2,0)--++(0,2);
\draw[thick](0,0)--++(0,1);
\draw(0.5,0.5) node {$d(A_1)$};
\draw(1.5,0.5) node {$d(A_2)$};
\draw(2.5,0.5) node {$d(A_3)$};

\draw[densely dashed] (0,0) grid (3,3);
\draw (1.5,-0.35) node{$\mathrm{(B)}$}; 
\end{tikzpicture}
\qquad
\begin{tikzpicture}[scale=0.7]

\draw[thick](3,6.5)--++(8,0);
\draw(3,7.5)--++(2,-1)--++(0,-2);
\draw(8,4.5)--++(0,2)--++(2,2);
\draw(11,4.5)--++(0,2)--++(2,1); 
\draw (5,6.5) node {$\bullet$} ;
\draw (5,6.5) node[above] {$A_1$} ;
\draw (7,6.5) node {$\bullet$} ;
\draw (7,6.5) node[below] {$P$} ;
\draw (8,6.5) node {$\bullet$} ;
\draw (8,6.5) node[above] {$A_2$} ;
\draw (11,6.5) node {$\bullet$} ;
\draw (11,6.5) node[above] {$A_3$} ;

\draw (4,5.5) node {$1$};
\draw (2,7) node {$1+Y$};
\draw (6.5,7.5) node {$3+X+3Y$};
\draw (6.5,5.5) node {$3+X$};
\draw (9.5,5.5) node {$2+2X$};
\draw (12,5.5) node {$3X-2$};
\draw (11,7.5) node {$2+2X+2Y$};
\draw (6.5,3.25) node{$\mathrm{(C)}$}; 
\end{tikzpicture}
\qquad
\begin{center}
\caption {The extended Newton
  polyhedron $\widetilde{\A}$ of the  curve $C'$ is drawn in $\mathrm{(A)}$. The projection of its faces gives us the subdivision of the
  Newton polygon of $C'$; see $\mathrm{(B)}$. The tropical curve $\Trop(C')$ is drawn
  in $\mathrm{(C)}$. The vertices $A_1,A_2,A_3$ have coordinates
  $(-2,0),(1,0),(4,0)$. The edge $A_1A_2$ has weight $3$, while
  the edge $A_2A_3$ has
  weight $2$. The point $P$ is $(0,0)=\Val((1,1))$. 
}
\label{fig_example3}
\end{center}
\end{figure}

Let us make the modification along the line $y=1$. For that we draw the graph of the function $z(x,y)=y-1$.

Note that we can easily find the number (with multiplicities) of the vertical legs. Indeed, each edge from $A_1,A_2,A_3$ going up in direction $(i,j)$ becomes after the modification a ray going in the direction $(i,j,j)$. Therefore, the total momentum of the vertical legs is the sum of $Y$-parts of momenta of the edges going up from $A_1,A_2,A_3$, that is, $3$. 
Then, if we know that after the modification our curve has a leg of weight $3$, then its unique position can be found from the generalized tropical Menelaus theorem. So, in this case (the points $(1,1)$ is of multiplicity $3$ for the curve) the pictures after the modifications is as on Figure~\ref{fig_oldexample}, left. If $\Val(C')=C$, but we do not have the other restricting condition, then the picture after the modification can be as in Figure~\ref{fig_oldexample}, right top, or right bottom, both cases can be realized.

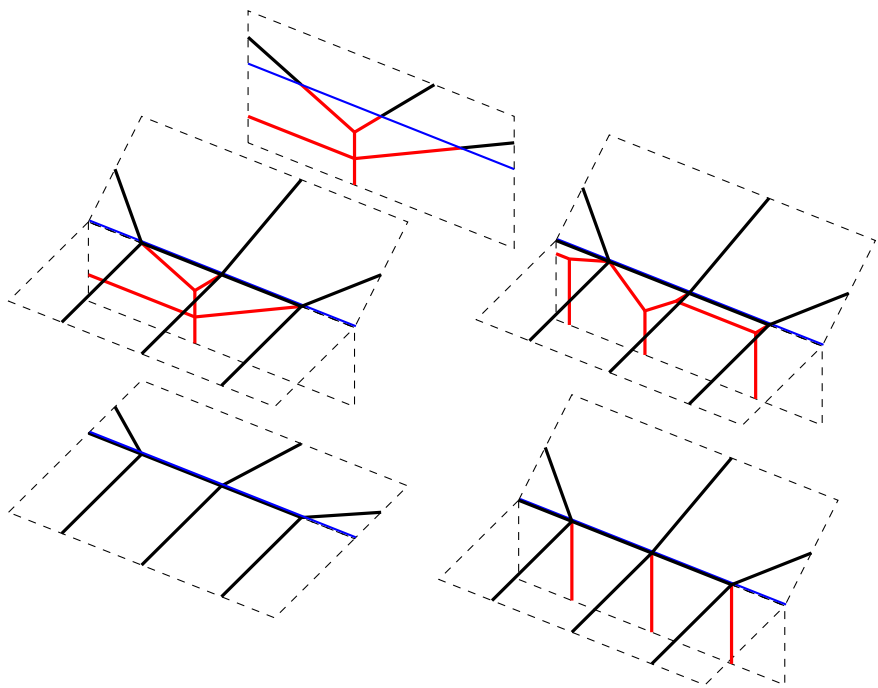
\begin{figure}[htbp]
\begin{tikzpicture}
[y= {(0.5cm,0.5cm)}, z={(0cm,0.5cm)}, x={(0.5cm,-0.2cm)},scale=0.7]
\newcommand{\lx}{-4};
\newcommand{\rx}{6};
\newcommand{\lz}{-3};
\newcommand{\rz}{2};
\newcommand{\ly}{-3};
\newcommand{\ry}{2}
\newcommand{\front}{6}
\newcommand{\bottom}{-8}
\draw[black,dashed](\lx,\front,\lz)--(\lx,\front,\rz)--(\rx,\front,\rz)--(\rx,\front,\lz)--cycle;

\draw[very thick] (-4,\front,1)--(-2,\front,0);
\draw[very thick, red] (-2,\front,0)--(0,\front,-1)--(1,\front,0);
\draw[very thick] (6,\front,1)--(4,\front,0);
\draw[very thick] (3,\front,2)--(1,\front,0);
\draw[very thick, red](4,\front,0)--(0,\front,-2)--(-4,\front,-2);
\draw[very thick, red] (0,\front,-1)--(0,\front,-3);
\draw[blue, thick] (-4,\front,0)--(6,\front,0);


\draw[black,dashed](\lx,\ly,\bottom)--(\lx,0,\bottom)--(\rx,0,\bottom)--(\rx,\ly,\bottom)--cycle;

\draw[black,dashed](\lx,0,\bottom)--(\lx,\ry,\bottom)--(\rx,\ry,\bottom)--(\rx,0,\bottom)--cycle;

\draw[very thick] (-4,1,\bottom)--(-2,0,\bottom)--(4,0,\bottom)--(6,1,\bottom);
\draw[very thick] (-4,0,\bottom)--(-2,0,\bottom)--(-2,-3,\bottom);
\draw[very thick] (2,2,\bottom)--(1,0,\bottom)--(1,-3,\bottom);
\draw[very thick] (4,0,\bottom)--(4,-3,\bottom);
\draw[blue, thick](-4,0.05,\bottom)--(6,0.05,\bottom);
\draw[black,dashed](\lx,0,0)--(\lx,0,\lz)--(\rx,0,\lz)--(\rx,0,0)--cycle;
\draw[dashed](\lx,0,0)--(\lx,\ly,0)--(\rx,\ly,0)--(\rx,0,0)--cycle;
\draw[black,dashed](\lx,0,0)--(\lx,\ry,\ry)--(\rx,\ry,\ry)--(\rx,0,0)--cycle;
\draw[red,very thick] (-2,0,0)--(0,0,-1)--(0,0,-3);
\draw[red,very thick] (4,0,0)--(0,0,-2)--(-4,0,-2);
\draw[red,very thick] (1,0,0)--(0,0,-1);
\draw[blue, thick] (-4,0.05,0)--(6,0.05,0);
\draw[very thick] (-2,0,0)--(4,0,0)--(6,1,1);
\draw[very thick] (-4,1,1)--(-2,0,0)--(-2,-3,0);
\draw[very thick] (2,2,2)--(1,0,0)--(1,-3,0);
\draw[very thick] (4,0,0)--(4,-3,0);

\begin{scope}[xshift=230,yshift=-150]
\draw[black,dashed](\lx,0,0)--(\lx,0,\lz)--(\rx,0,\lz)--(\rx,0,0)--cycle;
\draw[dashed](\lx,0,0)--(\lx,\ly,0)--(\rx,\ly,0)--(\rx,0,0)--cycle;
\draw[black,dashed](\lx,0,0)--(\lx,\ry,\ry)--(\rx,\ry,\ry)--(\rx,0,0)--cycle;
\draw[red,very thick] (-2,0,0)--(-2,0,-3);
\draw[red,very thick] (4,0,0)--(4,0,-3);
\draw[red,very thick] (1,0,0)--(1,0,-3);
\draw[blue, thick] (-4,0.05,0)--(6,0.05,0);
\draw[very thick] (-4,0,0)--(4,0,0)--(6,1,1);
\draw[very thick] (-4,1,1)--(-2,0,0)--(-2,-3,0);
\draw[very thick] (2,2,2)--(1,0,0)--(1,-3,0);
\draw[very thick] (4,0,0)--(4,-3,0);
\end{scope}

\begin{scope}[xshift=250,yshift=-10]
\draw[black,dashed](\lx,0,0)--(\lx,0,\lz)--(\rx,0,\lz)--(\rx,0,0)--cycle;
\draw[dashed](\lx,0,0)--(\lx,\ly,0)--(\rx,\ly,0)--(\rx,0,0)--cycle;
\draw[black,dashed](\lx,0,0)--(\lx,\ry,\ry)--(\rx,\ry,\ry)--(\rx,0,0)--cycle;
\draw[red,very thick] (-2,0,0)--(-3.5,0,-0.5)--(-4,0,-0.5);
\draw[red,very thick] (-3.5,0,-0.5)--(-3.5,0,-3);
\draw[red,very thick] (4,0,0)--(3.5,0,-0.5)--(3.5,0,-3);
\draw[red,very thick] (3.5,0,-0.5)--(0.5,0,-0.5)--(1,0,0);
\draw[red,very thick](0.5,0,-0.5)--(-0.66,0,-1.33)--(-2,0,0);
\draw[red,very thick](-0.66,0,-1.33)--(-0.66,0,-3);
\draw[blue, thick] (-4,0.05,0)--(6,0.05,0);
\draw[very thick] (-4,0,0)--(4,0,0)--(6,1,1);
\draw[very thick] (-4,1,1)--(-2,0,0)--(-2,-3,0);
\draw[very thick] (2,2,2)--(1,0,0)--(1,-3,0);
\draw[very thick] (4,0,0)--(4,-3,0);
\end{scope}
\end{tikzpicture}
\caption{Refer to Example~\ref{ex_singularexample}. Left bottom picture represents a curve $C$. On top of it, a modification of it is depicted, with the projection of the latter on the $XZ$-plane. On the right side we see two other possible modification of $C$.}
\label{fig_oldexample}
\end{figure}

\section{Structural theorems about tropical modification}

\subsection{Multiplicity of an intersection}

We will recall here certain facts about intersections of tropical curves.

The {\it multiplicity} $m(P)$ of the point $P$ of the transverse intersection of two lines in directions $(u_1,u_2),(v_1,v_2)\in\ZZ^2, gcd(u_1,u_2)=gcd(v_1,v_2)=1$ is  $|u_1v_2-u_2v_1|$.

Given two tropical curves $A,B\subset \TT^2$ we define their {\it stable intersection} as
follows. Let us choose a generic vector $v$. Then we consider the curves $T_{tv}A$
 where $t\in\RR, t\to 0$ and $T_{tv}$ is translation by
the vector $tv$. For a generic small positive $t$, the intersection $T_{tv}A\cap B$ is
transversal and consists of points $P_i^t, i =1,\dots,k$ with multiplicities $m(P_i^t)$.

\begin{prop}
\label{prop_stableint}
Suppose that a horizontal edge $E$ of a tropical curve $C$ contains a point $P$. Suppose that
on the dual subdivision of the Newton polygon for $C$ the vertical edge $d(E)$ is
dual to $E$. Let the endpoints of $E$ be $A_1,A_2$ and two faces
$d(A_1),d(A_2)$ adjacent to $d(E)$  have no other vertical edges.  
Let the sum of widths in the horizontal direction of the faces $d(A_1),d(A_2)$ be equal to $m$. Then the stable intersection of $E$ with a horizontal line through $E$ is $m$.
\end{prop}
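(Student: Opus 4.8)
The plan is to reduce the claim to a transversal intersection count near $\bar E=A_1A_2$ and then convert that count into lattice widths by planar duality. First I would perturb: replace the horizontal line $\ell$ through $E$ by the horizontal line $\ell_\e$ at height $Y_E+\e$ for a generic small $\e>0$, so that $\ell_\e$ meets $C$ transversally. The stable intersection of $E$ with $\ell$ is, by definition, the total multiplicity of the points of $C\cap\ell_\e$ that converge to $\bar E$ as $\e\to 0$. Since $d(A_1)$ and $d(A_2)$ carry no vertical edges besides $d(E)$, the only edges of $C$ touching $\bar E$ are $E$ and the non-horizontal edges issuing from $A_1$ and from $A_2$; consequently those persisting points are exactly the crossings of $\ell_\e$ with the edges of $C$ that leave $A_1$ (resp. $A_2$) with positive vertical slope, and each such crossing point tends to $A_1$ (resp. $A_2$) as $\e\to 0$.

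Next I would evaluate these multiplicities. If $e$ is an edge at $A_i$ of weight $c$ and primitive direction $(p,q)$ with $q>0$, its transversal intersection with the horizontal line $\ell_\e$ has multiplicity $c\,|\det M|$ with $M=\left(\begin{smallmatrix}1&0\\ p&q\end{smallmatrix}\right)$, i.e. $cq$. Hence the vertex $A_i$ contributes $m_i:=\sum_e c_e q_e$, summed over the edges at $A_i$ with positive vertical slope; by the balancing condition at $A_i$ — to which the horizontal edge $E$ contributes nothing in the vertical direction — the same number equals $\sum_e c_e|q_e|$ over the edges at $A_i$ with negative vertical slope.

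The heart of the matter is to identify $m_i$ with the horizontal width $w_i$ of the dual $2$-cell $d(A_i)$; granting this, the stable intersection of $E$ with $\ell$ equals $m_1+m_2=w_1+w_2=m$. For the identification I would use the standard duality: traversing the (convex) boundary $\partial d(A_i)$ counterclockwise, the segment dual to an edge of $C$ at $A_i$ of weight $c$ and primitive direction $(p,q)$ is the lattice vector $c(-q,p)$, whose first coordinate is $-cq$; also $d(E)$ is vertical and has first coordinate $0$. Going once around $\partial d(A_i)$ the first coordinates sum to $0$, so the positive ones sum to the horizontal width $w_i$ of $d(A_i)$; but a boundary segment has positive first coordinate exactly when it is dual to an edge of $C$ at $A_i$ of negative vertical slope, so $w_i=\sum_e c_e|q_e|=m_i$.

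The step I expect to be the real obstacle is the orientation bookkeeping in this last paragraph — deciding which side of $d(E)$ is $d(A_i)$ and fixing the sign in the duality rule — together with the (routine but necessary) verification that in the perturbation step no mass of $C\cap\ell_\e$ escapes to points of $\bar E$ other than the crossings described; this is precisely where the hypothesis that $d(A_1),d(A_2)$ have no further vertical edges enters. As a cross-check, and perhaps as an alternative write-up, one can phrase everything through lattice mixed volumes: the stable intersection localized at a cell $\sigma\subseteq\bar E$ is the mixed volume of the local Newton polygons of $C$ and of $\ell$ at $\sigma$; the local Newton polygon of $\ell$ is always the vertical primitive segment, so the open edge $E$ contributes $0$ (its local Newton polygon $d(E)$ is vertical, hence parallel) while $A_i$ contributes the mixed volume of $d(A_i)$ with a vertical unit segment, which is exactly $w_i$; adding up gives $m$.
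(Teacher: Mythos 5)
Your argument is correct. The computational core coincides with the paper's: both proofs reduce the claim to the identity ``(sum of vertical components of the edges leaving $A_1,A_2$ upward) $=$ (sum over the downward edges, by balancing) $=$ (horizontal width of $d(A_1)\cup d(A_2)$, by duality).'' Where you differ is in how the stable intersection is extracted in the first place: the paper, consistently with its theme, performs the tropical modification along a line $L$ whose horizontal ray contains $E$ and reads the stable intersection off as the total weight of the vertical legs of $m_L(C)$ lying under $\bar E$, using the balancing condition in $\TT^3$; you instead work directly from the definition of stable intersection, translating the horizontal line to height $Y_E+\varepsilon$ and counting transversal crossings with multiplicity $c_e|q_e|$. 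Your route is more elementary (no modification machinery needed) and makes explicit where the hypothesis that $d(A_1),d(A_2)$ carry no further vertical edges is used — namely, to guarantee that the only horizontal edge of $C$ meeting $\bar E$ is $E$ itself, so that all the mass of $C\cap\ell_\varepsilon$ converging to $\bar E$ comes from the non-horizontal edges at $A_1,A_2$; the paper leaves this point implicit. Your worry about the sign convention in the $90^\circ$-rotation duality is not a real obstacle: whichever rotation one fixes, the positive first coordinates along $\partial d(A_i)$ sum to the horizontal width and, by balancing, the upward and downward sums agree, so the conclusion is insensitive to the choice. The mixed-volume reformulation in your last paragraph is a fine cross-check but is not needed.
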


\begin{proof} Refer to Example~\ref{ex_singularexample} and Figure~\ref{fig_stable}.
Let $L$ be a tropical line containing $E$, and assume that the vertex of $L$ does not coincide
with the endpoints of $E$. Making the modification along the line $L$ we see that 
the sum $S$ of vertical components
of edges going upward from $A_1,A_2$ equals the sum $m$ of the $y$-components of them. 

Then the sum of the vertical components of the downward edges equals \(S\) by
the balancing condition for tropical curves. The sum of the \(y\)-components of the edges incident to a vertex \(v\) is exactly the width, in the \((1,0)\)-direction, of the face \(d(v)\) dual to \(v\) in the Newton
 polygon.   
\end{proof}

\begin{definition}[cf. \cite{sturm}]
\label{def_stableintersection2}
For each connected component $X$ of $A\cap B$, we define {\it the
  local stable intersection of $A$ and $B$ along $X$} as
$A\cdot_XB=\sum_i m(P_i^t)$ for $t$ close to zero, where the sum
runs over $\{i|
\lim_{t\to 0} P_i^t\in X\}$. For a point $Q\in A$, we define $A\cdot_QB$ as
$A\cdot_XB$, where $X$ is the connected component of $Q$ in the
intersection $A\cap B$.
\end{definition}

\begin{prop}[\cite{brugalle} Proposition 3.11, see also \cite{MR2900439} Corollary 12.12]
\label{th_stableintersection}
For two algebraic curves \(C_1,C_2\subset(\KK^*)^2\) we consider a {\it compact} connected component $X$ of the intersection $\Trop(C_1)\cap \Trop(C_2)$. Then, $$\sum_{x\in C_1\cap C_2, \Val(x)\in X} m(x) = \Trop(C_1)\cdot_X\Trop(C_2)$$ where $m(x)$ is the multiplicity of the point $x$ in the intersection $C_1\cap C_2$.
\end{prop}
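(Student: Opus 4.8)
The plan is to reduce the statement to the local case and then to use the modification construction in exactly the way Proposition~\ref{prop_stableint} and the momentum identity were used in the examples. First I would localize: since $X$ is a compact connected component of $\Trop(C_1)\cap\Trop(C_2)$, there is a neighborhood $U$ of $X$ in $\TT^2$ containing no other component of the intersection, and by properness of $\Val$ on algebraic varieties the points $x\in C_1\cap C_2$ with $\Val(x)\in U$ are finite in number and all lie over $X$. So it suffices to compare $\sum_{\Val(x)\in X} m(x)$ with the local stable intersection number $\Trop(C_1)\cdot_X\Trop(C_2)$, which by Definition~\ref{def_stableintersection2} is computed by a generic small translation.

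Next I would pass to the modification. Write $C_2$ as the zero locus of $f\in\KK[x,y]$, form the modified plane $m_{\Trop(C_2)}\TT^2\subset\TT^3$ as the tropical limit (equivalently non-Archimedean amoeba) of the graph $z=f(x,y)$, and let $C_1'=\Val(\{(x,y,f(x,y))\mid (x,y)\in C_1\})$ be the induced modification of $\Trop(C_1)$. The key point is that the points of $C_1\cap C_2$ correspond precisely to points of the lifted curve lying in the hyperplane $z=-\infty$, i.e.\ to the vertical legs of $C_1'$ going to $-\infty$ in the $Z$-direction, and the weight of such a leg emanating over a point $Q\in X$ equals $\sum_{\Val(x)=Q} m(x)=\ord_{z}$ of the restriction of $f$; this is the same computation as in Example~\ref{ex_big} and Example~\ref{ex_singularexample}. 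Summing over $X$, the total weight of the downward vertical legs of $C_1'$ sitting over $X$ equals $\sum_{\Val(x)\in X} m(x)$.

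Finally I would identify that total weight with the stable intersection number. By the balancing condition for the tropical curve $C_1'$ at the part of $C_1'$ lying over $X$, the total weight of the downward vertical legs over $X$ equals the sum of the $Z$-components of the \emph{other} edges of $C_1'$ leaving the region over $X$; and each edge of $\Trop(C_1)$ in direction $(p,q)$ passing through $X$ lifts, away from the tangency locus, to an edge of $C_1'$ in direction $(p,q,q)$, so its contribution to that $Z$-sum is governed exactly by $|p_1 q_2-p_2 q_1|$-type lattice determinants against the direction of $\Trop(C_2)$. Comparing this with the definition of $m(P)$ for transverse intersections and with the generic-translation description of $\Trop(C_1)\cdot_X\Trop(C_2)$ — equivalently, invoking the momentum identity on $C_1'$ over $X$, which is the computation carried out in Figure~\ref{moment} — yields $\Trop(C_1)\cdot_X\Trop(C_2)$, completing the proof.

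The main obstacle I expect is the bookkeeping in the last step: making precise that the ``graph of $f$ on $C_1$'' is genuinely a tropical curve (with the right weights) over the neighborhood $U$, so that the balancing condition may be applied there, and checking that no weight is lost at the vertices of $\Trop(C_1)$ lying inside $X$. This is where one must know that the tropicalization of a plane curve, modified along a hypersurface, is again balanced with the expected multiplicities; granting that (it follows from the lifting/amoeba description recalled above, or from Proposition~\ref{prop_brug}), the rest is the lattice-geometry identity already exhibited in the worked examples.
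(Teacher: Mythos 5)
Your proposal is correct and follows essentially the same route as the paper: modify $\TT^2$ along $C_2$ via the graph of its defining equation, identify the points of $C_1\cap C_2$ over $X$ with the downward vertical legs of $m_{C_2}C_1$ (with weights equal to intersection multiplicities), and equate their total weight with $\Trop(C_1)\cdot_X\Trop(C_2)$. The paper's proof is just a two-line version of this; your extra care about localization and balancing fills in details the paper leaves implicit.
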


\begin{proof}
Consider the equation $F(x,y)=0$ of $C_2$. We construct the non-Archimedean amoeba $m_{C_2}C_1$ of $\{(x,y,F(x,y)\mid (x,y)\in C_1)\}$. Then $$\Trop(C_1)\cdot_X\Trop(C_2)$$ is the sum of the weights of the vertical legs of $m_{C_2}C_1$ under $X$. The latter is equal to $\sum_{x\in C_1\cap C_2, \Val(x)\in X} m(x)$.
\end{proof}

\begin{remark}
For non-compact connected components of the intersection we only have an inequality $\sum_{x\in C_1\cap C_2, \Val(x)\in X} m(x) \leq  \Trop(C_1)\cdot_X\Trop(C_2)$. It can be upgraded to equality by considering intersections of $C_1,C_2$ ``at infinity'', in the appropriate compactification of the torus, see \cite{shaw}.
\end{remark}

For further discussion about the multiplicity of singular points in the tropical world, see \cite{kalinin}.

\subsection{Tropical Weil reciprocity law and the tropical momentum map}
\label{weil}

The aim of this section is to establish another fact in tropical geometry, which is the tropical analogue of a classical fact in algebraic geometry.

Weil reciprocity law can be formulated as
\begin{thm}
\label{th_weil}
Let \(C\) be a smooth compact complex curve, and let \(f,g\) be nonzero
meromorphic functions on \(C\) whose divisors have disjoint supports. Then
\[
\prod_{x\in C} f(x)^{\ord_x(g)}
=
\prod_{x\in C} g(x)^{\ord_x(f)}.
\]
Here, if \(u\) is a local parameter at \(x\) and
\[
f=a u^n+\cdots,\qquad a\ne0,
\]
then \(\ord_x(f)=n\).
\end{thm}

The products in this theorem are finite because $\ord_gx, \ord_fx$ equal to zero everywhere
except finite number of points.

If \(f=a u^n+\cdots\) and \(g=b u^m+\cdots\) in a local parameter \(u\) at \(x\),
define the tame symbol
\[
[f,g]_x=(-1)^{nm}\frac{a^m}{b^n}.
\]
Then Weil reciprocity is equivalently
\[
\prod_{x\in C}[f,g]_x=1.
\]

\begin{example}
\label{ex_polynom}
If $C=\CC P^1$ and $f,g$ are polynomials \begin{equation}
f(x)=A\prod_{i=1}^n (x-a_i),g(x)=B\prod_{j=1}^m(x-b_j)
\end{equation} with $a_i\ne b_j$, then 
\[
\prod_{x\in C} g(x)^{\ord_x(f)}
=
\prod_{i=1}^n g(a_i)
=
B^n\prod_{i=1}^n\prod_{j=1}^m(a_i-b_j),
\]
and
\[
\prod_{x\in C} f(x)^{\ord_x(g)}
=
\prod_{j=1}^m f(b_j)
=
A^m\prod_{j=1}^m\prod_{i=1}^n(b_j-a_i).
\]
The remaining factor is exactly compensated by the tame symbol at infinity.

\end{example}

Khovanskii studied various generalizations of the Weil reciprocity law and reformulated them in terms
of logarithmic differentials \cite{weil1,Khovanskii1,weil_kh}. The final
formulation is for toric surfaces and resembles a tropical balancing condition, which is indeed the case.
The symbol $[f,g]_x$ is related with Hilbert character and link coefficient, and is generalized by Parshin residues. Mazin  \cite{mazin} treated them
 in geometric context of resolutions of singularities.

In order to study what happens after a modification we consider
a tropical version of Weil theorem. We need to define tropical meromorphic function and $\ord_fx$, see also \cite{mikhalkin2006tropical}.

\begin{definition}[\cite{mikh2}]
For a piecewise-linear function \(f\) with integer slopes on a tropical curve \(C\), define
\[
\ord_x(f)=\sum_{e\ni x} \operatorname{slope}_e(f),
\]
where the slopes are taken in the outgoing directions from \(x\). Points with
\(\ord_x(f)>0\) are zeros and points with \(\ord_x(f)<0\) are poles.
\end{definition}

\begin{example}
The function $f(x)=\max(0,2x)$ on $\TT P^1 =\{-\infty\}\cup\RR\cup\{+\infty\}$ has a zero of multiplicity $2$ at $0$, i.e. $\ord_f(0)=2$, and a pole of multiplicity $2$ at $+\infty$, i.e. $\ord_f(+\infty)=-2$.
\end{example}

\begin{thm}[A proof is in Section~\ref{sec_proofweil}]

\label{th_tropicalweil} Let \(C\) be a compact tropical curve and let \(f,g:C\to\RR\) be finite-valued
piecewise-linear functions with integer slopes. Then
\[
\sum_{x\in C} f(x)\ord_x(g)=\sum_{x\in C} g(x)\ord_x(f).
\] 
\end{thm}

Word-by-word repetition of the reasoning in Example~\ref{ex_polynom} proves this theorem in the case $C=\TT P^1$. Indeed, after adding a constant and, if necessary, a linear term, a one-variable tropical polynomial can be written in the form
\[
c+\sum_i \max(A_i,X),
\]
where the \(A_i\) are its tropical roots, counted with multiplicity.

For the general statement there are many proofs (and one can proceed by studying piece-wise
linear functions on a graph, see \cite{KalininMagin2025TropicalWeil}), we give here the shortest one (and also using tropical modifications), via so-called {\it tropical momentum}.

Suppose that $C$ is a planar tropical curve. We list all the edges $E_1,\dots,E_k$ of $C$, suppose that their directions are given by
primitive (i.e. non-multiple of another integer vector) integer vectors
$v_1,\dots,v_k$. Suppose that each edge $E_i$ has weight $m_i$ and if $E_i$ is infinite, then the direction of
$v_i$ is chosen to be ``to infinity'' (there are two
choices and for us the orientation of $v_i$ will be important). Let $A$ be a point on the plane. Let us choose a point 
$B_i\in E_i$ for each $i=1,2,\dots,k$. 
\begin{definition}[\cite{Yoshitomi:kq}]
Tropical momentum of an edge $E_i$ of $C$ with respect to the point $A$ is given by $\rho_A(E_i) =m_i\cdot \det(v_i,AB_i)$.
\end{definition}

\begin{definition}
For a point $A\in\RR^2$ define $\rho_A(C)$ as $\sum_{E}
\rho_A(E)$ where $E$ runs over all infinite edges of $C$.
\end{definition}

\begin{lemma}[\cite{Yoshitomi:kq}]
\label{lem_momentum}
If a tropical curve $C$ has only one vertex, then $\rho_A(C)=\sum_{i=1}^k
\rho_A(E_i) =0$ for any point $A$ on the plane.
\end{lemma}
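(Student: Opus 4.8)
The plan is to reduce the statement to the bilinearity of the determinant together with the balancing condition at the unique vertex. The two ingredients I would establish first are: (i) that $\rho_A(E_i)$ is independent of the auxiliary point $B_i\in E_i$ chosen in its definition, and (ii) that a one-vertex curve is nothing but a fan of rays through a common point, so that all the relevant vectors can be taken based at that point.

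For (i), suppose $B_i,B_i'\in E_i$. Since $E_i$ has direction $v_i$, we have $B_i'-B_i=s\,v_i$ for some $s\in\RR$, hence $AB_i'=AB_i+s\,v_i$ and
\[
\det(v_i,AB_i')=\det(v_i,AB_i)+s\,\det(v_i,v_i)=\det(v_i,AB_i).
\]
Thus $\rho_A(E_i)=m_i\det(v_i,AB_i)$ does not depend on the choice of $B_i$, and we are free to make a single convenient choice for all edges at once.

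For (ii), let $O$ be the unique vertex of $C$. Every edge $E_i$ is then an infinite ray emanating from $O$ in the primitive direction $v_i$, the ``to infinity'' orientation of $v_i$ being exactly the outward orientation from $O$; the balancing condition at $O$ therefore reads $\sum_{i=1}^{k} m_i v_i=0$. Choosing $B_i=O$ for every $i$, the vector $AB_i=AO$ is the same for all $i$, so by bilinearity of the determinant
\[
\rho_A(C)=\sum_{i=1}^{k} m_i\det(v_i,AO)=\det\!\Bigl(\sum_{i=1}^{k}m_i v_i,\;AO\Bigr)=\det(0,AO)=0.
\]

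I do not expect a genuinely hard step here: the only point that deserves a word of justification is the compatibility in (ii) between the ``to infinity'' orientation fixed in the definition of the tropical momentum and the outward orientation at $O$ used to phrase the balancing condition. Once that is noted, the displayed computation closes the argument.
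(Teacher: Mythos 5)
Your proof is correct and rests on the same two ingredients as the paper's: the balancing condition $\sum m_i v_i=0$ at the unique vertex and the linearity of the determinant. The only (cosmetic) difference is that the paper normalizes by moving $A$ to the vertex after showing the sum is independent of $A$, whereas you move the base points $B_i$ to the vertex and pull the sum inside the determinant directly; the computation is the same.
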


\begin{proof} First of all, $\sum_{i=1}^k
\rho_A(E_i) $ does not depend on the point $A$, because if we translate $A$ by some vector $u$, then each summand in $\rho_A(C)$ will
change by $\det(v_i,u)\cdot w_i$ and the sum of changes is zero because of the balancing condition. Therefore, $\rho_A(C)=0$, because we can place $A$ in the vertex of
this curve.
\end{proof}

\begin{lemma}[Moment condition in \cite{Yoshitomi:kq}, also it appeared in \cite{Mikhalkin:2015kq} under the name Tropical Menelaus Theorem]
\label{lem_moment1}
For an arbitrary plane tropical curve $C\subset \RR^2$ and any point $A\in\RR^2$ the equality $\rho_A(C) =0$ holds.
\end{lemma}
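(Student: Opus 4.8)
The plan is to deduce the statement from the single-vertex case, Lemma~\ref{lem_momentum}, by a star decomposition of $C$. For each vertex $v$ of $C$, let $\Star(v)$ be the one-vertex tropical curve whose edges are the rays emanating from $v$ in the directions of the edges of $C$ adjacent to $v$, each extended to infinity and carrying the same weight as the corresponding edge of $C$. The balancing condition at $v$ for $C$ is literally the balancing condition for $\Star(v)$, so Lemma~\ref{lem_momentum} applies and gives $\rho_A(\Star(v))=0$ for every vertex $v$ and every $A\in\RR^2$.

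Next I would sum these identities over all vertices of $C$ and bookkeep how each edge contributes. The key elementary point is that $\rho_A(E_i)=m_i\det(v_i,AB_i)$ does not depend on the auxiliary point $B_i$: moving $B_i$ along the line spanned by $v_i$ changes $AB_i$ by a multiple of $v_i$, which the determinant annihilates. Hence the momentum of a ray depends only on its weight, its oriented primitive direction, and the affine line carrying it. An unbounded edge (leg) of $C$ has a unique finite endpoint $v$, so it occurs in $\Star(v)$ and in no other star, there with its original direction and weight. A bounded edge $E$ with endpoints $v,w$ occurs in exactly two stars: in $\Star(v)$ as a ray on the line through $v$ and $w$ oriented away from $v$, and in $\Star(w)$ as a ray on the \emph{same} line oriented away from $w$, i.e.\ with the opposite primitive direction. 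By the independence just noted, these two contributions are $m_E\det(u,AB)$ and $m_E\det(-u,AB)$ for one common point $B$ on that line, so they cancel.

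Combining, $\sum_v \rho_A(\Star(v))$ reduces to $\sum_{E\text{ a leg of }C}\rho_A(E)=\rho_A(C)$, while every term on the left is zero; therefore $\rho_A(C)=0$, as claimed.

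The only thing requiring care — more a matter of convention than a genuine obstacle — is checking that the orientations of the two rays coming from a bounded edge in its two adjacent stars are honestly opposite, and that it is precisely the independence of $\rho_A(E_i)$ from $B_i$ that lets one compare the two ray-momenta on a single affine line; once these are pinned down, the cancellation of all bounded edges is immediate and the lemma follows.
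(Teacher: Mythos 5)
Your argument is correct and is essentially the paper's own proof: decompose $C$ into the stars of its vertices, apply the single-vertex Lemma~\ref{lem_momentum} to each, and observe that every bounded edge contributes to exactly two stars with opposite primitive directions, so those momenta cancel and only the legs survive. The extra care you take with the independence of $\rho_A(E_i)$ from the choice of $B_i$ is a welcome elaboration but does not change the route.
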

\begin{proof}
Note that the total momentum
for a curve is the sum of momenta for all vertices, because a summand
corresponding to an edge between two vertices will appear two times
with different signs. So, this lemma follows from the previous one.  
\end{proof}

\begin{definition}
We consider a balanced tropical curve \(C\subset\RR^3\) with finitely many unbounded edges. Let $E_1,E_2,\dots,E_n$ be its infinite edges. We define the momentum of $C$ with respect to $A$ as 
\[
\rho_A(C)=\sum_{i=1}^n m_i\, v_i\times\overrightarrow{AB_i}.
\]
 where $\times$ stands for the vector product, $v_i$ is the primitive vector (in the direction ``to infinity'') of an edge $E_i$, $m_i$ is the weight of $E_i$, and $B_i$ is a point on $E_i$.
\end{definition}

\begin{prop}[Generalized Tropical Menelaus theorem]
\label{prop_moment1}
For a balanced tropical curve \(C\subset\RR^3\) and any point \(A\in\RR^3\), one has
\[
\rho_A(C)=0.
\]
\end{prop}

\begin{proof} We proceed as in the planar case. We show that $\rho_A(C)$ does not depend on $A$ because of the balancing condition. Indeed, if $C$ has only one vertex, then the claim is trivial. In general case we sum up the tropical momentum by all the edges, and the terms for internal edges appear two times with different signs, which concludes the proof.
\end{proof}

An application of this theorem can be found in Example~\ref{ex_infl}.

\subsection{Application of the tropical momentum to modifications}
\label{momentum}

\begin{example}
Consider the graph of a tropical polynomial $$f(X)=\max(A_0,A_1+X,\dots,A_n+nX).$$ Suppose that we know only $A_0$ and $A_n$. Definitely, the positions of the tropical roots of $f$ may vary, being dependent on the coefficients of $f$. Nevertheless, we can apply the tropical Menelaus theorem for the graph of $f$. We will calculate the momentum with respect to $(0,0)$. This graph has one infinite horizontal edge with momentum $A_0$ and one edge of direction $(1,n)$ with the momentum $-A_n$. Also, for each root $P_i\in\TT$ of $f$ we have an infinite vertical edge with the momentum $-P_i$. Application of the tropical moment theorem gives us $\sum P_i = A_0-A_n$, which is simply a tropical manifestation of Vieta's theorem --- the product of the roots $p_i$ of a polynomial $\sum_{i=1}^n a_ix^i$ is $a_0/a_n$.
\end{example}

\begin{example}
\label{ex_weil}
Let $C$ be a planar tropical curve, such that all its infinite edges are horizontal or vertical. Consider first and second coordinates $X,Y$ on $C$ as two tropical functions. Denote these functions $f=X,g=Y$. Then, Theorem~\ref{th_tropicalweil} says that $\rho_{(0,0)}C = 0$, because a tropical root of $f$ is represented by a horizontal leg of $C$, and the value of $g$ at this root is exactly the $Y$-coordinate of this leg. 
\end{example}

On Figure~\ref{fig_stable}, \ref{fig_inflection}, {\it a priori} we know only the sum of the directions of the edges with
endpoints on the modified curve. We know that there is no
horizontal infinite edges (in these examples). In general, it is
possible, if the intersection of our two tropical curves is non-compact. Therefore by Weil theorem (or tropical Menelaus Theorem, it is
the same) we know the sum of $X$-coordinates of the vertical infinite
edges. Thus the sum of the weights for red vertical edges equals the sum of the vertical
components of the black edges in the Figure~\ref{moment}.

\begin{lemma}
If we modify along a horizontal line, then the total vertical slope of the infinite vertical edges lying over this line equals the total horizontal width of the region in the Newton subdivision dual to the corresponding connected component of the intersection of the line with the curve.
\end{lemma}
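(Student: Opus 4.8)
The plan is to reduce this statement to Proposition~\ref{prop_stableint} combined with the tropical Menelaus theorem (Lemma~\ref{lem_moment1}). The key point is that a modification along a horizontal line $L$ sends a tropical curve $C\subset\TT^2$ to a curve $m_L(C)\subset\TT^3$, and the vertical infinite edges of $m_L(C)$ sitting under a compact connected component $X$ of $C\cap L$ carry, by Proposition~\ref{th_stableintersection} and its proof, total weight equal to the local stable intersection $C\cdot_X L$. So the content of the lemma is really the identity $C\cdot_X L = (\text{total horizontal width of the region dual to }X)$.

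First I would set up the dual picture. A compact connected component $X$ of the intersection of $C$ with the horizontal line $L$ is a union of horizontal edges and vertices of $C$; its dual in the subdivision of the Newton polygon of $C$ is a connected region $R$, a union of cells and edges, and the edges of $C$ inside $X$ are dual to the vertical edges of the boundary and interior of $R$. The ``total horizontal slope of $R$'' means the lattice width of $R$ in the $(1,0)$-direction, equivalently $\sum_E \ell(E)$ over the vertical edges $E$ in the subdivision that meet $R$ (each counted once), where $\ell$ is lattice length — this is exactly the quantity $m$ appearing in Proposition~\ref{prop_stableint} when $X$ is a single edge with its two adjacent cells.

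Next I would run the balancing/Menelaus argument already used in the proof of Proposition~\ref{prop_stableint}, but for a general connected component rather than a single edge. Perform the modification along $L$ and look at $m_L(C)$: every edge of $C$ emanating upward from a vertex of $X$ in direction $(i,j)$ with $j>0$ becomes an edge of $m_L(C)$ in direction $(i,j,j)$, hence contributes $j\cdot(\text{weight})$ to the total vertical slope going up out of $X$; summing the $j$-components over all such edges gives precisely the lattice width of $R$ in the $(1,0)$-direction, since by tropical duality the sum of $y$-components of edges leaving a vertex $v$ upward equals the horizontal width of the dual cell $d(v)$, and telescoping over the cells of $R$ leaves only the boundary contribution. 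By the balancing condition applied to $m_L(C)$ along the (compact) subcomplex lying over $X$, the total vertical slope going up equals the total vertical slope going down, i.e. the total weight of the vertical infinite legs under $X$; this is the desired equality. (Alternatively, one invokes the generalized tropical Menelaus theorem, Proposition~\ref{prop_moment1}, applied to $m_L(C)$ localized over $X$, which is the route the surrounding text emphasizes.)

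The main obstacle is bookkeeping at the boundary of the region $R$: one must check that the telescoping of horizontal widths of the cells dual to vertices of $X$ really collapses to the lattice width of $R$, i.e. that interior vertical edges of the subdivision that lie strictly inside $R$ contribute to two adjacent cells with cancelling effect on the ``net upward flow'' bookkeeping, while the edges of $C$ leaving $X$ \emph{downward} (which dualize to the bottom boundary of $R$) account for the vertical legs and not for any spurious extra width. Once the correspondence ``upward edges of $X$ $\leftrightarrow$ cells of $R$, their $j$-components $\leftrightarrow$ horizontal widths'' is stated cleanly, the rest is the same two-line balancing argument as in Proposition~\ref{prop_stableint}.
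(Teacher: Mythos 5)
Your proposal is correct and follows essentially the same route as the paper: the paper proves this lemma by declaring it ``the same as for Proposition~\ref{prop_stableint}'', whose proof is exactly your argument --- upward edges in direction $(i,j)$ become edges in direction $(i,j,j)$ after the modification, the sum of their $j$-components equals the horizontal width of the dual faces, and the balancing condition transfers this total to the downward vertical legs. Your extra care about telescoping interior vertical edges of the dual region is a reasonable elaboration of what the paper leaves implicit for the general (multi-vertex) connected component.
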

\begin{proof}
The argument is the same as in the proof of Proposition~\ref{prop_stableint}.
\end{proof}

\begin{lemma}
\label{lem_positionsingular}
If the stable intersection of $\Trop(C)$ with a horizontal line $L$ is equal to $m$, $\Trop(C)\cap L$ is compact, and there exists a point $q\in C$ with $\mu_q(C)\geq m$ and 
$\Val(q)\in\Trop(C)\cap L$, then we can uniquely recover the position of $\Val(q)$. 
\end{lemma}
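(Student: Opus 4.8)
The plan is to combine the two restricting conditions that have already appeared in the excerpt: the momentum (tropical Menelaus / Weil) identity from Proposition~\ref{prop_moment1}, and the fact (Proposition~\ref{th_stableintersection} together with the lemma just above) that after modifying $C$ along the horizontal line $L$, the total vertical slope of the infinite vertical legs lying under the compact component $X = \Trop(C)\cap L$ equals $m$. First I would modify $C$ along $L$, producing a curve $m_L(C)\subset\TT^3$ whose infinite edges split into two groups: the "old" edges, which are the images of the infinite edges of $C$ (their directions are determined by $C$ and $L$ alone, since modification of the plane is canonical), and the "new" vertical legs of direction $(0,0,-1)$, which sit under $X$ and carry total weight exactly $m$ by the lemma. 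The point $q$ with $\mu_q(C)\geq m$ forces \emph{all} of that vertical weight to be concentrated at a single leg, over the point $\Val(q)$: indeed $A\cdot_X B = m$ is the total stable intersection along $X$, and by Proposition~\ref{th_stableintersection} the weights of the vertical legs record the multiplicities $m(x)$ of the actual intersection points $x\in C\cap L$ with $\Val(x)\in X$; a point of multiplicity $\geq m$ exhausts the budget $m$, so there is exactly one vertical leg, of weight $m$, located at $\Val(q)$.

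Next I would pin down \emph{where} that single leg is by applying the generalized tropical Menelaus theorem (Proposition~\ref{prop_moment1}) to $m_L(C)\subset\TT^3$ with respect to a convenient base point $A$, say the unique vertex of the tropical plane or the origin. The sum of the momenta of the old infinite edges is a fixed vector $\mathbf{w}$ computed purely from $\Trop(C)$ and $L$. The only unknown contribution is the single vertical leg at the unknown point $(X_0,Y_0,\ast)$: its momentum with respect to $A$ is $m\cdot\big((X_0,Y_0,\ast)-A\big)\times(0,0,-1)$, which, since the leg is vertical, depends only on $(X_0,Y_0)$ and equals $m\cdot(Y_0-A_2,\,-(X_0-A_1),\,0)$ up to sign. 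Setting $\mathbf{w}+m\cdot(Y_0-A_2,-(X_0-A_1),0)=0$ solves uniquely for $(X_0,Y_0)$, hence for $\Val(q)$, since the $X$- and $Y$-components of the momentum equation are independent linear equations in $X_0$ and $Y_0$ respectively and $m\neq 0$.

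The main obstacle I anticipate is bookkeeping rather than conceptual: one must be careful that there really are no \emph{horizontal} infinite edges appearing among the new legs (this is where compactness of $X=\Trop(C)\cap L$ enters — as noted in the discussion before Lemma~\ref{lem_positionsingular}, a non-compact intersection component could leak weight into edges going off to infinity in a non-vertical direction, and then the momentum equation would have extra unknowns), and that the direction conventions for the infinite edges of $m_L(C)$ are consistent so that $\mathbf{w}$ is computed with the right signs. Granting compactness, all new legs are vertical of direction $(0,0,-1)$, the total new weight is $m$, it is concentrated at one point by the multiplicity hypothesis on $q$, and Menelaus then determines that point; so the recovery of $\Val(q)$ is unique. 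This is exactly the mechanism illustrated in Example~\ref{ex_singularexample} and Figure~\ref{fig_oldexample}, left.
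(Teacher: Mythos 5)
Your proposal is correct and follows essentially the same route as the paper: modify along the line, observe that the multiplicity hypothesis together with the stable-intersection bound $m$ forces a single vertical leg of weight $m$ sitting under $\Val(q)$, and then locate that leg via the generalized tropical Menelaus theorem (equivalently, the balancing condition, since all other infinite edges are known). The one point worth making explicit is that the lift $l$ of $L$ used for the modification must be chosen to pass through $q$, so that $q$ is an actual intersection point contributing multiplicity at least $m$ to the vertical legs.
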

\begin{proof}
Indeed, consider a lift $l$ of $L$ which passes through $q$. If we make the modification along $l$, we obtain a leg of $m_L(\Trop(C))$ under $\Trop(C)\cap L$ of the weight at least $m$. Since the stable intersection $\Trop(C)\cap L$ is equal to $m$, there is only one leg under $\Trop(C)\cap L$. Therefore, the tropical momentum theorem gives us the unique position of this leg (of course, it is evident via balancing --- we know all the infinite edges of a tropical curve except one, therefore the coordinates of this last edge can be found via the balancing condition).
\end{proof}

\begin{prop}[see \cite{brugalle}, Proposition 4.5]
\label{prop_brug}
For each compact connected component $C$ of $C_1\cap C_2$ the sum of $X$ coordinates (and the sum of $Y$-coordinates) of the valuations of the intersection points of $C_1,C_2$ with valuations in $C$ can be computed from the local behavior of \(C_1\) and \(C_2\) near \(C\).
\end{prop}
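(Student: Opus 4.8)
The plan is to convert the statement into one about a modification of $C_1$ along $C_2$ and then read off the two sums from the generalized tropical Menelaus theorem (Proposition~\ref{prop_moment1}), applied not to the whole modified curve but to the piece of it sitting over a small neighborhood of $C$.

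First I would fix a defining equation $F$ of a lift $C_2'$ of $C_2$ and form the modification $m_{C_2}(C_1)\subset\TT^3$, the non-Archimedean amoeba of $\{(x,y,F(x,y)):(x,y)\in C_1'\}$. As in the proof of Proposition~\ref{th_stableintersection}, over each point $P$ of $C$ this curve has a single vertical leg $\ell$ running to $Z=-\infty$, whose weight is $\sum_{x\in C_1'\cap C_2',\ \Val(x)=P}m(x)$. Hence the quantity to be controlled, $\sum_{x\in C_1'\cap C_2',\ \Val(x)\in C}m(x)\,X(\Val(x))$, is exactly $\sum_\ell \mathrm{wt}(\ell)\,X_\ell$, the sum over these legs of weight times $X$-coordinate, and likewise with $Y$ in place of $X$.

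Next I would localize. Since $C$ is a connected component of $\Trop(C_1)\cap\Trop(C_2)$, I can choose a small neighborhood $U\supset C$ in $\TT^2$ with generic boundary, missing every vertex of $\Trop(C_1)$ and $\Trop(C_2)$ and every other point of $\Trop(C_1)\cap\Trop(C_2)$, so that $\Trop(C_1)$ meets $\overline U$ only along the edges incident to $C$. Let $K$ be the part of $m_{C_2}(C_1)$ lying over $\overline U$ under the projection $p:\TT^3\to\TT^2$, and let $\widetilde K$ be obtained from $K$ by prolonging to infinity every edge of $K$ that reaches $p^{-1}(\partial U)$. Over a generic exit point $q$ of an edge of $\Trop(C_1)$ the curve $m_{C_2}(C_1)$ is locally a single segment in the direction $(v_1,v_2,w)$ through the point $(q_1,q_2,\Trop(F)(q))$, with $(v_1,v_2)$ the primitive direction of that edge of $\Trop(C_1)$ and $w$ the slope of $\Trop(F)$ along it; so prolonging creates no new vertex, $\widetilde K$ is a genuine balanced tropical curve, and its infinite edges are precisely the vertical legs $\ell$ under $C$ together with these prolonged ``exit'' edges, whose directions and base points are visibly read off from $\Trop(C_1)$ and $\Trop(C_2)$ near $C$.

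Finally I would apply Proposition~\ref{prop_moment1} to $\widetilde K$ with base point $A=(0,0,0)$. A vertical leg $\ell$ has direction $(0,0,-1)$ and contributes $\mathrm{wt}(\ell)\,(Y_\ell,-X_\ell,0)$ to $\rho_A(\widetilde K)$, so the second coordinate of the identity $\rho_A(\widetilde K)=0$ reads $\sum_\ell \mathrm{wt}(\ell)\,X_\ell=\big(\text{second coordinate of the total momentum of the exit edges}\big)$, whose right-hand side only involves data near $C$; the first coordinate yields $\sum_\ell \mathrm{wt}(\ell)\,Y_\ell$ in the same way. The main obstacle is the bookkeeping in the localization step: checking that $\widetilde K$ really is a balanced tropical curve, that no vertical leg of $m_{C_2}(C_1)$ escapes through $p^{-1}(\partial U)$ (true because every such leg descends over a point of $C\subset U$), and that the exit edges depend only on the germs of $\Trop(C_1)$ and $\Trop(C_2)$ at $C$. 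Granting this, the conclusion follows immediately from Proposition~\ref{prop_moment1}.
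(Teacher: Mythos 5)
Your argument is essentially the paper's own: the paper disposes of this proposition in a single sentence, applying the generalized tropical Menelaus theorem (Proposition~\ref{prop_moment1}) to the modification $m_{C_2}(C_1)$ and identifying the sought sums with the momenta of the vertical legs going to $-\infty$ in the $Z$-coordinate. Your version additionally carries out the localization to the single compact component $C$ (cutting $m_{C_2}(C_1)$ over a neighborhood $U$ and prolonging the exit edges to obtain a balanced curve $\widetilde K$), a step the paper leaves implicit; this is done correctly and is the only substance you add to the one-line argument in the text.
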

Indeed, the tropical Menelaus theorem gives the sum of the momenta of all legs of \(m_{C_2}C_1\) whose \(Z\)-coordinate tends to \(-\infty\).

\subsection{Proof of the tropical Weil theorem}
\label{sec_proofweil}
We now prove the tropical Weil theorem. Given two tropical meromorphic functions \(f,g\)
on a tropical curve \(C\), we would like to define the map
\[
C\to\TT^2,\qquad x\mapsto (f(x),g(x)),
\]
and then apply the tropical Menelaus theorem; cf. Example~\ref{ex_weil}. Here we have to use tropical modification, because {\it a priori}, the image of tropical curve under the map $(f,g):C\to\TT^2$ with $f,g$ tropical meromorphic functions, is {\bf not} a plane tropical curve: balancing condition is not satisfied near zeroes and poles of $f$ and $g$, we need to add legs there. Formally, we have to consider a modification $C'$ of $C$, and then extend $f,g$ on it. Then, if the roots and poles of $f,g$ will be only at $1$-valent vertices, then the image of the map $C'\to \TT^2$ will be a planar tropical curve.

\begin{definition}
We call a triple $(C,f,g)$ of a tropical curve $C$ and two meromorphic function $f,g:C\to \TT P^1$ on it {\it admissible} if all the zeroes and poles of $f,g$ are located at different one-valent vertices of $C$. 
\end{definition}

\begin{lemma}
Given a triple $(C,f,g)$ of a tropical curve $C$ and two meromorphic functions \(f,g:C\to \TT P^1\) on it, we can always extend the functions \(f,g\) on the modification $D=m_{div(g)}m_{div(f)}C$ of $C$, such that the obtained triple $(D,f',g')$ is admissible and  \begin{equation}
\sum\limits_{x\in C} f(x)\operatorname{ord}_x(g) -
\sum\limits_{x\in C} g(x)\operatorname{ord}_x(f)
=
\sum\limits_{x\in D} f'(x)\operatorname{ord}_x(g') -
\sum\limits_{x\in D} g'(x)\operatorname{ord}_x(f').
\end{equation}
\end{lemma}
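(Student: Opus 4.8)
The plan is to build $D$ in two stages, $C\to C_1=m_{div(f)}C\to D=m_{div(g_1)}C_1$ (where $g_1$ is the pull‑back of $g$ to $C_1$, so that $m_{div(g_1)}C_1$ is what the statement denotes $m_{div(g)}m_{div(f)}C$), and to track the zeros and poles of the two functions through each stage. First I would record that $\pi_1\colon C_1\to C$ is a degree‑one tropical morphism and that $C_1$ is the graph of $f$, completed at each zero or pole of $f$ by the vertical leg forced by the balancing condition (pointing down in the new coordinate at a zero, up at a pole). I would then take $f_1$ to be the last coordinate on $C\times\TT$ restricted to $C_1$, so $f_1=f\circ\pi_1$, and set $g_1=g\circ\pi_1$, and check the two local facts that make the construction work: (i) along a new vertical leg the projection $\pi_1$ is constant, so $g_1$ is constant there and the leg contributes nothing to the balancing defect of $g_1$ — hence the zeros and poles of $g_1$ on $C_1$ are exactly the $\pi_1$‑lifts of those of $g$ on $C$, with unchanged orders; and (ii) at a vertex of $C_1$ over a zero or pole of $f$ the extra leg carries precisely the missing slope of $f_1$, so that $f_1$ is now balanced there and its zero/pole of order $k$ has migrated to the weight‑$k$ one‑valent tip of that leg (zeros and poles of $f$ already sitting at one‑valent vertices of $C$ are simply lifted).

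Next I would repeat the construction with $g_1$ in the role of $f$: form $D=m_{div(g_1)}C_1$, pull $f_1$ and $g_1$ back along the degree‑one projection $D\to C_1$ to get $f'$ and $g'$ (so $g'=g\circ\pi$ for $\pi\colon D\to C$), and apply the same two observations to conclude that all zeros and poles of $g'$ now sit at one‑valent vertices of $D$, while those of $f'$ — attached to legs that are vertical in the \emph{previous} coordinate, along which $f'$ is locally constant — are left undisturbed at their one‑valent vertices. This shows $f',g'$ extend to $D$ with every zero and pole at a one‑valent vertex; to complete the admissibility claim I would then argue that these one‑valent vertices are pairwise distinct, which I would handle by first arranging $div(f)$ and $div(g)$ to be disjoint on $C$ (a common zero or pole of $f$ and $g$ on $C$ would otherwise persist as a common zero or pole of $f'$ and $g'$ on $D$).

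For the displayed identity I would use that, by the above, the zeros and poles of $g'$ on $D$ are in order‑preserving bijection, via $\pi$, with those of $g$ on $C$, and that $f'(y)=f(\pi(y))$ at any such point $y$ (the tip of a leg projects to the point of $C$ it was attached to); hence $\sum_{y\in D}f'(y)\,\ord_{g'}y=\sum_{x\in C}f(x)\,\ord_g x$, and symmetrically $\sum_{y\in D}g'(y)\,\ord_{f'}y=\sum_{x\in C}g(x)\,\ord_f x$. Subtracting the two identities gives the claim. The point to stress is that the second identity holds even though $f'$ has \emph{moved} its zeros and poles away from where $f$ had them: all that is used is that the value of $g'$ at a relocated zero/pole of $f'$ equals the value of $g$ at its $\pi$‑image, and that the order is preserved.

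The main obstacle will be the bookkeeping in the middle step — verifying that the leg added at a zero or pole of order $k$ is exactly the weight‑$k$ vertical leg the balancing condition prescribes, so that the migrated singularity is again of order $k$ at a one‑valent vertex, and that the second modification introduces no fresh balancing defect for $f'$ — together with the accompanying argument that separates coinciding zeros and poles of $f$ and $g$ so that admissibility really holds on $D$. Once these are pinned down, everything else is a routine local computation.
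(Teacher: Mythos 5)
Your construction is essentially the paper's proof: the paper likewise adds, at each zero or pole $p$ of $f$, an infinite leg on which $f$ is extended linearly with slope $-\ord_f p$ (so the singularity migrates to the one-valent tip) and $g$ is extended by the constant $g(p)$, then repeats the procedure along the divisor of $g$, and the displayed identity follows exactly as you argue from the constancy of the other function along each new leg. One small correction: your parenthetical claim that a common zero or pole of $f$ and $g$ would persist as a common singularity of $f'$ and $g'$ on $D$ is false --- your own two-stage construction sends the $f$-singularity to the tip of the first leg and the $g$-singularity to the tip of the second, which are distinct one-valent vertices --- so the preliminary step of ``arranging $div(f)$ and $div(g)$ to be disjoint'' (not a legitimate move in any case, since $f$ and $g$ are given) is unnecessary.
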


\begin{proof}
We perform tropical modifications of $C$ in order to have all zeros and poles of $f,g$ at the vertices of valency one. Namely, for a point $p$ such that $p$ is in the corner locus of $f$ we add to $C$ an infinite edge $l$ emanating from $p$. We define $f$ on $l$ as the linear function with integer slope such that the sum of slopes of  $f$ over the edges from $p$ is zero, i.e. $f(x)=f(p)-\ord_fp\cdot x$ where $x$ is the coordinate on $l$ such that $x=0$ at $p$ and then $x$ grows. We define $g$ on this edge as the constant $g(p)$. We perform this operation for all roots and poles of $f$. Then, we do the same procedure along the divisor of $g$. If \(p\) belongs to both divisors, we add one infinite leg \(l\) and extend \(f\) and \(g\)
linearly on \(l\) so that, under the map \((f,g)\), this leg has primitive direction
proportional to \((\ord_p(f),\ord_p(g))\). With this choice the balancing defects of
both \(f\) and \(g\) at \(p\) are moved to the new one-valent end.
\end{proof}

\begin{proof}[Proof of the tropical Weil theorem]
By the lemma above we may suppose that the triple $(C,f,g)$ is admissible.
Now $f,g$ define a map $C\to \TT^2$ and the image is a tropical curve $D=\{(f(x),g(x))|x\in C\}$: indeed, at every vertex of the image the balancing condition is satisfied; all one-valent vertices go to infinity by one of the coordinates. Now it is easy to verify that $g(x)\cdot \ord_f(x)$ is a term in the definition of the momentum of $D$ with respect to $(0,0)$: if $\ord_f(x)\ne 0$, then $D$ has a horizontal infinite edge, and its $Y$-coordinate is $g(x)$. Finally,
\begin{equation}
\label{eq_momentum}
\sum\limits_{x\in C} f(x)\operatorname{ord}_x(g) -
\sum\limits_{x\in C} g(x)\operatorname{ord}_x(f)
= \rho((0,0))=0.\qedhere
\end{equation}
\end{proof}

\begin{remark}
If $f,g$ come as limits of complex functions $f_i,g_i$, having $\ord_{f_i}(p_i)=k,\ord_{g_i}(p_i)=m, \lim p_i=p$, then the tropical limit of the family $\{(f_i(x),g_i(x))|x\in C_i\}$ will not have vertical (with multiplicity $k$) and horizontal (with multiplicity $m$) leg from a common divisor point $p$ of $f$ and $g$, but will have one leg of direction $(k,m)$. Nevertheless, because of the tropical Menelaus theorem or the balancing condition, it has no influence on Eq.~\eqref{eq_momentum}.
\end{remark}

\subsection{Stable intersections, realizable intersections}
\label{sec_realizability}

One may ask if the only obstruction for a
modification is the generalized tropical Menelaus theorem. As we will see in this section, not at all.

Let us start with a variety $M'\subset \KK^n$ and a hypersurface
$N'\subset \KK^n$ and their non-Archimedean
amoebas $M, N\subset \TT^n$. We suppose that the intersection of $M$ with a tropical hypersurface $N$ is
not transverse. We ask: what can the non-Archimedean amoeba of the intersection \(M'\cap N'\) look like?

First of all, as a divisor on $M$ (or $N$) it should be rationally equivalent to the divisor of the stable
intersection of $M$ and $N$, as it has been shown for the case of curves in
\cite{Morrison:2014mz}. In the general case it follows from the
results of this section.

It is easy to find additional necessary conditions. Let us restrict the equation \(F\) of \(N'\) to \(M'\) and tropicalize the resulting data. We obtain a tropical function \(f=\Trop(F)\) whose behavior in a neighborhood of \(N\cap M\) is fixed, whereas its behavior on all of \(M\) is not.

\subsection{A realizability obstruction for tropical intersections}
The next result is one of the main points of the paper.  It gives an obstruction
to realizability of tropical intersection divisors.  Stable intersection gives
the divisor obtained when no cancellation occurs in the lifted equations.
However, if the algebraic lifts have cancellations, the valuation of the
intersection may move inside the stable-intersection component.  The theorem
below says that this movement is one-sided: the resulting divisor must be
subordinate to the stable intersection.

\begin{definition}
\label{def_frozen}
Let \(M\) be an abstract tropical variety, and let \(\iota:M\to\TT^n\) be an embedding realizing \(M\) as a tropical subvariety of \(\TT^n\). Let \(f\) be a tropical function on \(\TT^n\). We define the pullback \(\iota^*(f)\) on \(M\) by \(\iota^*(f)=f\circ\iota\). We say that \(\iota^*(f)\) is frozen at \(p\in M\) if, in a neighborhood of
\(\iota(p)\) in \(\TT^n\), the tropical polynomial \(f=\Trop(F)\) is represented
by a single monomial. Equivalently, \(\iota(p)\) is not in the corner locus of
\(f\).
\end{definition}

Note that, in general, the slopes of \(f\) along \(\iota(M)\) do not coincide with the slopes of \(\iota^*(f)\) on \(M\); see Example~\ref{ex_stability}.
From now on we consider tropical functions which have frozen points, the motivation is explained in the following definition.

\begin{definition}
\label{def_subordinate}
A principal divisor $P$ on an abstract tropical variety $M$ is called {\it subordinate to} a principal
divisor $Q$ (we write $P\prec Q$), which is defined by a tropical meromorphic function $f$ with frozen points, if $P$ can
be defined by a tropical meromorphic function $h$, which satisfies $h\leq f$ everywhere and $h=f$ at the points where $f$ is frozen. 
\end{definition}
Thus a subordinate divisor is obtained by replacing the tropical function
\(f\) by a smaller tropical meromorphic function \(h\), while keeping \(h=f\)
on the frozen locus.  In the intersection problem, \(f\) is the pullback of the
tropical equation defining the stable intersection, and \(h\) records the
possible cancellations in a lift.

\begin{remark}
On a compact connected tropical curve, after fixing the frozen locus, the condition is invariant under adding the same constant to both defining functions. In higher-dimensional or noncompact settings, one should regard the frozen locus and the chosen defining function \(f\), up to the appropriate additive ambiguity, as part of the data.
\end{remark}

\begin{example}
\label{ex_stability}
Refer to Example~\ref{ex_singularexample}. Let us start from the tropical curve $M$ given by 
\begin{align*}
\max(3+X+3Y,&3+X+2Y,3+X+Y,3+X,2+2X+2Y,2+2X+Y,\\
&2+2X,1+Y,1,3X-2)
\end{align*} 
and a horizontal line $N$ given by $\max(Y,0)$. We want to understand the valuations of possible intersections of $M'\cap N'$ where $\Trop(M')=M,\Trop(N')=N$.

We can choose the equation for $M'$ in the form 
\begin{equation*}
F(x,y)=(t^{-1}+\alpha_0+t^{-1}y) + x(t^{-3}+\alpha_1+t^{-3}y^3)+x^2(t^{-2}+\alpha_2+t^{-2}y)+ x^3(t^{2}+\alpha_3),
\end{equation*} where $\val(\alpha_0)<1,\val(\alpha_1)<3, \val(\alpha_2)<2,\val(\alpha_3)<-2$. It is clear, that for any $A\leq 1, B\leq 3, C\leq 2$ by choosing $y$ of the form $1+\alpha, \val(\alpha)<0$ and then with careful choice for $\alpha_1,\alpha_2,\alpha_3$ we can obtain  (see Figure~\ref{fig_39})
\begin{equation*}
f(X)=\Val(F(x, 1+\alpha))=\max(A,B+X,C+2X,-2+3X).
\end{equation*}

In this example the set $X\geq 4$ on $N$ is frozen for $\Trop(F)$, that is why we have a choice for the constant term $A$. If the intersection is a compact set (as in Example~\ref{ex_big}), then the constant term is also fixed.
Note that for the stable intersection our tropical function is $\Trop(F)(X,0)=\max(1,3+X,2+2X,-2+3X)$ and $f(X)\leq\Trop(F)(X,0)$ at every point.

\end{example}

\begin{figure}
\begin{tikzpicture}[scale=0.7]
\draw[red,very thick] (-2,0)--(-3.5,-0.5)--(-4,-0.5);
\draw[red,very thick] (-3.5,-0.5)--(-3.5,-3);
\draw[red,very thick] (4,0)--(3.5,-0.5)--(3.5,-3);
\draw[red,very thick] (3.5,-0.5)--(0.5,-0.5)--(1,0);
\draw[red,very thick](0.5,-0.5)--(-0.66,-1.33)--(-2,0);
\draw[red,very thick](-0.66,-1.33)--(-0.66,-3);
\draw[blue, thick] (-4,0.05)--(6,0.05);
\draw[very thick] (-2,0)--(4,0)--(6,1);
\draw (0,0) node[above] {$2$};
\draw (2,0) node[above] {$1$};

\draw[very thick] (-4,1)--(-2,0);
\draw[very thick] (2,2)--(1,0);

\begin{scope}[scale=0.6, xshift=430]
\draw[dashed](-3,0)--(4,0);
\draw[dashed](0,-3)--(0,5);

\draw (-3,-5/6)--(-2.5,-5/6)--(-2/3,5/12-2/6)--(3.5,8.5/2)--(5,13/2);
\draw (-2.5,-5/6)--(-2.5,-3);
\draw (-2/3,5/12-2/6)--(-2/3,-3);
\draw (3.5,8.5/2)--(3.5,-3);

\end{scope}

\end{tikzpicture}
\caption{Refer to Example~\ref{ex_stability}. On the left figure we see the vertical part of the modification of the curve given by $F(x,y)=(-t^{-1}+t^{5/3}+t^{-1}y)+x(t^{-3}y -(t^{-3}+t^{-5/6})+x^2(t^{-2}y-t^{-2}+t^{-3/2})+x^3t^2$ along the line $y=1$. On the right figure, we see the tropicalization of the restriction of $F$ on $y=1$, i.e., the function  $\max(3X-2,2X+1.5,X+5/6,-5/3)$. }
\label{fig_39}
\end{figure}
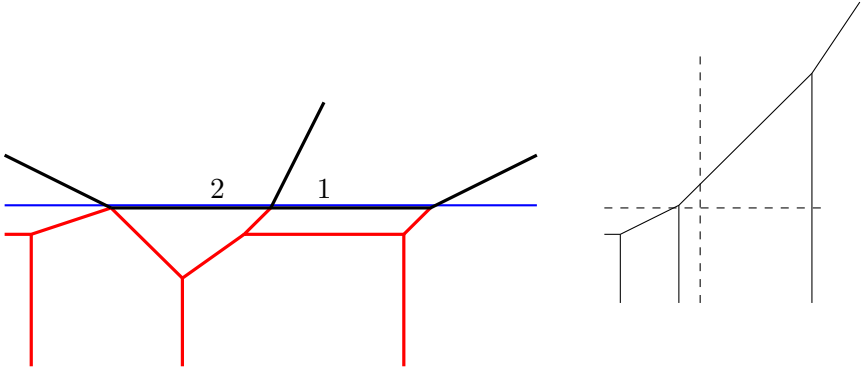

Now we prove the following theorem, whose proof consists only of a reformulation in the language of tropical modifications and staring to the pictures; see Remark~\ref{rem_restriction} as an illustration.
 
Fix an abstract tropical variety $M$, its tropical embedding $\iota:M\to\TT^n$, and a tropical hypersurface $N\subset\TT^n$, given by a tropical polynomial $f$. As we know, the pullback of the divisor of the stable intersection of $\iota(M)$ with $N$ is given by $\iota^*(f)$. Note that we supply the function $\iota^*(f)$ with frozen points, according to Definition~\ref{def_frozen}.

\begin{thm}[Subordination obstruction for lifted intersections]
\label{th_subordinate}
Let \(M'\subset(\KK^*)^n\) be a pure-dimensional algebraic variety, and let
\(N'\subset(\KK^*)^n\) be the hypersurface defined by \(F=0\). Assume that
\(F|_{M'}\) is not identically zero on any irreducible component of \(M'\), and
that \(N'\cap M'\) is a proper Cartier divisor on \(M'\).

Let \(M=\Trop(M')\), let \(N=\Trop(N')\), and suppose that \(M\) is identified
with an abstract tropical variety via an embedding
\[
\iota:M\to\TT^n.
\]
Let \(f=\Trop(F)\), and equip \(\iota^*(f)\) with the frozen locus from
Definition~\ref{def_frozen}. Then the pullback to \(M\) of the divisor
\[
\Val(N'\cap M')
\]
is subordinate to the stable-intersection divisor
\[
\operatorname{div}(\iota^*f).
\]
In other words, every realizable tropical intersection divisor lies below the
stable intersection in the sense of Definition~\ref{def_subordinate}.
\end{thm}
Consequently, if a divisor rationally equivalent to the stable intersection is
not subordinate to it, then it cannot be obtained as the valuation of an
intersection of algebraic lifts with the prescribed tropicalizations.

\begin{proof} Recall that $f=\Trop(F), f:\TT^n\to\TT$. Let us make the modification of $\TT^n$ along $N$. Look at the image $m_f(M)$ of $M$ under this map. Clearly, the valuation of the set $\{(x,F(x))|x\in M'\}$ belongs to $m_f(M)$. Define
\[
h(X)=\max\{\val(F(x))\mid x\in M',\ \Val(x)=X\}.
\]
Then the graph of \(h\) is contained in the tropical modification \(m_f(M)\), and
\(h(X)\le \iota^*f(X)\) for all \(X\in M\).

At points \(X\in M\) where a unique monomial of \(F\) realizes the maximum
defining \(f\), no cancellation is possible, hence \(h(X)=\iota^*f(X)\).
These are precisely the frozen points in the sense used above.

  Therefore the pullback of $\iota^*(\Trop(F|_{M'}))$ is at most $\iota^*(f)$ everywhere, and $\iota^*(\Trop(F|_{M'}))=\iota^*(f)$ at the points where $\iota^*(f)$ is frozen. Therefore the divisor of \(h\) on \(M\) is subordinate to the divisor of
\(\iota^*(f)\), which is the pullback of the stable intersection divisor.
\end{proof}
The graph of \(h\) can lie below the graph of \(\operatorname{Trop}(F)|_M\),
because cancellations may occur after substituting points of \(M'\) into \(F\), which is invisible when we consider $\Trop(F)$ as a function on $\TT^n$. Recall that if the image of the valuation map $\val$ is $\TT$, then we know that $\Trop(F)(X)$ is the maximum of $\val(F(x))$ with $\Val(x)=X$. On the other hand, $\Trop(F_{M'})(X)$ for $X\in M$ is the maximum of $\val(F(x))$ with $\Val(x)=X$ and $x\in M'$. Clearly, the latter maximum is at most the former maximum.

\begin{corollary}[Non-subordination implies non-realizability]
\label{cor_nonsubordinate}
Let \(D\) be a divisor on \(M\) rationally equivalent to the stable-intersection
divisor \(\operatorname{div}(\iota^*f)\). If \(D\) is not subordinate to
\(\operatorname{div}(\iota^*f)\), then \(D\) cannot be the valuation of
\(N'\cap M'\) for algebraic lifts \(M'\) and \(N'\) with the prescribed
tropicalizations.
\end{corollary}

\begin{example}
The following example illustrates the obstruction in Theorem~\ref{th_subordinate}.
Refer to Figure~\ref{fig_subordinate}. We have the stable intersection $A+B+C+D$ of the curves given by $\max(0,Y)$ and $\max(0,X,2X-1,3X-3,4X-6,X+Y,2X+Y-1,3X+Y-3)$. The divisor \(A+B'+C'+D\) is rationally equivalent to the stable-intersection
divisor \(A+B+C+D\).  However, the rational function moving
\(A+B+C+D\) to \(A+B'+C'+D\) forces the corresponding tropical function to exceed
\(\iota^*(f)=\max(0,X,2X-1,3X-3,4X-6)\) on the frozen region \(X\ge4\).  Hence this divisor is not
subordinate to the stable intersection, and Theorem~\ref{th_subordinate}
implies that it is not realizable as the valuation of an intersection of lifts. This example shows that our conditions are stronger than in \cite{Morrison:2014mz}.
\end{example}

Note that to realize a divisor subordinate to the stable intersection as a tropicalization of intersection one needs to tune the coefficients of $F$ to have all necessary cancellations. For the case of a planar curve this can be done along trees, \cite{sukenaga2023tropical}. If a curve contains cycles then new conditions appear.

\begin{figure}[htbp]

\begin{center}
\begin{subfigure}[b]{0.45\textwidth}
\begin{tikzpicture}
\draw[blue, thick] (-1,0)--(4,0);
\draw[thick] (-1,1)--(0,0)--(0,-2);
\draw[thick] (1,1)--(1,-2);
\draw[thick] (2,1)--(2,-2);
\draw[thick] (0,0)--(3,0);
\draw[thick] (4,1)--(3,0)--(3,-2);
\draw (0,0) node[above]{$A$};
\draw (1,0) node[above right]{$B$};
\draw (2,0) node[above left]{$C$};
\draw (3,0) node[above]{$D$};
\draw (0,0) node {$\bullet$};
\draw (1,0) node {$\bullet$};
\draw (2,0) node {$\bullet$};
\draw (3,0) node {$\bullet$};

\draw[red] (0.7,0) node {$\bullet$};
\draw[red] (2.3,0) node {$\bullet$};
\draw[red] (0.7,0) node[above]{$B'$};
\draw[red] (2.3,0) node[above]{$C'$};
\end{tikzpicture}
\end{subfigure}
\begin{subfigure}[b]{0.45\textwidth}
\begin{tikzpicture}[scale=0.7]

\draw[blue, thick] (-1,0)--(4,0);
\draw[thick] (-1,1)--(0,1)--(1,2)--(2,4)--(3,7)--(4,11);
\draw[red, dashed] (0.7,0)--(0.7,1.7)--(2.3,4.9)--(2.3,0);

\draw[thick] (0,1)--(0,0);
\draw[thick] (1,2)--(1,0);
\draw[thick] (2,4)--(2,0);
\draw[thick] (3,7)--(3,0);

\end{tikzpicture}
\end{subfigure}
\end{center}
\caption{The divisor $A+B'+C'+D$ is not realizable as the valuation of intersection of the lifts of the curves defined by $\max(0,Y)$ and $\max(0,X,2X-1,3X-3,4X-6,X+Y,2X+Y-1,3X+Y-3)$. On the right we see the function which carries this rational equivalence out, it is bigger than the function for the stable intersection and so violates the subordination condition of  Theorem~\ref{th_subordinate}.}
\label{fig_subordinate}
\end{figure}
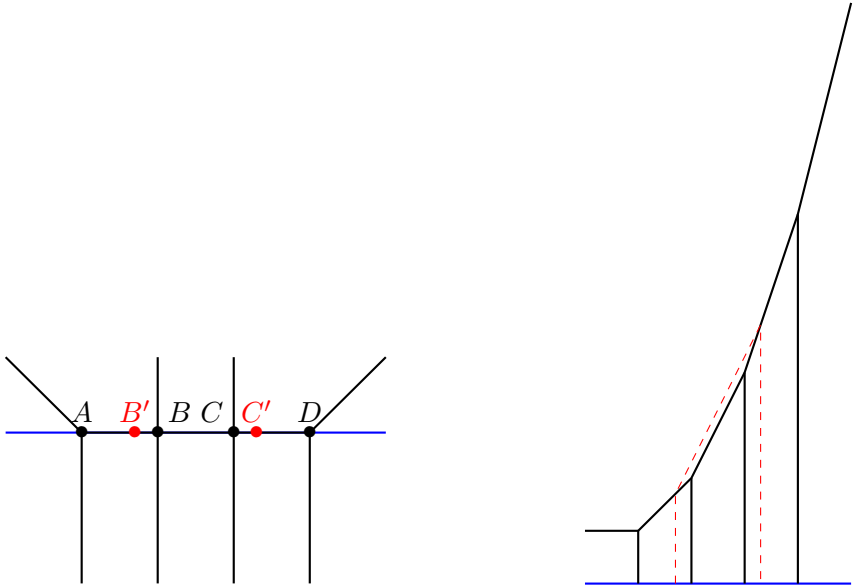

\subsection{Interpretation with chips}
In the case of curves we can represent a divisor on a curve as a collection of chips. In the last subsection we proved Theorem~\ref{th_subordinate} which says that any realizable intersection is subordinate to to the stable intersection. So, one might ask for a method to produce {\bf all} the subordinate divisors to a given divisor (though, it is possible that not all of them are realizable as the valuation of an intersection).

Let us start with the stable intersection of two tropical curves, this intersection is a divisor (collection of chips) on the first curve. Then we allow the following movement: pushing continuously together two neighbor chips on an edge, with equal speed. We do not allow the opposite operation --- when we slide continuously two points apart from each other (so, the operation in Figure~\ref{fig_subordinate} does not provide a subordinate to $A+B+C+D$ divisor).  

This corresponds to the following: we look at the modification of the first curve along the second curve, given by a tropical polynomial $\Trop(F)$. By decreasing the coefficients of the monomials in $\iota^*(f)$ on $C$, one by one, we can obtain any function less than $\iota^*(f)$.

This reasoning can be applied to the intersection of any two tropical
varieties if one of them is a complete intersection. We restrict the equations of
the second variety on the first, which gives us a stable intersection,
then we have a situation similar to Definition \ref{def_subordinate}, and, as above, by decreasing the coefficients of pullbacks of tropical polynomials we can obtain all the subordinate to the stable intersection divisors.

\begin{example}
Consider the function $\max(0,X-1,2X-3)$. This function defines the divisor on $\TT^1$ with two chips, one at $X=1$ and the second at $X=2$. When we decrease the coefficient in the monomial $X-1$, these chips are moving closer. For example, the function $\max(0,X-1.3,2X-3)$ defines the divisor with chips at the points with the coordinates $1.3$ and $1.7$.
\end{example}

\begin{remark}
\label{rem_infinitechips}
Note that if the stable intersection is not compact, then we need to add a chip at infinity (or to treat infinity as a point with one chip). Now let $A,B$ be two chips, $A$ is at infinity and $B$ is on the leg of $V$ going to $A$. Then, the operation ``pushing together $A,B$''  moves only $B$ towards infinity (and $A$ remains unchanged at infinity). This corresponds to decreasing the constant term in 
Example~\ref{ex_stability}.
\end{remark}

\example{Big order tangency with only two degrees of freedom}. (\cite{brugalle}, Lemma 3.15). We consider a line $y-\alpha x-\beta=0, \val(\alpha)=0,\val(\beta)=0$ and a curve $a_0+a_1y+a_2xy^l=0$ with $\val(a_0)=0,\val(a_1)=0,\val(a_2)=0$.

Clearly, we have non-transversal intersection, we can perform substitution $y=\alpha x+\beta$, that gives $a_0+a_1(\alpha x+\beta)+a_2x(\alpha x+\beta)^l = (a_0+a_1\beta)+x(a_1\alpha+a_2\beta^l)+\sum_{i=2}^{l+1}  a_2 \beta^{l+1-i}\alpha^{i-1} x^i$. The contraction may only appear at two coefficients: the coefficient before $x$ and the constant term. So we have only two degrees of freedom. Let us present the intersection points as chips. By changing the coefficients $\alpha,\beta,a_i$ we change the intersection, so we can look at how the chips move. So, when $\val(a_0+a_1\beta)<\val(a_0)$, this corresponds to the movement in Remark~\ref{rem_infinitechips}, one chip moves towards infinity while the others do not move at all. Also we can push two chips together by decreasing the valuation of $a_1\alpha+a_2\beta^l$. Note that $l-2$ chips at the point $(0,0)$ are unmovable.

Here we have only two degrees of freedom because we have only two degrees of freedom in the equation $a_0+a_1y+a_2xy^l=0$. 

\begin{problem}
\label{pr_intersection}
Motivated by the above example, we give the following suggestions, which seem to be reasonable in the question of the realizability of intersections. Suppose that we have a tropical line and a tropical curve defined by a tropical polynomial $f$. While defining $\iota^*(f)$ we keep track of all the monomials $m_i$ of $f$ and then in Definition~\ref{def_subordinate} we allow $g$ to contain only monomials of the type $\iota^*(m_i)$.
I.e. if $f=\max (a_{ij}+i X+ iY)$, then we only allow $g$ of the type $\max (c_{ij}+\iota^*(x^iy^j))$ with $c_{ij}\leq a_{ij}$ which coincides with $f$ on the frozen set of $f$. We explain why we concentrate on the case when one of the curves is a line. Normally, we can perturb the coefficients of the equations of both curves. If one of the curves is a line, we can always suppose that its equation is fixed.
For the general case, one should expect that apart from $\iota^*(f)$ on $M$ we can find another thin structure, which is responsible for the deformation of the equation of $M$ being immersed to $\TT^n$, something like ``a pull-back of the normal bundle'', coming from the map $\iota$. As we mentioned before, this program can be carried for the case of planar curves \cite{sukenaga2023tropical}.
\end{problem}

\example{Difference between a leg of big weight and a root.}
\label{ex_tangent} Take the curve $C$ given by $F=0$ where 
$F(x,y)=1+(t^{-1}+t)x+(2t^{-1}+t^2+t^4)x^2+(t^3+2t^4)x^3+t^{-1}xy+2t^{-1}x^2y$
and intersect it with the line given by $t^5x+y+1=0$.

Performing the tropical modification along the line we see that the resulting curve has a leg of weight three going to $-\infty$. But it is not a root of multiplicity three! If we substitute $y=-1-t^5x$ to the equation, we will see that the obtained polynomial $1+tx+t^2x^2+t^3x^3$ has three roots with the valuation $1$, but they do not coincide.
But if we consider the curve $C'$ given by the equation $F=0, F(x,y)=1+(t^{-1}+3t)x+(2t^{-1}+3t^2+t^4)x^2+(t^3+2t^4)x^3+t^{-1}xy+2t^{-1}x^2y$, we see that $\Trop(C)=\Trop(C')$ and $C'$ has a tangency of order three with the line.

The same example can be constructed for a similar Newton polygon
$$\mathrm{ConvHull}\{(0,0)-(1,1)-(n,1)-(n+1,0)\},$$ where we also can obtain the tangency of the order $n+1$.

\begin{problem}
\label{pr_tangency}
Suppose that the intersection of a tropical line with a tropical curve is a segment. Is it always possible to make a modification in order to have a leg of the weight equal to the local stable intersection (Definition~\ref{def_stableintersection2})? If yes, is it always possible to find the coefficients for the equations in order to have a tangency of the order equal to the stable intersection? Also, we can ask this question for any two curves with non-transverse intersections.
\end{problem}

Due to combinatorial restrictions in tropical terms, sometimes we can see that it is impossible to have a singular point with high multiplicity on a curve. Note that even in this case, we can have a leg of big multiplicity after the modification; see Example~\ref{ex_tangent}.

%

\subsection{Digression: a generalization of the tropical momentum}
\label{sec_digression}
A natural generalization of the vector product (or cross product) in $\RR^3$ 
\begin{equation}
(x_1,y_1,z_1)\times (x_2,y_2,z_2)
=
(y_1z_2-z_1y_2,\ z_1x_2-x_1z_2,\ x_1y_2-y_1x_2)
\end{equation} is the following. Given $k$ vectors $v_1,v_2,\dots,v_k\in\RR^n, k\leq n$ we consider the vector consisting of all \(k\times k\) minors of the \(k\times n\) matrix whose rows are \(v_1,\dots,v_k\). We call this vector of minors {\it generalized cross product of $v_1,v_2,\dots,v_k$}.

Consider a tropical variety $V^k\in\TT^n, k<n$. Let us choose a basis in each face of $V$ of codimension one and zero, i.e. for a face $F$ we choose a basis in the lattice associated with the integer affine structure of this face. For each face $G$ of codimension one in $V$ and the faces $F_1,F_2,\dots,F_l$ of codimension zero, containing $G$, we choose vectors $v_G(F_i)$ which participate in the balancing condition along $G$. Now we can define the sign $s_G(F)\in\{+1,-1\}$ to be $+1$ if the basis in $G$ with added vector $v_G(F)$ at the last place gives the same orientation in $F$ as the previously chosen basis in $F$, and $-1$ otherwise.

\begin{definition}
Let $\mathfrak G(V)$ be the abelian group generated formally by all the faces $G$ of $V$ of codimension one. Now we will describe relations in it. For a face $F\subset V$ of maximal dimension define $m(F)\in \mathfrak G(V)$ to be the sum $\sum_{G\subset F}s_G(F)\cdot G$. For each bounded face $F\subset V$ of maximal dimension we add the relation $m(F)$  to $\mathfrak G(V)$.
\end{definition}

\begin{example}
Compare Definition~\ref{def_generalizedmoment} with the proof of Lemma~\ref{lem_moment1}. For the case of planar tropical curve $C$ the group $\mathfrak G(V)$ is generated by all the vertices of $C$. Then, each internal edge of $C$ gives the relation that its ends are equal. Therefore, in that case, the group $\mathfrak G(V)$ is $\ZZ$ with generator $\mathfrak 1$, and for each unbounded $F$, we have $m(F)=\mathfrak 1$.
\end{example}

\begin{definition}
\label{def_generalizedmoment}
Let $A$ be a point in $\TT^n$. Pick a face $F$ of $V$ of codimension zero and let $B$ be a point in $F$. Then, define $r_A(F)$ as the generalized cross product of the vector $AB$ and the vectors in the basis in $F$. Note that $r_A(F)$ does not depend on $B$. Finally, define 
\begin{equation}
\rho_A(F)=r_A(F)\otimes_{\QQ}m(F)\in \RR^{\binom{n}{k+1}}\otimes_{\QQ}\mathfrak G(V).
\end{equation}
\end{definition}

\begin{prop}
For any point $A\in\TT^n$ we have $\sum_F \rho_A(F)=0$, where $F$ runs over all the unbounded faces of $V$ of the maximal dimension.
\end{prop}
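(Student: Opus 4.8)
The plan is to imitate the two-step argument behind Lemma~\ref{lem_moment1} and Proposition~\ref{prop_moment1}: first rewrite the sum so that it becomes a sum of purely \emph{local} contributions, one for each codimension-one face of $V$, and then kill every such contribution by the balancing condition, with the multilinearity of the generalized cross product playing the role that bilinearity of $\det$ played in the planar case.

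First I would discard the bounded maximal faces. By the definition of $\mathfrak G(V)$ the relation $m(F)=0$ is imposed for every bounded maximal face $F$, so $\rho_A(F)=r_A(F)\otimes_\QQ m(F)=0$ for such $F$, and therefore
\[
\sum_{F\text{ unbounded}}\rho_A(F)=\sum_{F}\rho_A(F),
\]
the right-hand sum now running over \emph{all} maximal faces of $V$. Next I would substitute $m(F)=\sum_{G\subset F}s_G(F)\,G$ and interchange the order of summation, using bilinearity of $\otimes$:
\[
\sum_{F}\rho_A(F)=\sum_{F}r_A(F)\otimes\left(\sum_{G\subset F}s_G(F)\,G\right)=\sum_{G}\left(\sum_{F\supset G}s_G(F)\,r_A(F)\right)\otimes G,
\]
where $G$ ranges over the codimension-one faces of $V$. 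Thus it suffices to prove, for each fixed $G$, the local identity $\sum_{F\supset G}s_G(F)\,r_A(F)=0$ in $\RR^{\binom{n}{k+1}}$.

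To prove the local identity I would fix $G$ and take the auxiliary point $B$ inside $G$; then $B\in F$ for every maximal $F\supset G$, and $r_A(F)$ does not depend on this choice. Writing $e_1,\dots,e_{k-1}$ for the chosen basis of $G$, the definition of $s_G(F)$ says that the chosen basis of $F$ is obtained from the list $(e_1,\dots,e_{k-1},v_G(F))$ by a lattice change of basis of sign $s_G(F)$; hence $s_G(F)\,r_A(F)$ is the generalized cross product of the ordered tuple $(AB,e_1,\dots,e_{k-1},v_G(F))$. Each entry of the generalized cross product is a $(k+1)\times(k+1)$ minor, hence multilinear in the rows, in particular linear in the last slot. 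Summing over all $F\supset G$ therefore pulls the sum into the last slot, and the balancing condition $\sum_{F\supset G}v_G(F)=0$ along $G$ turns the tuple into one containing the zero vector, so
\[
\sum_{F\supset G}s_G(F)\,r_A(F)=0 .
\]
Feeding this back into the previous display gives $\sum_{F}\rho_A(F)=0$, as claimed. As in the planar proof of Lemma~\ref{lem_moment1}, an equivalent way to see the local identity is to place $A$ on $G$, which makes every $r_A(F)$ occurring in the sum vanish; I prefer the multilinearity formulation because it handles bounded and unbounded $G$ uniformly and does not require moving $A$.

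The step I expect to be the main obstacle is precisely the identification of $s_G(F)\,r_A(F)$ with the generalized cross product of $(AB,e_1,\dots,e_{k-1},v_G(F))$: one must check that $s_G(F)$ is read off exactly as in Definition~\ref{def_generalizedmoment}, as the orientation discrepancy between the chosen basis of $F$ and the basis of $G$ completed by $v_G(F)$, and --- when $V$ carries nontrivial weights --- that the vectors $v_G(F)$ entering the balancing condition are normalized compatibly with the chosen lattice bases, i.e. the weights on the maximal faces are carried on the same side of the tensor throughout the computation. Once these orientation-and-weight conventions are pinned down, the rest is the routine multilinear-algebra manipulation above, with the balancing condition doing the real work.
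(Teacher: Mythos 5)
Your proof is correct and follows essentially the same route as the paper's: regroup the sum by codimension-one faces $G$, identify the local contribution at $G$ via the sign convention, and kill it with the balancing condition (the paper phrases this as showing independence of $A$, but the grouping-by-$G$ argument it sketches is exactly your local identity). Your version simply fills in the details the paper leaves as ``easy to see'' --- the vanishing of bounded faces via the relations in $\mathfrak G(V)$, the identification of $s_G(F)\,r_A(F)$ with a cross product based at $G$, and the multilinearity step --- and correctly flags the weight-normalization convention as the only point needing care.
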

\begin{proof}
The structure of the proof is the same as in Lemma~\ref{lem_momentum}. Let us only show that $\sum \rho_A(F)$ does not depend on the point $A$. Indeed, for each face $G\subset V$ of the codimension {\bf one} we consider the terms in  $\sum \rho_A(F)$ which contain $G$. It is easy to see, that thanks to the balancing condition along $G$ and our choice of signs, the sum of these terms is zero. 
\end{proof}

\begin{problem}
It seems that in a general situation, if $V$ is a tropical curve, then, again, $\mathfrak G(V)$ is $\ZZ$. On the other hand, it seems that if the dimension of $V$ is at least two, then $\mathfrak G(V)$ is freely generated by the unbounded faces of $V$ of codimension 1. Also, it would be nice to state an analog of the tropical Weil theorem in this new context and find its classical algebraic counterpart. 
\end{problem}

\section{Applications of a tropical modification as a method}

The term ``modification'' is used here in two related senses: first, as a well-defined {\it operation} on tropical varieties, as in \cite{mikh2}; and second, as a {\it method} for revealing the behavior of other varieties in an infinitesimal neighborhood of \(N\). Namely,
performing the modification of $M$ along $N\subset M$, we will see how $M$
changes, but the objects of codimension one in $M$ may behave
differently, depending on their behavior near $N$.  We will clarify this
distinction with examples.

\subsection{Inflection points}
\label{sec_inflection}
For a smooth plane curve, an inflection point is a smooth point at which the
tangent line has contact order at least \(3\). 
For a nonsingular real plane curve of degree \(d\), Klein's bound says that the
number of real inflection points is at most \(d(d-2)\), and this bound is sharp. The question attacked in
\cite{brugalle} is {\it which topological types of planar real algebraic
curves admit the maximal number of real inflection points?} Using Viro's patchworking method, a classical construction of algebraic curves, the authors construct examples. For this purpose, they study the possible local pictures of tropicalizations of inflection points.  The property to be verified is tangency, but intersection of
tropical curve with a tangent line at some point in most cases is not
transversal and it is not visible what is the actual order of
tangency. To see that, the authors do tropical modifications.

\subsection{The category of tropical curves}
\label{category}

For the treatment of this question with tropical harmonic maps see \cite{MR3375652, MR3320845}.
G. Mikhalkin (lectures, 2011) defines the morphisms in
the category of tropical curves as all the maps, satisfying the balancing and
Riemann-Hurwitz conditions (see, for example \cite{MR2866125}) and
subject to the {\it modifiability} condition:

\begin{definition}
A morphism $f:A\to B$ of tropical curves $A,B$ is said to be {\it modifiable} if for any modification $B'$ of $B$ there exists a
modification $A'$ of $A$ and a lift $f'$ of $f$ which makes the
obtained diagram commutative. 
\end{definition}

\begin{remark}
The modifiability condition ensures that a morphism came as a
degeneration of maps between complex curves (see Section~\ref{sec_hyperbolic}).
\end{remark}
\begin{proof}[Sketch of a proof] After a number of modifications we may have the map
$f'$ contracting no cycles. Then we construct a family of
complex curves $B_i$ such that $\lim B_i= B'$ in the hyperbolic sense (see
section \ref{sec_hyperbolic}). Finally, since $f'$ should come as a tropicalization of a covering, the complex
curves $A_i$ with $\lim A_i=A'$ are constructed as coverings $f_i:A_i\to B_i$ over $B_i$
where the combinatorics (ramification profiles, local degrees at points of tori contracting to tropical edges) of $f_i$ is prescribed by
$f'$. Balancing and Riemann-Hurwitz conditions follow. 
\end{proof}

\subsection{Realization of a collection of lines and (4,d)-nets}

\label{lines}
Which configuration of lines and points in $\mathbb P^2$ with given
incidence relation are possible? That is a classical question and even
for seemingly easy data the answer is often not clear. 

\begin{definition}
A $(4,d)$-net in $\PP^2$ is four collections by $d$ lines each of
them, such that exactly four lines pass through any point of intersection of two lines from
different collections, all these four lines are from different collections. 
\end{definition}

It is not clear whether a $(4,d)$-net exists for $d\geq 5$. In
\cite{2011arXiv1107.5530H} the authors proved, using tropical geometry, that there exists no
$(4,4)$-net.

One of the key ingredients is the following: if some net exists in the
classical world, then it exists in the tropical world. The problem appears: if
we have more than three tropical lines through a point on a plane,
then the intersection of two of them will be non-transversal.
However, thanks to modifications we always can have transversal
intersections, but probably in the space of bigger dimensions. For that,
we just do modifications along lines that have non-transversal
intersection. After these modifications, all intersections become transversal and the modified lines go to
infinity. Then, let us think about the following theorem, announced by the authors
of \cite{2011arXiv1107.5530H}, from the point of view of modifications:

\begin{problem} If some combinatorial data (required dimensions of  intersections of linear spaces) can be realized in
$\PP^k$ by a collection of linear spaces, does there exist a collection of tropical linear
spaces that realize the same combinatorial data in $\TT\PP^{k'}$ with
$k'\geq k$?
\end{problem}

Indeed, consider this realization in $\PP^k$. By passing to the tropical limit we obtain a tropical configuration, but the intersection dimensions may increase. Then, by doing the modifications, we want to repair the correct dimensions. Is it always possible to achieve?

\subsection{A point of big multiplicity on a planar curve}
\label{sec_multiplicitymodif}

In its most general form, this question could be formulated as
follows: given a cohomological class a of subvariety $S$ in a bigger variety, how many
singularities $S$ may have? For example, is it possible for a surface
of degree $4$ in $\CC P^4$ to have four double points and three two-fold
lines? 

There are several reasons why tropical geometry may provide tools
for such questions.
We will demonstrate these tools in the case of curves, where this deed has been already
done. Combinatorics of a planar tropical curve is encoded in the subdivision
of its Newton polygon. A singular point of multiplicity $m$
influences a part of the subdivision of area of order $m^2$ \cite{kalinin}, which is
in accordance with the order of the number of linear conditions $\big(\frac{m(m+1)}{2}\big)$ that a
point of multiplicity $m$ imposes on the coefficients of the curve's
equation.  For a general treatment of the tropical singularities, see 
\cite{kalinin}, \cite{nagatakalinin} and Chapters~1,2 in \cite{mythesis}.

In this section, we will only demonstrate how to apply modification techniques in this
problem, though we will obtain a weaker estimation -- but still of
order $m^2$.

The idea is the following: if a curve $C$ has a point $p$ of
multiplicity $m$, then for each curve $D$, passing through $p$, the
local intersection of $C$ and $D$ at $p$ is at least $m$.
The multiplicity of a local intersection of $C$ and $D$ can be estimated from above by studying
the connected component, containing $\Val(p)$, of
the stable intersection $\Trop(C)\cap \Trop(D)$ for the non-Archimedean amoebas of $C$ and $D$, see Theorem~\ref{th_stableintersection}.

Here is method: we take the polynomial $F$ defining $D$, and
use the fact that the image of $C$ under the map $m_D:(x,y)\to (x,y,F(x,y))$ intersects
the plane $z=0$ with multiplicity at least $m$. That
implies  {\bf existence} of a modification of $\Trop(C)$ along $\Trop(D)$, which has a leg of
weight $m$ going in the direction $(0,0,-1)$, exactly under the
point $\Val(p)$.
The latter modification is obtained just by  taking the non-Archimedean amoeba of $m_D(C)\subset m_D(\PP^2)$.

Now we reduce the problem to its combinatorial counterpart: is it
possible for two given tropical curves, that after the modification
along the second, the first curve will have a leg of weight
$m$, which projects exactly on the given point $\Val(p)$? After some work
with intrinsically tropical objects, we will get an estimate of this
point's influence on the Newton polygon of the curve.  

We are not going to consider this problem in the full generality, so
we will have a close look at the simplest interesting example. 
\begin{prop}
Suppose that a horizontal edge $E$ of a tropical curve $C$ contains a point $\Val(p)$ where $p$ is of multiplicity $m$ for a curve $C'$ such that $\trop(C')=C$. Denote by $d(E)$ the vertical edge in the dual subdivision of the Newton polygon which is
dual to $E$. Let the endpoints of $E$ be $A_1,A_2$ and two faces
$d(A_1),d(A_2)$ adjacent to $d(E)$ have no other vertical edges.  
Then the sum of widths of the faces $d(A_1),d(A_2)$ is at least $m$, so their
total area is at least $m^2/2$.
\end{prop}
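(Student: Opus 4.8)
The plan is to pull two numerical inequalities out of the hypothesis ``$p$ has multiplicity $m$'' by intersecting $C'$ with well-chosen lifted lines through $p$ and reading off leg weights after a tropical modification, and then to assemble them into the area bound by a one-line lattice-polygon estimate. Throughout, write $h=\mult(E)$ for the lattice length of the vertical dual edge $d(E)$ and $w_1,w_2$ for the $(1,0)$-widths of $d(A_1),d(A_2)$.

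First I would show $w_1+w_2\ge m$, essentially by re-running the argument of Proposition~\ref{prop_stableint} with a non-generic lift. Take a tropical horizontal line $L$ containing $E$ with vertex off $A_1,A_2$, but lift it to a line $l\subset(\KK^*)^2$ actually passing through $p$ --- possible since $\Val(p)\in E\subset L$. Because $d(A_1),d(A_2)$ have no vertical edge other than $d(E)$, every edge of $C$ leaving $A_1$ or $A_2$ besides $E$ is non-horizontal, so $E$ is a compact connected component of $L\cap C$. By Proposition~\ref{prop_stableint} the local stable intersection of $C$ and $L$ along $E$ equals $w_1+w_2$; by Proposition~\ref{th_stableintersection} it also equals $\sum_{x\in l\cap C',\,\Val(x)\in E}m(x)$, and this sum is at least the local intersection number $(C'\cdot l)_p$, which in turn is at least $m$ since $p$ has multiplicity $m$ on $C'$ and a line through a point of multiplicity $m$ always meets the curve there with multiplicity $\ge m$. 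Hence $w_1+w_2\ge m$.

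Next I would show $h\ge m$ by the same device applied to a vertical line: let $l'\subset(\KK^*)^2$ be a line through $p$ whose tropicalization is the vertical line through $\Val(p)$ (I may assume $\Val(p)$ is interior to $E$). This vertical line meets $C$ transversally in the single --- hence compact --- point $\Val(p)$, with stable intersection $\mult(E)\cdot 1=h$, so Proposition~\ref{th_stableintersection} gives $h=\sum_{x\in l'\cap C',\,\Val(x)=\Val(p)}m(x)\ge (C'\cdot l')_p\ge m$. Finally, the area estimate is purely combinatorial. Each $d(A_i)$ is a convex lattice polygon having $d(E)$ as an edge, and $d(A_1),d(A_2)$ lie on opposite sides of the supporting line of $d(E)$, so their interiors are disjoint; since $d(A_i)$ reaches horizontal distance $w_i$ from that line, it contains the triangle spanned by that farthest vertex and the two endpoints of $d(E)$, of area $\tfrac12 h w_i$. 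Summing, $\area(d(A_1))+\area(d(A_2))\ge\tfrac12 h(w_1+w_2)\ge\tfrac12\,m\cdot m=\tfrac{m^2}{2}$.

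The part needing the most care is the bookkeeping in the two applications of Proposition~\ref{th_stableintersection}: one has to arrange the lifts $l,l'$ so that they both pass through $p$ and tropicalize to the prescribed lines, and one has to be sure the connected component of the tropical intersection carrying $\Val(p)$ is compact --- which is precisely where the hypothesis that $d(A_1),d(A_2)$ carry no further vertical edges enters. The only non-tropical ingredient is the classical comparison of the local intersection multiplicity of a line with a curve against the multiplicity of a singular point.
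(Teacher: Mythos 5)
Your proof is correct and takes essentially the same route as the paper's: a line through $p$ whose tropicalization has a vertical edge through $\Val(p)$ forces the weight of $E$ (hence the lattice length of $d(E)$) to be at least $m$, a horizontal line through $p$ combined with Propositions~\ref{prop_stableint} and~\ref{th_stableintersection} gives $w_1+w_2\ge m$, and the two bounds combine into the area estimate. Your write-up is merely more explicit than the paper's about the compactness of the relevant intersection component and about the final triangle/lattice-area step.
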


\begin{proof} 
Suppose that $p$ is of multiplicity $m$ for $C'$. Let us take a line $D$
through $p$, such that $\Trop(D)$ contains inside
its vertical edge the point $\Val(p)$ . Clearly the local intersection $\Trop(C')\cap \Trop(D)$ is
one point, and the multiplicity of this point should be at least $m$.  That immediately
implies that the weight of $E$ is at least $m$. Hence the lattice
length of $d(E)$ is at least $m$.

Let us look at the dual picture in the Newton polygon. Two faces $d(A_1),d(A_2)$
adjacent to the vertical edge have the sum of width in the $(1,0)$
direction at least $m$ (by Proposition~\ref{prop_stableint}), $d(E)$ has length $m$, so the sum of the
areas of $d(A_1),d(A_2)$ is at least $m^2/2$.  
\end{proof}

\begin{remark}
Note that if the stable intersection of $\Trop(C)$ with the horizontal line is $m$, then we can uniquely determine the position of the valuation of the singular point, see Lemma~\ref{lem_positionsingular}.
\end{remark}

What to do if there is a usual horizontal line $L$, a part of $C$, through $\Val(p)$? We perform
the modification along this horizontal line $L$. If a part of the curve
goes to the minus infinity, this indicates that, along the corresponding initial degeneration, the equation
of \(D\) appears as a factor. If \(D\) is an actual component of \(C'\), then one
can divide the equation of \(C'\) by the equation of \(D\). That means that the Newton polygon of $C$
has two parallel vertical sides. The components of the modification which do not go to the minus infinity
do not contribute to the singularity.

However, it is possible that $d(A_1),d(A_2)$ have other vertical sides besides $d(E)$.
Let $\E$ be the stable intersection of $\Trop(C)$ and the
horizontal line; clearly $E\subset \E$.  Now, let us compute the sum of the areas of the faces $d(V)$ corresponding to vertices $V$ of $\Trop(C)$ on $\E$. It is possible that more
than two faces correspond to one singular point, if the edge with the singular point has an extension, see again Example~\ref{ex_singularexample}.

Suppose that a tropical curve has edges $A_1A_2,A_2A_3,\dots,A_{k-1}A_k$ and
$A_1,A_2,\dots,A_k$ are situated on a horizontal interval
$A_1A_k=\E$. Suppose that $p$, point of multiplicity $m$, is on the edge
$A_sA_{s+1}$. Making a modification along {\bf a line} containing $A_1A_k$ in
its horizontal ray we estimate only the common width of faces
corresponding to $A_1,A_2,\dots A_k$, which gives no good estimate for the
sum of areas of $d(A_i)$.

But we can make a modification along a quadric.

\begin{lemma}
In the above hypothesis, the sum of areas of all faces
$$d(A_1),d(A_2),\dots, d(A_k)$$ is at least $m^2/4$.
\end{lemma}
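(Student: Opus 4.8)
The plan is to combine the elementary weight bound on the edge through $p$ with the extra information that a modification along a conic through $p$ gives, and then to estimate the area of the union of the dual faces.

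First I would record the input from a line. Taking a line $\ell$ through $p$ whose tropical vertex lies in the relative interior of $A_sA_{s+1}$, the local intersection $(C'\cdot\ell)_p$ is at least $\mult_p(C')=m$, so by Theorem~\ref{th_stableintersection} the weight of $A_sA_{s+1}$, i.e.\ the lattice length of the vertical dual edge $d(A_sA_{s+1})$, is at least $m$. Now each $d(A_i)$ occupies the vertical strip bounded by the vertical dual edges of the $\E$-edges at $A_i$, so it contains a triangle having one of those edges as base; writing $W_i$ for the horizontal width of $d(A_i)$, and using that these dual edges have length $\ge1$ (and $\ge m$ for $d(A_sA_{s+1})$), this gives $\area(d(A_s))\ge\tfrac m2 W_s$, $\area(d(A_{s+1}))\ge\tfrac m2 W_{s+1}$, and $\area(d(A_i))\ge\tfrac12 W_i$ for every $i$.

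Second I would feed in the quadric. Split the chain at $\Val(p)$. A line through $p$ with tropical vertex at $\Val(p)$ has a horizontal ray running left and its other rays off $\E$, so its tropicalization overlaps $\E$ exactly along $[A_1,\Val(p)]$; modifying $C$ along it, the downward legs appear only under this segment, with total weight $W_1+\dots+W_s$ by the width--slope correspondence of Proposition~\ref{prop_stableint}. Since $\ell$ passes through the $m$-fold point $p$, whose valuation lies in this segment, Theorem~\ref{th_stableintersection} together with its non-compact refinement yields $W_1+\dots+W_s\ge m$; the mirror construction --- which genuinely requires a conic with an $xy$-type monomial, since no line emits a rightward horizontal ray --- yields $W_{s+1}+\dots+W_k\ge m$. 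If the local stable intersection at $\Val(p)$ equals exactly $m$, then Lemma~\ref{lem_positionsingular} moreover concentrates the whole weight-$m$ downward leg under $\Val(p)$.

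Assembling these into the claimed bound is the step I expect to be the main obstacle. Feeding the width bounds into the area inequalities gives $\sum_i\area(d(A_i))\ge\tfrac m2(W_s+W_{s+1})+\tfrac12\big((m-W_s)+(m-W_{s+1})\big)$, which is of order $m^2$ only when a flanking face is wide of order $m$, and is merely linear when all flanking faces are thin --- which is exactly why a line does not suffice. The clean way to finish is to show that $\bigcup_i d(A_i)$ contains a lattice triangle one of whose legs is the segment $d(A_sA_{s+1})$ (length $\ge m$) and whose opposite vertex lies at horizontal distance $\ge m/2$, contributing $m^2/4$, with the adjacent face $d(A_{s+1})$ contributing $\ge m/2$. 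The delicate point is to prove that the influence region really tapers off over horizontal distance $\ge m/2$ on one side of $d(A_sA_{s+1})$, rather than having its width $\ge m$ distributed among thin faces; one expects to extract this by applying the balancing and momentum conditions (Proposition~\ref{prop_moment1}) to the weight-$\ge m$ downward leg of the modified space curve $m_D(C)\subset\TT^3$, bounding how steeply the dual subdivision can descend away from $d(A_sA_{s+1})$.
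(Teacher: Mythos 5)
There is a genuine gap, and it sits exactly where you flagged it. Your first two steps (the weight of $d(A_sA_{s+1})$ is at least $m$; individual faces contain triangles/trapezoids over their vertical dual edges) are fine, and your diagnosis is correct: aggregate width inequalities such as $W_1+\dots+W_s\geq m$ and $W_{s+1}+\dots+W_k\geq m$ cannot yield a quadratic bound, since the width $\geq m$ could be spread over $\sim m$ faces each of height $1$ and area $O(1)$. But the ``delicate point'' you defer --- showing that the subdivision cannot taper off too quickly away from $d(A_sA_{s+1})$ --- is the entire content of the lemma, and appealing to balancing and the momentum condition for the weight-$m$ leg of $m_D(C)\subset\TT^3$ does not supply it: that leg only re-encodes the local intersection number you already used, and gives no control over how the widths $a_i,b_j$ are distributed along the chain.

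The missing idea is to use not one conic but a \emph{two-parameter family} of them. Take a conic of type $a_0+a_1x+a_2y+a_3xy$ whose two vertices lie on the horizontal line through $\E$, one in the interior of $A_{s-j}A_{s+1-j}$ and one in the interior of $A_{s+i}A_{s+1+i}$. The component of its intersection with $C$ containing $\Val(p)$ is the compact segment between the two vertices, and the local stable intersection along it equals the sum of the widths of the enclosed faces \emph{plus} the weights $c_i$, $d_j$ of the two edges of $C$ crossed by the non-horizontal rays of the conic at its vertices. Since $p$ has multiplicity $m$ and $\Val(p)$ lies in this component, Proposition~\ref{th_stableintersection} gives
\begin{equation*}
A_i+c_i+B_j+d_j\geq m\quad\text{for every pair }(i,j),
\end{equation*}
where $A_i,B_j$ are the accumulated widths and $c_i,d_j$ the lattice lengths of the vertical dual edges at combinatorial distance $i$, resp.\ $j$, from $d(A_sA_{s+1})$. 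Setting $A=\min_i(A_i+c_i)$ and $B=\min_j(B_j+d_j)$ yields $A+B\geq m$ together with the pointwise lower bounds $c_i\geq A-A_i$ and $d_j\geq B-B_j$ --- precisely the tapering control you were missing, coupling the partial widths to the heights of the intermediate vertical dual edges (a coupling entirely absent from your two one-sided inequalities). Writing each face area as a trapezoid $\tfrac12(A_{i+1}-A_i)(c_i+c_{i+1})$ and summing then gives $2S\geq A_1(m-A)+A^2+B_1(m-B)+B^2\geq m+m^2/2$, i.e.\ $S\geq m/2+m^2/4$. Without the full family of inequalities the claimed bound does not follow from what you have established.
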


\begin{proof}  Let $a_i=\omega_{(1,0)}(d(A_{s+i})),i\geq 1$ be the width of $i$-th face (i.e. $d(A_{s+i})$) 
on the right, $b_i=\omega_{(1,0)}(d(A_{s-i})),i\geq 0$ be
the width of  $i$-th face (i.e. $d(A_{s-i})$) on the left. Let $c_i$ be the lattice length of $i$-th
vertical edge on the right (i.e. $c_i=\omega_{(0,1)}d(A_{s+i}A_{s+i+1}),i\geq 1$), $d_i$ be the length of the $i$-th vertical
edge on the left (i.e. $d_i=\omega_{(0,1)}d(A_{s-i}A_{s-i+1}),i\geq 1$). Then, let $\sum\limits_{i=1}^k a_i = A_k, \sum\limits_{i=1}^k b_i=B_k$. With the same calculations as above, making the modification along a piece of a quadric with vertices on $A_{s-j}A_{s+1-j}$ and $A_{s+i}A_{s+1+i}$
we get $A_i+c_i+B_j+d_j\geq m$ for all pairs $i,j$. Denote
\[
A=\min_i(c_i+A_i),\qquad B=\min_j(d_j+B_j).
\], so $ A+B\geq m$.

Then, $c_i\geq A-A_i,d_j\geq B-B_j$. Sum $S$ of areas can be estimated as $$2S\geq (m+c_1)A_1+\sum
(A_{i+1}-A_i)(c_i+c_{i+1})+(m+d_1)B_1+\sum
(B_{i+1}-B_i)(d_i+d_{i+1})$$

$$2S\geq (m+A-A_1)A_1+\sum (A_{i+1}-A_i)(A-A_i+A-A_{i+1})+$$
$$(m+B-B_1)B_1+\sum (B_{i+1}-B_i)(B-B_i+B-B_{i+1})\geq$$
$$A_1(m-A)+A^2+B_1(m-B)+B^2\geq m^2/2.$$

So, $S\geq m^2/4$.

\end{proof}

\section{Intuitions and interpretations}
\label{sec_modmotivation}

\rightline{\it La science toujours progresse et jamais ne faillit,} 
\rightline{\it toujours se hausse et jamais ne d\'eg\'en\`ere,}
\rightline{\it  toujours d\'evoile et jamais n'occulte.}
\rightline{\it Anonyme.}

This section explains why tropical modification is a natural notion and gives several interpretations in different contexts.
A reader primarily interested in definitions, examples, and theorems may proceed directly to the previous sections and return here only for motivation and references.

Tropical modifications were introduced in the seminal paper
\cite{mikh1} as the main ingredient in the tropical
equivalence relation. Namely, two tropical varieties are {\it equivalent} (tropical counterpart of birational isomorphism)
if they are related by a chain of tropical modifications and reverse
operations. For the full definition of an abstract tropical
  variety, refer to \cite{mikhalkin2006tropical} and \cite{MR2404949}.

The underlying idea is as follows. Recall that a tropical variety
$V$ can be decomposed into
a disjoint union of a compact part $V_c$ and a non-compact part
$V_\infty$, and $V=V_c\cup V_\infty$. Moreover, $V$ retracts on $V_c$. Then, the set
\(V_\infty\) consists of ``tree-like'' unions of pieces of affine hyperplanes.
We call these parts {\it legs} in the one-dimensional
case and {\it leaves} in general situation. For tropical curves, $V_\infty$ is a union of half-lines. For example, for a tropical elliptic
curve (see Figure~\ref{elliptic}, left side) the set $V_c$ is the ellipse, and $V_\infty$ is the set of trees growing on the ellipse. 

\begin{figure}[h]
\begin{center}
\begin{tikzpicture}[xscale=0.5, yscale=0.2]

\newcommand{\tree}[3]{%
\begin{scope}[shift={#1},rotate=#2,scale=#3]
\draw (0,0)--++(0,5);
\draw (0,1)--++(20:2);
\draw (0,1)--++(20:1)--++(40:1);
\draw (0,2)--++(120:3);
\draw (0,2)--++(120:1)--++(150:1);

\draw (0,2.5)--++(15:2);
\draw (0,2.5)--++(15:1)--++(25:0.5);
\draw (0,3.5)--++(80:2);
\draw (0,3.5)--++(80:1)--++(70:1);
\end{scope}
}%

\newcommand{\bigtree}[3]{%
\begin{scope}[shift={#1},rotate=#2,scale=#3]

\tree{(0,0)}{0}{1}

\tree{(0,0.5)}{80}{0.5}

\tree{(0.8,2.7)}{-20}{0.3}

\tree{(-1.1,3.85)}{10}{0.5}

\tree{(0,0.5)}{55}{0.2}

\tree{(0,1.5)}{60}{0.3}

\tree{(0.2,4.2)}{-40}{0.3}

\tree{(0.5,1.2)}{-90}{0.5}

\tree{(-0.9,3.5)}{85}{0.4}

\tree{(0.5,2.7)}{-130}{0.2}
\end{scope}
}

\begin{scope}[xshift=340]{
\bigtree{(0,0)}{0}{1}
\bigtree{(0,0)}{180}{1}}
\end{scope}

\foreach \p in {0,18,...,360}
{\bigtree{( 6*cos \p, 7*sin \p )}{\p-90}{0.3}} 

\draw plot [domain=-180:180] ( 6*cos \x, 7*sin \x );

\end{tikzpicture}
\end{center}
\caption{On the left side we see a tropical elliptic
  curve $V$ which is a part of the analytification of an elliptic
  curve. The ellipse is $V_c$ and the union of tree-like pieces is
  $V_\infty$. On the right side we see a tropical rational curve
  $V$, which is equal to $V_\infty$.  Each point $x$ of $V$ can serve as $V_c$, because $V$
  contracts onto any of its point $x\in V$.}
\label{elliptic}
\end{figure}

\begin{remark}On a tropical rational\footnote{Rational
    tropical varieties are the contractible ones, as a topological space. They are not well
    studied even in small dimensions. For example, there exist algebraic three-dimensional cubic
    hypersurfaces which are not rational. It is not known whether we can see
  this tropically, because all tropical cubic surfaces are contractible.}
variety $V$, each point may be chosen as $V_c$, see Figure~\ref{elliptic} right side. 
\end{remark}

%

Consider the tropical limit $V$ of algebraic varieties $W_{i}\subset
(\CC^*)^n$, i.e. $V=\lim_{i\to\infty}\Log_{t_i}(W_{i})$, where we
apply the map $\Log_{t_i}:\CC^*\to \RR, x\to \log_{t_i}|x|$
coordinate-wise and $\{t_i\}_{i=1}^\infty$ is a sequence of positive numbers, tending to $+\infty$. In this case the set $V_\infty$ encodes the topological
way of how $W_i$ approach some compactification of $(\CC^*)^n$. For the moment, the particular choice of
the compactification does not matter\footnote{For a fixed
  compactification, see the notion of sedentarity in
  \cite{shaw2013tropical} and \cite{BIMS}, p.~44. }.  

Besides, for $i$ big enough, the Bergman fan
$B(W_i):=\lim_{t\to\infty}\Log_t(W_i)$ of $W_i$ 
is equal to $\lim_{t\to\infty}\frac{1}{t}V$. The latter limit is obtained by
contracting the compact part $V_c$ of $V$, so the Bergman fan can be
restored by $V_\infty$. Note, that $V$ came here with a particular
immersion to $\RR^n$. 

\begin{example}
If curves $W_i, i=1,2,\dots$ in $(\CC^*)^2$ all have branches with
asymptotic $(s^k,s^l)$ with a local parameter $s\to \infty$, then the
tropical limit $V$ of this family lies in
$\RR^2$, and $V$ has the infinite leg (half-line) in the lattice direction
$(k,l)$. 
\end{example}

Let us suppose that we have an algebraic map $f:(\CC^*)^n \to
(\CC^*)^m$, and $f$ is in general position with
respect to the family $\{W_i\}$, i.e. for each $i$ big enough, the
image $f(W_i)$ is birationally equivalent to $W_i$.  Let $V'$ be the
tropical limit  of the family $\{f(W_i)\}$. One can prove that $V'_\infty$ differs from
$V_\infty$ by adding new half-planes and contracting other
half-planes. These half-planes
grow along the tropicalization of zeros and poles of $f$ on
$W_i$ (exactly as in Definition~\ref{def_modification}). This consideration suggests the
ideas of {\it modification} and, subsequently, {\it tropical equivalence}. The name ``modification'' was borrowed from complex analysis, and tropical modification is sometimes called ``tropical blow-up''.  

In Section \ref{category} we see how the notion of modifications
allows us to define the {\it category of tropical curves}. This category keeps
track of birational isomorphism in the category of complex algebraic
curves.  See also \S\ref{weil}, where
making modifications for curves simplifies a proof to some extent.

Alternatively, tropical geometry can be thought of as the study of skeletons of analytifications of algebraic varieties; see Figure~\ref{elliptic}, where the analytification of an elliptic curve is shown on the left and the analytification of $\PP^1$ on the right. The analytification $X^{an}$ of a variety $X$ is the
set of all seminorms on functions on $X$. Each point $x\in X$ defines
such a seminorm by measuring the order of vanishing of a function at
$x$.
In Figure~\ref{elliptic}, these points are represented by the ends of leaves; equivalently, these valuations represent norms of ``zero'' radius. The analytification of an elliptic curve is the injective limit of all modifications of its tropicalization: we add a leg at every point of a circle, then add a leg at every point of the new space, and so on.

For the sake of shortness, we refer the reader to a nice introduction
in Berkovich spaces, with a bit of pictures
\cite{baker2008introduction},\cite{2010arXiv1010.2235T} and to \cite{2011arXiv1104.0320B} to see
how it has been applied to tropical geometry (also, see on the page 7 in \cite{2011arXiv1104.0320B},
using of $\log$ reminds hyperbolic approach).

 We can obtain a tropical
variety $V$ as the non-Archimedean amoeba of an algebraic variety $W$
over a non-Archimedean field.  This approach (see section \S\ref{berkovich}) finally
suggests the same idea of equivalence up to modification, because the
analytification $W^{an}$ is the injective limit of all ``affine'' tropical modifications (i.e. along only principal
divisors) of $V$ (see \cite{zbMATH05608662}). Berkovich proved that
$W^{an}$ retracts on a finite polyhedral complex, so
$V_c$ is a deformation retract of $W^{an}$. Even better, the metric on $W^{an}$
agrees with the metric on $V$ for the case of curves\footnote{That
  should be true for varieties of any dimension, modulo integer affine
transformations, but no proof has appeared yet. For the skeletons in higher dimensions see \cite{Gubler:2014eu, Gubler:2015pd}.}
(\cite{2011arXiv1104.0320B}). For elliptic curves $V_c$ will be a circle in both tropical
and analytical cases, and its length is prescribed by the $j$-invariant of
the considered curve (\cite{2014arXiv1409.7430A}).

This connection between tropical geometry and analytic geometry leads to the
questions of {\it lifting} or {\it realizability}, i.e. what could be
the intersection of two varieties $X,Y$ if we know the intersection of their tropicalizations?
If their tropicalizations $\Trop(X),\Trop(Y)$ intersect
transversally, the answer is relatively simple, see \cite{2010arXiv1007.1314O}. If
the intersection of $\Trop(X),\Trop(Y)$ is non-transverse, then we can lift {\it the stable
intersection} of these tropical varieties, see
\cite{2011arXiv1109.5733O},\cite{MR2900439}.  
 
This raised the following question:  to what extent the only condition for a
divisor on a curve to be realizable as an intersection is to be
rationally equivalent to the stable intersection (cf. \cite{Morrison:2014mz}, Conjecture 3.4)?

Tropical modification (as a {\it method}) helps dealing with such questions. It is known that being rationally equivalent to the stable intersection is not enough.  We consider other existing
obstructions (in fact, equivalent to Vieta theorem) for what can happen in non-transverse
tropical intersections, and prove, for that occasion, the tropical
Weil reciprocity law by using the tropical momentum Lemma~\ref{lem_moment1}.

Consequently, modifications are used in tropical
intersection theory (\cite{Shaw:2015qd,shaw}), to
define the intersection product. Nevertheless, one must use modifications along non-Cartier divisors (Examples 1.1.37,
3.4.18 in \cite{shaw}, for moduli space of five points on rational curve) and even along
non-realizable subvarieties -- for a proof that they are
non-realizable as tropical limits.

As we stated before, one should think that a tropical modification
along $X$ reveals asymptotical behavior of objects near $X$. We can find an analogy
in non-standard analysis: the tropical line is the hyperreal line, the modification at a point is an
approaching this point with {\it an infinitesimal telescope}, see
Figure \ref{telescope} and Section \ref{berkovich}. In order to define tropical Hopf manifolds one should also use the modifications to study certain germs \cite{ruggiero2015tropical}.

Given a surface with hyperbolic structure, we can make a puncture at $x$. This
changes the hyperbolic structure and $x$ goes, in a sense, to
``infinity''. A tropical curve can be obtained as a degeneration of
hyperbolic structures, and making a puncture at $x$ results as the
modification at the limit of $x$, see Section \S\ref{sec_hyperbolic}. 

A modification can be described as a graph of a function, if we
use the convention about multivalued addition, brought in
tropical geometry by Oleg Viro (\cite{viro}), see Section~\ref{sec_moddefinition}.

The other applications of tropical modification as a {\it method} are following.
Passing to tropical limit squashes a variety, and some local features
become invisible. In order to reveal them back we can do a
modification (whence also this metaphor ``look in an infinitesimal
  microscope'').
For example, modifications allow us to restore transversality between
lines if we have lost it
during tropicalization (\S\ref{lines}), then it allows us to see (-1)-curves on del
Pezzo surfaces (\cite{2014arXiv1402.5651R}).  Methods of lifting
non-transverse intersections leads us to use modifications in questions about singularities:
inflection points -- \cite{brugalle}, singular points --
\cite{markwig}. As an example (Section~\ref{sec_multiplicitymodif}), we use modification in the
study of singular points of order $m$ (but obtain weaker results than
in \cite{kalinin}).

\subsection{Hyperbolic approach and moduli spaces}
\label{sec_hyperbolic}

Consider a tropical curve $C$ given as the tropical limit of complex curves
$C_i$. From the point of view of hyperbolic geometry, a modification of $C$ at a
point $x\in C$ amounts to creating a puncture \(x_i\) on \(C_i\), with the condition that \(x_i\to x\). To explain this we need to know how to directly construct
tropical curves via limits of Riemann surfaces with hyperbolic
structure on them, without any immersions\footnote{Usually people
  consider curves $C_i$ in toric variety $X$ and then they consider degeneration
  of complex structures on $X$.}.

For details on how tropical geometry can be built on the basis of hyperbolic geometry, see \cite{lionel}. Here we briefly sketch the construction.

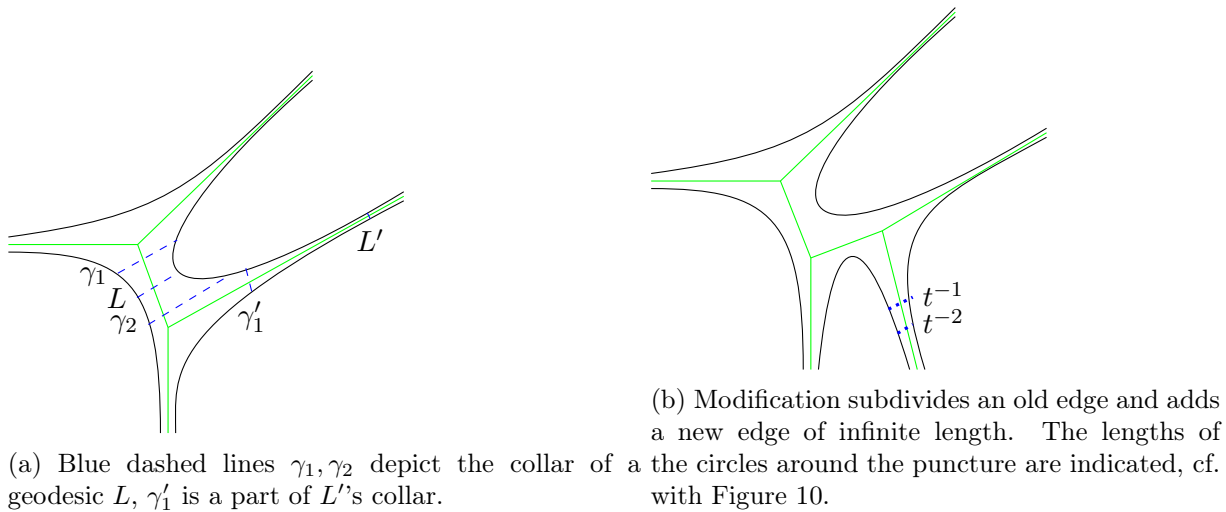
\begin{figure}[htbp]

\begin{center}
\begin{subfigure}[b]{0.5\textwidth}
\begin{tikzpicture}[scale=0.4]

\begin{scope}[xshift=-2cm]
\draw (0,0) .. controls (0,5) and (-1,6).. (-5,6);
\draw (-5,6.5) .. controls (0,7.2) and (1,8).. (5,12);
\draw (5,11.7) .. controls (-2,5.5) and (-1,2.5).. (8,8);
\draw (8,7.7) .. controls (0,3.5) and (0.5,2).. (0.5,0);

\newcommand{\p}{-0.75}

\draw[green] (-5,6.25)--(\p,6.25)--(0.25,3.5)--(0.25,0);
\draw[green] (\p,6.25)--(5,11.85);
\draw[green] (0.25,3.5)--(8,7.85);
\draw[dashed,blue] (-0.77,4.49)--(0.55,5.3);
\draw[dashed,blue] (-1.4,5.3)--(0.55,6.4);
\draw[dashed,blue] (-0.4,3.6)--(2.3,5.25);
\draw (-0.77,4.49) node[left]{$L$};
\draw (-1.4,5.2) node[left] {$\gamma_1$};
\draw (-0.4,3.6) node[left]{$\gamma_2$};

\draw[dashed, blue] (6.9,7.1)--(6.8,7.3);
\draw[dashed, blue] (3,4.7)--(2.8,5.5);
\draw(6.9,7.1) node[below]{$L'$};
\draw(3,4.7) node[below]{$\gamma_1'$};
\end{scope}
\end{tikzpicture}
\caption{Blue dashed lines $\gamma_1,\gamma_2$ depict the collar of
  a geodesic $L$, $\gamma_1'$ is a part of $L'$'s collar.}
\end{subfigure}
\begin{subfigure}[b]{0.45\textwidth}
\begin{tikzpicture}[scale=0.4]
\draw (0,0) .. controls (0,5) and (-1,6).. (-5,6);
\draw (-5,6.5) .. controls (0,7.2) and (1,8).. (5,12);
\draw (5,11.7) .. controls (-2,5.5) and (-1,2.5).. (8,8);

\draw (0.5,0) .. controls (1,5) and (2,5).. (3.5,0);
\draw (8,7.7) .. controls (4,5.5) and (2.5,5).. (4,0);

\newcommand{\p}{-0.75}

\draw[green] (-5,6.25)--(\p,6.25)--(0.25,3.7)--(0.25,0);
\draw[green] (\p,6.25)--(5,11.85);
\draw[green] (0.25,3.7)--(2.6,4.6)--(8,7.85);
\draw[green](2.6,4.6)--(3.75,0);

\draw[dotted, blue, very thick] (2.8,2)--(3.6,2.4);
\draw (3.6,2.5) node[right]{$t^{-1}$};
\draw[dotted, very thick, blue] (3.1,1.2)--(3.6,1.5);
\draw (3.6,1.6) node[right]{$t^{-2}$};

\end{tikzpicture}
\caption{Modification subdivides an old edge and adds a new edge of infinite length. The lengths of the circles around the puncture are indicated, cf. with Figure~\ref{telescope}. }
\end{subfigure}
\end{center}
\caption{We draw the tropical limits of Riemann surfaces, and a surface close to the limit. Modification adds a puncture to each curve in the family and a leg to the tropical curve.}
\label{collar}
\end{figure}

The approach proposed by L. Lang uses the collar lemma \cite{Buser:1978kx}. This lemma says that any closed geodesic of length $l$ has a collar of width $\log(\coth(l/4))$ and, more importantly, that collars of distinct sufficiently short closed geodesics do not intersect; see Figure~\ref{collar}. It is also important that shorter geodesics have wider collars, and, intuitively, a puncture has a collar of infinite width.

Thus, given a family of curves $C_i$ (of the same genus), we consider a fixed 
pair-of-pants decomposition by geodesics $L_i$. The tropical curve is constructed as follows: its vertices are in one-to-one correspondence with the pair-of-pants, each shared boundary component between two pairs-of-pants correspond to an edge of the tropical curve, and the collar lemma furnishes us with the length of the edges of the tropical curve as the
logarithms (with base $t$, and $t\to\infty$ as the hyperbolic
structure degenerates) of widths of the collars of $L_i$'s. Compare this approach 
with \cite{Bowditch:1988rm}.

What will happen if we make a puncture? A puncture is the limit of
small geodesic circles. Cutting out a disk with radius $t^{-n}$ adds a leaf of finite
length $n$, as it is seen from the above description. Therefore, cutting
out a point results in adding an infinite edge, i.e. a modification.

That explains why a permanent using of graphs for moduli
space problems is actually useful (\cite{kontsevich1992}, cf. \cite{2007arXiv0709.3953K}). Tropical curves describe the part of
boundary of a moduli space, and modification corresponds to marking
a point (read \cite{do2008intersection} to see the hyperbolic view on
moduli space problems), which are punctures from the hyperbolic point of view (see applications to moduli space of points \cite{MR2404949}).
Tropical differential forms are also defined in this manner while taking a limit of
hyperbolic structure \cite{mikhalkin2006tropical}.

\subsection{Non-standard analysis}
\label{berkovich}
Non-standard analysis appeared as an attempt to formalize the notion of
``infinitesimally small'' variables (see \S4 of \cite{MR3026767} for a nice
and short exposition).

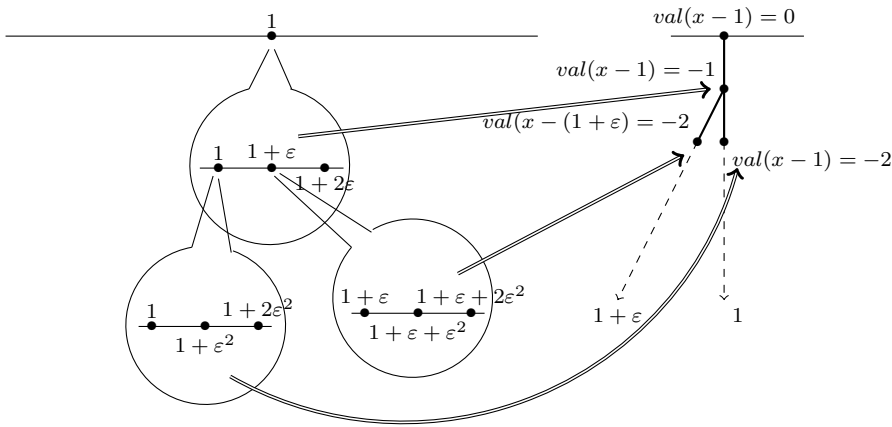
\begin{figure}[h]
\begin{tikzpicture}[scale=0.35]
{\scriptsize
\draw (-10,10)--(10,10);
\node at (0,10) {$\bullet$};
\draw (0,10) node[above]{$1$};

\draw (-0.85,8) arc (105:435:3);
\draw (-0.85,8)--(-0.1,9.5);
\draw (0.75,8)--(0.1,9.5);

\draw (-2.7,5)--(2.7,5);
\node at (2,5) {$\bullet$};
\node at (0,5) {$\bullet$};
\node at (-2,5) {$\bullet$};

\draw (-2,5) node [above]{$1$};
\draw (0,5) node [above]{$1+\varepsilon$};
\draw (2,5) node [below]{$1+2\varepsilon$};

\draw (-3,2) arc (100:430:3);
\draw(-3,2)--(-2.2,4.7);
\draw (-1.52,1.8)--(-2,4.6);
\draw (-5,-1)--(0,-1);
\node at (-4.5,-1) {$\bullet$};
\draw (-4.5,-1) node [above]{$1$};
\node at (-2.5,-1) {$\bullet$};
\draw (-2.5,-1) node [below]{$1+\varepsilon^2$};
\node at (-0.5,-1) {$\bullet$};
\draw (-0.5,-1) node [above]{$1+2\varepsilon^2$};

\draw (3,2) arc (140:480:3);
\draw (3,2)--(0.1,4.7);
\draw(3.75,2.57)--(0.3,4.8);
\draw (3,-0.5)--(8,-0.5);
\node at (3.5,-0.5) {$\bullet$};
\draw (3.5,-0.5) node [above]{$1+\varepsilon$};
\node at (5.5,-0.5) {$\bullet$};
\draw (5.5,-0.5) node [below]{$1+\varepsilon +\varepsilon^2$};
\node at (7.5,-0.5) {$\bullet$};
\draw (7.5,-0.5) node [above]{$1+\varepsilon +2\varepsilon^2$};

\draw (15,10)--(20,10);
\node at (17,10) {$\bullet$};
\draw (17,10) node[above]{$val(x-1)=0$};

\draw[dashed,->] (17,6)--(17,0);
\draw (17,0) node[below right]{$1$};
\draw[thick](17,10)--(17,6);
\node at (17,8) {$\bullet$}; 
\draw (17,8) node[above left]{$val(x-1)=-1$};
\node at (17,6) {$\bullet$}; 
\draw (17,6) node[below right]{$val(x-1)=-2$};

\draw[thick] (17,8)--(16,6);
\node at (16,6) {$\bullet$}; 

\draw (16,6) node[above left]{$val(x-(1+\varepsilon)=-2$};
\draw[dashed,->] (16,6)--(13,0);
\draw (13,0) node[below]{$1+\varepsilon$};

\draw[thin, double,<-] (16.5,8) -- (1,6.2);
\draw[thin, double,<-] (15.5,5.5) -- (7,1);
\draw[thin, double,<-] (17.5,5) arc (345:240:13);
}
\end{tikzpicture}
\caption{Similarity in the pictures while using an infinitesimal microscope (left)
and the tropical modification at points 1 and $1+\varepsilon$ (right).}
\label{telescope}
\end{figure}

There is a way to understand tropical geometry via nonstandard analysis
(cf. \S1.4 \cite{oberwolfach}). Figure~\ref{telescope} shows that
tropical modifications are similar to ``infinitesimal microscope'' for  the hyperreal line in
the terminology of \cite{keisler1976foundations}, and this
interpretation in computational sense is the same as for Berkovich
spaces: doing modification at the point $x=1$ on a curve is adding a
leg to the tropical curve, which ranges points according their
asymptotical distance to $x=1$, i.e. $\val(x-1)$, these pictures are also
similar to the hyperbolic ones; see Figure~\ref{collar}. Dotted lines represent directions to the endpoints of the analytifications, and we have a similar type of branching at all points in Figure~\ref{elliptic}.

It is worth to note that there are still no applications of this point
of view, neither in tropical geometry, nor in non-standard
analysis. However, Berkovich spaces can be understood as a modern version of
non-standard analysis, and tropical modification has
applications there.

\section{Questions}
An important feature of tropical geometry is that it builds a bridge between geometric objects, such as hyperbolic surfaces, and discrete objects, such as $p$-adic valuations and non-Archimedean analytic spaces. Since tropical modifications dwell in both realms, we expect them to be useful in future applications.

\begin{problem}
Explore tropical modifications over the phase-tropical numbers \cite{viro}.
\end{problem}
For example, in \cite{el2018constructing} tropical geometry was used to construct a system of polynomial equations with only  5 monomials and 6 positive solutions, and the authors used non-transverse intersections. Looking at questions of such type with a modification tool over phase tropical numbers looks promising.

\begin{problem}
Study the tangency between tropical curves and tropical surfaces in $\TT^3$. 
\end{problem}
For example, one can ask how many surfaces of a given degree are tangent to a given collection of lines (or twisted cubics) in $\TT^3$.


\end{document}